\documentclass[11pt,reqno]{amsart}

\usepackage[margin=1in]{geometry}
\usepackage{amsmath,amssymb,mathtools}
\usepackage{aliascnt}
\usepackage{microtype}
\usepackage[hidelinks]{hyperref}
\usepackage[nameinlink,noabbrev]{cleveref}

\numberwithin{equation}{section}

\newtheorem{theorem}{Theorem}[section]
\newaliascnt{proposition}{theorem}
\newtheorem{proposition}[proposition]{Proposition}
\aliascntresetthe{proposition}
\newaliascnt{lemma}{theorem}
\newtheorem{lemma}[lemma]{Lemma}
\aliascntresetthe{lemma}
\newaliascnt{corollary}{theorem}
\newtheorem{corollary}[corollary]{Corollary}
\aliascntresetthe{corollary}

\newcommand{\R}{\mathbb R}
\newcommand{\E}{\mathbb E}
\renewcommand{\P}{\mathbb P}
\newcommand{\Var}{\operatorname{Var}}

\newcommand{\supp}{\operatorname{supp}}
\newcommand{\sech}{\operatorname{sech}}
\newcommand{\dd}{\,\mathrm d}
\newcommand{\e}{\mathrm e}
\newcommand{\1}{\mathbf 1}

\title[Full RSB in the SK model]{Full replica symmetry breaking in the Sherrington--Kirkpatrick model}
\author{Patrick Lopatto}
\date{\today}

\begin{document}

\begin{abstract}
We prove that, at zero external field and for every inverse temperature $\beta>1$, the Parisi measure of the Sherrington--Kirkpatrick model is supported on a single interval $[0,q_\beta]$, has a smooth density on $[0,q_\beta)$, and has a single atom at $q_\beta$.
The proof first establishes that zero is an accumulation point of the support. By direct analysis of the Gaussian Cole--Hopf solutions associated with finitely supported measures, followed by an approximation argument, we show that a positive gap separating zero from the rest of the support contradicts the variational characterization of the Parisi measure established by Jagannath and Tobasco (2017). We then derive a monotonicity constraint on a quantity appearing in the self-consistency conditions arising from the Parisi variational criteria, which is incompatible with the existence of any gap in the support. Finally, log-concavity of a transformed probability density associated with the optimal diffusion implies that the maximal support point carries an atom, while the regularity result of Auffinger and Chen (2015) excludes the possibility of other singular components.

\end{abstract}

\maketitle

\section{Introduction}

We consider the Sherrington--Kirkpatrick spin glass model at inverse temperature $\beta >0$, which is defined by the Hamiltonian
\begin{equation}\label{eq:SK-Hamiltonian}
 H_N^\beta(\sigma)
 =
 \frac{\beta}{\sqrt{2N}}
 \sum_{i,j=1}^N g_{ij}\sigma_i\sigma_j,
 \qquad
 \sigma\in\{-1,1\}^N, 
\end{equation}
where $(g_{ij})_{1\le i,j \le N}$ are independent standard Gaussian random variables. The limiting free energy $F(\beta)$ of this model was characterized by Talagrand \cite{Talagrand2006}. His result can be formulated as follows (see \cite[Section 1]{AuffingerChenUnique}).
For a Borel probability measure $\mu$ on $[0,1]$, let $u_\mu$ be the
solution of the Parisi equation
\begin{equation}\label{eq:parisi-pde}
 \partial_su_\mu(s,x)
 +\frac{\beta^2}{2}\left(
 u_{\mu,xx}(s,x)+\mu([0,s])u_{\mu,x}(s,x)^2
 \right)=0,
 \qquad
 u_\mu(1,x)=\log\cosh x.
\end{equation}
Define the Parisi functional
\begin{equation}\label{eq:parisi-functional}
 \mathcal P_\beta(\mu)
 =\log2+u_\mu(0,0)
 -\frac{\beta^2}{2}\int_0^1s\mu([0,s])\dd s.
\end{equation}
Then
\[
F(\beta)=\inf_{\mu\in\Pr([0,1])} \mathcal P_\beta (\mu),
\]
where $\Pr([0,1])$ is the set of Borel probability measures on $[0,1]$.
Additionally, the functional
\eqref{eq:parisi-functional} has a unique minimizer by
\cite[Corollary 1]{AuffingerChenUnique}; we denote it by $\mu_\beta$.

We are interested in determining the structure of $\mu_\beta$. For
$\beta\le 1$, it is known that $\mu_\beta$ is trivial,
$\mu_\beta=\delta_0$
\cite{AizenmanLebowitzRuelle1987,Talagrand2006}. For $\beta>1$,
predictions from physics indicate that $\mu_\beta$ should contain an
interval in its support \cite{Parisi1979,Parisi1980}. It was shown in \cite{Toninelli2002}
that the replica-symmetric ansatz is not minimizing for any $\beta>1$,
and hence that $\mu_\beta$ is nontrivial throughout the low-temperature
phase. Further, it is known that
$0\in\operatorname{supp}\mu_\beta$ at every temperature and that
$\mu_\beta$ has a smooth density on every open interval contained in its
support \cite{AuffingerChenProperties}. For the SK model, the criterion of
\cite{AuffingerChenProperties} excludes one-step replica symmetry
breaking at sufficiently low temperature. The same work shows that, for
$1<\beta\le 3/2$, the maximal point of the support is an atom.

Recently, Zhou proved that, for $\beta>1$ sufficiently close to $1$,
there exists $q_\beta>0$ such that
\[
    \operatorname{supp}\mu_\beta=[0,q_\beta];
\]
moreover, $\mu_\beta$ has a smooth density on $[0,q_\beta)$ and an atom
at $q_\beta$ \cite{ZhouFRSB}. Before the present work, this complete description of the Parisi measure
was not known outside the near-critical regime. At zero temperature, the corresponding Parisi measure has infinite
support \cite{AuffingerChenZeng}. As a consequence, for every $k\ge 1$, there exists
$\beta_k<\infty$ such that $\mu_\beta$ has at least $k+1$ distinct
points in its support whenever $\beta>\beta_k$
\cite{AuffingerChenZeng}. For related results on the structure and phase transitions of Parisi measures in more general mixed and pure $p$-spin
models, see
\cite{AuffingerChenProperties,JagannathTobasco2017,zhou2024gardner}.

Our contribution is the following theorem, which establishes full
replica symmetry breaking throughout the low-temperature phase.

\begin{theorem}\label{thm:main}
For every $\beta>1$, there exist
\[
 q_\beta\in(0,1),\qquad c_\beta\in(0,1),
\]
and a nonnegative function
\[
 \rho_\beta\in C^\infty([0,q_\beta))
\]
such that
\begin{equation}\label{eq:main-decomposition}
 \mu_\beta(\dd s)
 =\rho_\beta(s)\dd s+c_\beta\delta_{q_\beta}(\dd s),
\end{equation}
where the density term is understood to be zero on
$[q_\beta,1]$. Moreover,
\begin{equation}\label{eq:main-conclusion}
 \supp\mu_\beta=[0,q_\beta].
\end{equation}
\end{theorem}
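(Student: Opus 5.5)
The proof will follow the three-stage strategy outlined in the abstract. It rests on the first-order optimality conditions for $\mu_\beta$ from Jagannath--Tobasco and Auffinger--Chen. Let $X_s$ be the optimal diffusion
\[
\dd X_s=\beta^2\mu_\beta([0,s])\,u_{\mu_\beta,x}(s,X_s)\dd s+\beta\dd W_s,\qquad X_0=0 .
\]
Define
\[
\Gamma(s)=\E\, u_{\mu_\beta,x}(s,X_s)^2,\qquad \phi(q)=\beta^2\int_q^1\bigl(\Gamma(s)-s\bigr)\dd s .
\]
The variational criterion says $\phi\ge 0$ on $[0,1]$, with $\phi=0$ on $\supp\mu_\beta$. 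Consequently, at every support point $q$ we have $\Gamma(q)=q$. On any open interval inside the support, the second-order identity $\E\,u_{xx}(s,X_s)^2=\beta^{-2}$ also holds.

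\textbf{Step 1: $0$ is an accumulation point of the support.} We already know $0\in\supp\mu_\beta$. Suppose instead that $\supp\mu_\beta\cap(0,\varepsilon)=\emptyset$ for some $\varepsilon>0$. Then $\mu_\beta([0,s])=m:=\mu_\beta(\{0\})$ is constant on $[0,\varepsilon)$. On $[0,\varepsilon]$ the Parisi PDE is therefore linearized by the Cole--Hopf transform $\e^{m u}$, so $u(0,\cdot)$ is a Gaussian convolution of $\e^{m u(\varepsilon,\cdot)}$.

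Since $u_x(s,\cdot)$ is odd at zero field, a Taylor expansion of $\phi$ at $q=0$ gives
\[
\phi''(0)=-\beta^2\bigl(\Gamma'(0)-1\bigr),\qquad \Gamma'(0)=\beta^2\,\E\,u_{xx}(0,0)^2 .
\]
Two conditions must then hold simultaneously: $\phi\ge0$ near $0$, and $\phi(0)=\phi(q_*)=0$ at the next support point $q_*\ge\varepsilon$. These translate into explicit inequalities between Gaussian Cole--Hopf integrals.

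I would first prove the contradiction for finitely supported $\mu$ with a gap at zero, by direct computation. The relevant quantities are the first two moments of $u_x$ under the Cole--Hopf tilt, and the aim is to show that one can perturb by moving a small mass into $(0,\varepsilon)$ and strictly lower $\mathcal P_\beta$. The key input is $\beta>1$, which forces $\beta^2\,\E\,u_{xx}^2>1$ near zero.

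I would then pass to general $\mu_\beta$ by approximating with atomic measures, using Lipschitz continuity of $\mu\mapsto u_\mu$ in $L^1$ distance. For this step to work, the strict inequality must come with a quantitative margin that is uniform in the approximation. I expect this step to be the main obstacle: controlling the sign of the Cole--Hopf expressions uniformly is delicate.

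\textbf{Step 2: no interior gaps.} Suppose $(a,b)$ is a maximal gap with $a,b\in\supp\mu_\beta$. By Step 1, $a>0$, and $a$ is a limit of support points from the left, or lies at the edge of a support interval. Consider the quantity
\[
\Psi(s)=\beta^2\,\E\,u_{xx}(s,X_s)^2 .
\]
Itô's formula shows that $\Psi$ is nondecreasing in $s$, since its drift is a sum of squares involving $u_{xxx}$ and $\mu([0,s])$. On accumulation regions of the support, the self-consistency conditions force $\Psi=1$ at $a$. However, $\phi(a)=\phi(b)=0$ together with $\phi\ge0$ on $(a,b)$ requires $\Gamma-s$ to change sign from negative to positive across the gap. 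Since $\Gamma'=\Psi$, this demands $\Psi<1$ somewhere after $a$ and $\Psi>1$ later, which contradicts monotonicity once $\Psi(a)\ge1$. Hence $\supp\mu_\beta=[0,q_\beta]$.

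\textbf{Step 3: structure at the top.} I would show that $X_{q_\beta}$ transformed by $x\mapsto u_x$ has a log-concave density; log-concavity is preserved by Gaussian smoothing and by the $\log\cosh$ terminal condition. Suppose there were no atom at $q_\beta$. Then $\mu([0,s])\to1$ continuously, and the identity $\Psi(q_\beta)=1$ would combine with log-concavity to give a strict inequality $\Psi(q_\beta)<1$ by a Brascamp--Lieb/Cramér--Rao type comparison, a contradiction. Thus $c_\beta=\mu_\beta(\{q_\beta\})>0$. Finally, Auffinger--Chen regularity gives a smooth density on the open interval $(0,q_\beta)$; Step 1 together with $\mu_\beta(\{0\})=0$ (the same Cole--Hopf argument excludes an atom at $0$) extends smoothness to $[0,q_\beta)$. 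We have $c_\beta<1$ because $\mu_\beta\neq\delta_{q_\beta}$ by Step 1, and $q_\beta<1$ by the standard bound $q_\beta\le\E\tanh^2<1$.
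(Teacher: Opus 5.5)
Your Step 2 rests on a false claim: $\Psi(s)=\beta^2\E\,u_{xx}(s,X_s)^2$ is not nondecreasing. It\^o's formula gives
\[
\Psi'(s)=\beta^4\,\E\bigl[u_{xxx}(s,X_s)^2-2\alpha(s)\,u_{xx}(s,X_s)^3\bigr],\qquad \alpha(s)=\mu_\beta([0,s]),
\]
and since $u_{xx}>0$, the second term is strictly negative, so the drift is not a sum of squares. For instance, on a gap $(0,q_\star)$ following an atom of mass $m$ at zero, evenness gives $u_{xxx}(0,0)=0$, and hence $\Psi'(0)=-2m\beta^4u_{xx}(0,0)^3<0$. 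This sign-indefiniteness is the whole difficulty. The paper instead proves a one-crossing property on every gap: $\Gamma''(s)=0\Rightarrow\Gamma'''(s)>0$. Hence $h'=\Gamma'-1$ first decreases and then increases. Combined with the stability inequality $\beta^2\E\,u_{xx}(q,X_q)^2\le1$ at \emph{both} endpoints, i.e.\ $h'(a)\le0$ and $h'(b)\le0$, this forces $h'\le0$. Then $\int_a^b h'=0$ gives $\Gamma''\equiv0$ on the gap, a contradiction.

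Proving the crossing property needs two ingredients absent from your proposal. The first is a slope-coordinate invariant for Cole--Hopf evolutions. Regard $C=u_{xx}$ as a function of $B=u_x$ and set $K=-\tfrac{C_B}{2B}-m$, $J=(BK)_B$, $z=(m+K)B$; the invariant is $K,K_B,J,J_B\ge0$ and $CJ_B\le3zJ$. It is propagated by a maximum principle through finite cascades and passed to general measures by approximation. The second is the set of inequalities $V_x,V_{xx}\ge0$ and $V_{xxx}\le0$ for $V=-\log\bigl(p\,\e^{-\alpha u}\bigr)$. These sign the covariance terms in a weighted slope-coordinate formula for $\Gamma''$. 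Only closed inequalities are carried through the approximation, and strictness is recovered directly for the limiting object, so the uniform quantitative margin you worry about is never needed.

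Your Step 1 likewise has no mechanism. A first-order perturbation moving mass into the gap lowers $\mathcal P_\beta$ only if $\phi<0$ somewhere in the gap, which is again a question about the shape of $\Gamma$ there. The ``key input'' you cite is also unavailable: optimality at zero gives $\beta^2u_{xx}(0,0)^2\le1$. The paper derives the reverse strict inequality from the gap hypothesis, using the crossing property on $(0,q_\star)$ together with $\int_0^{q_\star}(\Gamma(s)-s)\dd s=0$ and $\Gamma(q_\star)=q_\star$.

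Step 3 cannot work as described either. Once $\supp\mu_\beta=[0,q_\beta]$, the point $q_\beta$ is a limit of support points from the left, so $\Psi(q_\beta)=\Gamma'(q_\beta)=1$. This holds for the true Parisi measure, which has an atom at $q_\beta$ and a log-concave transformed density. So no argument using only $\Psi(q_\beta)=1$ and log-concavity can produce a contradiction. The no-atom hypothesis must enter at the next order:
\begin{itemize}
\item $\Gamma'\equiv1$ on $(0,q_\beta)$ gives $\E[u_{xxx}(s,X_s)^2-2\alpha(s)u_{xx}(s,X_s)^3]=0$ for a.e.\ $s<q_\beta$.
\item $\alpha(q_\beta-)=1$ lets you pass to $s=q_\beta$, where $u(q_\beta,\cdot)=\log\cosh+\mathrm{const}$. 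This gives $\E\sech^6X_{q_\beta}/\E\sech^4X_{q_\beta}=2/3$.
\end{itemize}
The log-concave object is $f=p_{q_\beta}\e^{-u(q_\beta,\cdot)}$, not the law of $u_x(q_\beta,X_{q_\beta})$. Its log-concavity comes from a Brownian-bridge formula: heat flow from a point mass, multiplied by $\e^{-cu}$ at each atom. Since $f$ is even and nonincreasing on $[0,\infty)$, a Chebyshev covariance inequality against the density $\sech^3x$ shows the same ratio strictly exceeds $\int_0^\infty\sech^5/\int_0^\infty\sech^3=3/4$.

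Finally, the atom at zero cannot be excluded by ``the same Cole--Hopf argument'': once zero is an accumulation point, there is no constant-mass interval at zero. The paper instead uses $\Gamma'(0)=1$ and the limit $\E[u_{xxx}(s,X_s)^2-2\alpha(s)u_{xx}(s,X_s)^3]\to-2\mu_\beta(\{0\})\beta^{-3}$ as $s\downarrow0$. If $\mu_\beta(\{0\})>0$, this gives $\Gamma(s)<s$ for small $s>0$, contradicting $\Gamma(q_n)=q_n$ along support points $q_n\downarrow0$.
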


Here and below,
\(\rho_\beta\in C^\infty([0,q_\beta))\) means that
\(\rho_\beta\in C^\infty((0,q_\beta))\) and that every derivative of
\(\rho_\beta\) has a continuous right extension to \(0\).

\subsection{Proof sketch}
The proof has two main stages. We first show that zero cannot be an isolated
point of the support. Suppose that \(q_\star>0\) is the smallest positive
support point. On the gap \((0,q_\star)\), the distribution function of
the Parisi measure is constant, so the Parisi PDE is given explicitly by
a Gaussian Cole--Hopf evolution from the solution at time \(q_\star\).
We prove a differential invariant for such evolutions, first for finite
Cole--Hopf compositions and then, by approximation, for the function
produced by an arbitrary Parisi measure above \(q_\star\). 

Let \[
    \Gamma(q)=\E\bigl[u_x(q,X_q)^2\bigr]
\]
denote the quantity appearing in the Parisi self-consistency conditions, where \(X\) is the associated
optimal diffusion.
Then the equality at \(0\) and \(q_\star\) of the variational function
associated with \(\mu_\beta\), the self-consistency
condition \begin{equation}\label{e:sc}
\Gamma(q_\star)=q_\star,\end{equation}
and the Cole--Hopf differential invariant together imply
\begin{equation}\label{e:strict-intro}
    \Gamma(q_\star)
    <
    \beta^2q_\star u_{xx}(0,0)^2.
\end{equation}
The Parisi optimality conditions at \(0\) also give
\begin{equation}\label{e:stable-intro}
    \beta^2u_{xx}(0,0)^2\le1.
\end{equation}
Together, \eqref{e:sc} and \eqref{e:stable-intro} contradict the strict
inequality \eqref{e:strict-intro}. It follows that
zero is an accumulation point of the support. 
The accumulation at zero forces the stability
condition \eqref{e:stable-intro} to hold with equality, and a short
additional argument using the self-consistency conditions shows that
\(\mu_\beta\) has no atom at zero.

Next, to study the global structure of $\mu_\beta$, let \(p(s,\cdot)\) be the density of the
optimal diffusion and define the transformed density
\[
    r(s,x)=p(s,x)\exp\big(-\alpha(s)u(s,x)\big),
    \qquad \alpha(s)=\mu_\beta([0,s]).
\]
Using an exact Brownian-bridge representation, we show that for \(V=-\log r\), 
\[
    V_x\ge0,\qquad V_{xx}\ge0,\qquad V_{xxx}\le0
\]
for all \(0<s\le1\) and $x>0$. The proof first establishes these inequalities for measures with
finitely many atoms by following the heat evolution between successive
atoms and the corresponding updates at the atoms, and then passes to an
arbitrary measure by approximation.

Let $q_\beta$ denote the largest point in the support of $\mu_\beta$. On any connected component \((a,b) \subset [0,q_\beta]\) in the complement of the support,
combining the differential inequalities for $V$  with the Cole--Hopf
invariant from the first stage shows that every zero of
\(\Gamma''\) on the gap is crossed from negative to positive:
\begin{equation}\label{e:crossing-intro}
    \Gamma''(s)=0\quad\Longrightarrow\quad\Gamma'''(s)>0,
    \qquad a<s<b.
\end{equation}
The variational conditions at the two endpoints then rule out the existence of such a gap.
Thus the support is a single interval \([0,q_\beta]\). 

Having shown that
\(\supp\mu_\beta=[0,q_\beta]\), the self-consistency condition gives
\(\Gamma(s)=s\), and hence \(\Gamma'(s)=1\), for every
\(0<s<q_\beta\). If the
maximal support point were not an atom, the identity
\(\Gamma'=1\) below it would imply
\[
\frac{\E\big[\sech^6(X_{q_\beta})\big]}
     {\E\big[\sech^4(X_{q_\beta})\big]}
=\frac23,
\]
but the log-concavity of the transformed density
\(p(q_\beta,x)\e^{-u(q_\beta,x)}\) forces the same ratio to be strictly
greater than \(3/4\). This contradiction shows the endpoint is an atom. The regularity theorem of Auffinger and Chen then implies that the
remaining part of \(\mu_\beta\) has a smooth density on
\([0,q_\beta)\), so no other singular components are present
\cite{AuffingerChenProperties}.

The general strategy of using the self-consistency function
\(\Gamma\) and the variational conditions at the endpoints of a
hypothetical support gap to prove interval support is closely related
to Zhou's near-critical argument \cite{ZhouFRSB}.  The new ingredients
here are the Cole--Hopf slope invariant and inequalities for the derivatives of $V$,
which yield a crossing principle on arbitrary gaps and remove the
near-critical restriction.

\subsection{Organization}
\Cref{s:variational} recalls the variational optimality conditions and
stochastic identities used throughout the paper.
\Cref{s:slope-invariant} develops the Cole--Hopf slope invariant and
extends it from finite cascades to profiles arising from arbitrary Parisi
measures. \Cref{s:first-gap-crossing} establishes the strict inequality underlying
\eqref{e:strict-intro}, which is applied in \Cref{s:accumulation} to
show that zero is an accumulation point of the support. \Cref{s:origin-analysis}
proves that the stability inequality \eqref{e:stable-intro} at zero is
saturated and that $\mu_\beta$ has no atom at zero. \Cref{s:dual-density} studies the transformed density \(r\) and proves
the differential inequalities for \(V=-\log r\) needed in the global gap
argument. \Cref{s:dual-tail} proves Gaussian decay for the optimal-diffusion
density and derivative bounds for \(V=-\log r\), which justify the
differentiations under the integral sign and integrations by parts in
the proof of the crossing property on a support gap.  Finally,
\Cref{s:gap-exclusion} proves the crossing property \eqref{e:crossing-intro} on every support gap,
shows that the support is connected, proves that its maximal point is an
atom, and completes the proof of \Cref{thm:main}.

\subsection{Acknowledgments}
The author was partially supported by NSF grant DMS-2450004.
We thank Amol Aggarwal for helpful conversations.
This paper was prepared with the assistance of large language models,
including  suggesting possible arguments, drafting and revising the
manuscript, and writing code for numerical checks.

\section{Parisi optimality conditions}\label{s:variational}

We begin by recalling the variational conditions satisfied by the
Parisi measure and the stochastic representation used throughout the
proof. In particular, we introduce the optimal diffusion and the
self-consistency function \(\Gamma\), record the identities and
inequalities that hold at points of \(\supp\mu_\beta\), and derive the
stochastic differential formulas for the spatial derivatives of the
Parisi PDE solution.

Let $(W_s)_{0\le s\le1}$ be a standard Brownian motion.  For a Borel
probability measure $\mu$ on $[0,1]$, let $(X_s^\mu)_{0\le s\le1}$ be the
unique strong solution of
\begin{equation}\label{eq:optimal-diffusion}
 \dd X_s^\mu
 =\beta^2\mu([0,s])u_{\mu,x}(s,X_s^\mu)\dd s+\beta\dd W_s,
 \qquad
 X_0^\mu=0.
\end{equation}
By \Cref{prop:PDE-regularity} below, the drift is bounded and globally
Lipschitz in the spatial variable, uniformly in time, so standard SDE
well-posedness applies.  Set 
\begin{equation}\label{eq:Gamma-G-def}
 \Gamma_\mu(s)=\E\bigl[u_{\mu,x}(s,X_s^\mu)^2\bigr],
 \qquad
 G_\mu(s)=\int_s^1\frac{\beta^2}{2}
 \bigl(\Gamma_\mu(r)-r\bigr)\dd r.
\end{equation}
The process in \eqref{eq:optimal-diffusion} is the optimal diffusion in
the stochastic control representation of the Parisi functional
\cite{AuffingerChenUnique,JagannathTobasco2016}.

\begin{proposition}\label{prop:JT}
Every point of $\supp\mu_\beta$ minimizes $G_{\mu_\beta}$.  Moreover,
$0\in\supp\mu_\beta$.
\end{proposition}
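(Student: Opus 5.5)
The plan is to derive both assertions from the first-order optimality of $\mu_\beta$ for the Parisi functional, expressed via the first variation in the direction of another measure. This is essentially the Jagannath--Tobasco criterion \cite{JagannathTobasco2017} combined with the stochastic representation of the derivative of $u_\mu(0,0)$ \cite{AuffingerChenUnique}. I would cite it, but sketch the argument as follows.

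For $\nu\in\Pr([0,1])$ and $t\in[0,1]$, set $\mu_t=(1-t)\mu_\beta+t\nu$. Write $\alpha_\mu(s)=\mu([0,s])$. The functional is convex, and one computes its right derivative at $t=0$.

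For the PDE term, differentiate \eqref{eq:parisi-pde} in the coefficient $\alpha_\mu$. The linearized equation is driven by the source $\frac{\beta^2}{2}(\alpha_\nu-\alpha_{\mu_\beta})u_x^2$. The generator of this linearization is exactly that of the optimal diffusion \eqref{eq:optimal-diffusion}. A Feynman--Kac representation then gives
\[
\frac{d}{dt}u_{\mu_t}(0,0)\Big|_{t=0}=\frac{\beta^2}{2}\int_0^1(\alpha_\nu(s)-\alpha_{\mu_\beta}(s))\Gamma_{\mu_\beta}(s)\dd s .
\]
The last term of \eqref{eq:parisi-functional} is linear in $\alpha$ and contributes
\[
-\frac{\beta^2}{2}\int_0^1 s\,(\alpha_\nu-\alpha_{\mu_\beta})\dd s .
\]
Adding the two contributions and using Fubini in the form $\int_0^1\alpha_\nu(s)h(s)\dd s=\int \int_r^1 h(s)\dd s\,\nu(\dd r)$, the derivative equals
\[
\int G_{\mu_\beta}\dd\nu-\int G_{\mu_\beta}\dd\mu_\beta .
\]

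Minimality gives that this quantity is $\ge0$ for all $\nu$. Choosing $\nu=\delta_q$ yields $G(q)\ge\int G\dd\mu_\beta$ for every $q$. Continuity of $G$ then forces $G=\min G$ $\mu_\beta$-a.e., hence on all of $\supp\mu_\beta$.

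For $0\in\supp\mu_\beta$, suppose instead that $\mu_\beta([0,\epsilon])=0$ for some $\epsilon>0$. Then $\alpha=0$ on $[0,\epsilon)$, so the drift vanishes there and $X_s=\beta W_s$ on that interval. Moreover $u_x(s,\cdot)$ is odd, because $\log\cosh$ is even and the PDE preserves evenness. Hence $\Gamma(0)=u_x(0,0)^2=0$ and $G'(0)=-\frac{\beta^2}{2}(\Gamma(0)-0)=0$. Using the Itô formula for $u_x(s,X_s)$ on the driftless region, $\Gamma'(s)=\beta^2\E[u_{xx}^2]>0$ there, so $\Gamma(s)>0=s\cdot 0$ near $0$. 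More precisely, $\Gamma(s)-s$ has derivative $\beta^2\E u_{xx}^2-1$ at $0$.

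This is not immediately signed, and it is the main obstacle: a positive first gap must be excluded. I would instead use the global condition. Since $q_\star=\min\supp\mu_\beta>0$ is a minimizer, we have $G(0)\ge G(q_\star)$, and equivalently $\int_0^{q_\star}(\Gamma-r)\dd r\le 0$. One then needs a contradiction with the self-consistency at $q_\star$, namely $G'(q_\star)=0$, i.e.\ $\Gamma(q_\star)=q_\star$. The cleanest route is to cite Auffinger--Chen \cite{AuffingerChenProperties}, who prove $0\in\supp\mu_\beta$ at zero external field via exactly this perturbation: moving a small mass from $q_\star$ toward $0$ strictly decreases $\mathcal P_\beta$. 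The reason is convexity of $\Gamma$ on the driftless gap, which follows from the Itô computation with $\E u_{xx}^2$ increasing. Together with $\Gamma(0)=0$ and $\Gamma(q_\star)=q_\star$, convexity gives $\Gamma(r)<r$ on $(0,q_\star)$, so $G(0)>G(q_\star)$ fails in the required direction. I expect checking this convexity, and hence the strict sign, to be the delicate step; I would defer to the cited result for it.
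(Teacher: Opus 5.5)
Your proposal follows the paper's proof. The first-variation computation you sketch is the content of \cite[Corollary~3.6]{JagannathTobasco2017}, which gives $\mu_\beta(\operatorname{argmin}G_{\mu_\beta})=1$, and continuity of $G_{\mu_\beta}$ then upgrades this to every point of $\supp\mu_\beta$; the claim $0\in\supp\mu_\beta$ is taken from \cite[Theorem~1]{AuffingerChenProperties}, exactly as you propose.

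In your heuristic for the second claim there is a sign slip. Minimality of $G_{\mu_\beta}$ at $q_\star$ gives $\int_0^{q_\star}(\Gamma(r)-r)\dd r\ge0$, not $\le0$, and it is this inequality that strict convexity of $\Gamma$ on the driftless gap contradicts. That convexity is not delicate: since $\alpha=0$ there, $\Gamma''(s)=\beta^4\E\,u_{xxx}(s,\beta W_s)^2>0$, so your sketch actually closes without the citation.
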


\begin{proof}
Set
\[
 \mathcal M
 =
 \operatorname*{argmin}_{q\in[0,1]}G_{\mu_\beta}(q).
\]
The optimality characterization
\cite[Corollary~3.6]{JagannathTobasco2017} gives
\[
 \mu_\beta(\mathcal M)=1.
\]
By \eqref{eq:Gamma-G-def} and the bound
\(\lvert u_{\mu_\beta,x}\rvert\le1\) recalled in
\Cref{prop:PDE-regularity}, the function \(G_{\mu_\beta}\) is
Lipschitz, and hence \(\mathcal M\) is closed.  The support of a Borel
probability measure is contained in every closed set of full measure,
so
\[
 \supp\mu_\beta\subseteq\mathcal M.
\]
Thus every point of \(\supp\mu_\beta\) minimizes
\(G_{\mu_\beta}\).  The inclusion \(0\in\supp\mu_\beta\) is
\cite[Theorem~1]{AuffingerChenProperties}.
\end{proof}

\begin{lemma}\label{lem:optimality-consequences}
Let $\mu=\mu_\beta$.  For every $q\in\supp\mu$,
\begin{equation}\label{eq:self-consistency}
 \Gamma_\mu(q)=q
\end{equation}
and
\begin{equation}\label{eq:support-curvature}
 \beta^2\E\bigl[u_{\mu,xx}(q,X_q^\mu)^2\bigr]\le1.
\end{equation}
In particular,
\begin{equation}\label{eq:left-stability}
 \beta^2u_{\mu,xx}(0,0)^2\le1.
\end{equation}
\end{lemma}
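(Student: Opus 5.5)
We derive both conclusions from the minimality statement in \Cref{prop:JT}, using first- and second-order conditions for $G=G_{\mu}$ at points of $\supp\mu$. Since $G'(s)=-\frac{\beta^2}{2}(\Gamma_\mu(s)-s)$, the plan has three steps.

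\textbf{Step 1: differentiability of $\Gamma_\mu$.} We show that $\Gamma_\mu$ is $C^1$ on $[0,1]$ and compute $\Gamma_\mu'$ by It\^o's formula. Differentiating the Parisi PDE \eqref{eq:parisi-pde} in $x$ shows that $w=u_{\mu,x}$ solves
\[
\partial_s w+\tfrac{\beta^2}{2}\bigl(w_{xx}+2\mu([0,s])u_{\mu,x}w_x\bigr)=0 .
\]
Along the diffusion \eqref{eq:optimal-diffusion} the drift terms cancel, so $\dd\, u_{\mu,x}(s,X_s)=\beta u_{\mu,xx}(s,X_s)\dd W_s$ is a martingale. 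Applying It\^o's formula to its square gives
\[
\Gamma_\mu(s)=u_{\mu,x}(0,0)^2+\beta^2\int_0^s\E\bigl[u_{\mu,xx}(r,X_r)^2\bigr]\dd r .
\]
Hence $\Gamma_\mu'(s)=\beta^2\E[u_{\mu,xx}(s,X_s)^2]$. This function is continuous in $s$, by the boundedness and continuity of $u_{\mu,xx}$ from \Cref{prop:PDE-regularity} and the continuity of $X$. Consequently $G$ is $C^2$, with $G''(s)=-\frac{\beta^2}{2}(\Gamma_\mu'(s)-1)$.

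\textbf{Step 2: self-consistency \eqref{eq:self-consistency}.} For interior $q\in\supp\mu\cap(0,1)$, minimality gives $G'(q)=0$, which is exactly \eqref{eq:self-consistency}. The endpoints need separate arguments.

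At $q=0$, one-sided minimality only gives $G'(0)\ge0$, that is, $\Gamma_\mu(0)\le 0$. Since $\Gamma_\mu\ge0$, this forces $\Gamma_\mu(0)=0$. This is consistent: at zero external field $u_{\mu,x}(0,0)=0$ by oddness of $u_{\mu,x}$ in $x$.

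At $q=1$, one-sided minimality gives $G'(1)\le 0$, that is, $\Gamma_\mu(1)\ge1$. But $|u_{\mu,x}|=|\tanh|<1$ at time $1$, so $\Gamma_\mu(1)<1$, a contradiction. Hence $1\notin\supp\mu$.

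\textbf{Step 3: the curvature bound \eqref{eq:support-curvature}.} For interior $q$, with $G'(q)=0$ and $G\in C^2$, the second-order condition $G''(q)\ge0$ gives $\Gamma_\mu'(q)\le1$. This is \eqref{eq:support-curvature}.

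At $q=0$ we have $G'(0)=0$, and minimality over $[0,\epsilon)$ together with the Taylor expansion $G(s)=G(0)+\tfrac12G''(0)s^2+o(s^2)$ again gives $G''(0)\ge0$. Finally, \eqref{eq:left-stability} follows because $X_0=0$, so $\E[u_{\mu,xx}(0,X_0)^2]=u_{\mu,xx}(0,0)^2$.

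\textbf{Main obstacle.} The delicate point is Step 1: justifying It\^o's formula and the continuity of $\Gamma_\mu'$. This needs enough regularity of $u_\mu$ ($u_{\mu,xx}$ continuous and bounded, and $u_{\mu,x}$ of class $C^{1,2}$ away from the jump times of $\mu([0,s])$), which I would take from \Cref{prop:PDE-regularity}. If only weak differentiability in $s$ is available, I would work instead with the integrated identity. Then $G$ is still $C^2$ provided $s\mapsto\E[u_{\mu,xx}(s,X_s)^2]$ is continuous, which follows from the continuity of $u_{\mu,xx}$ in $(s,x)$ and dominated convergence.
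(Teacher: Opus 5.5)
Your argument is correct, but it takes a different route from the paper. The paper does not derive these inequalities: it quotes \eqref{eq:self-consistency} and \eqref{eq:support-curvature} as the two self-consistency conditions of \cite[Proposition~1.1]{JagannathTobasco2017}. It then notes $0\in\supp\mu$ and $X_0^\mu=0$ to obtain \eqref{eq:left-stability}.

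You instead re-derive those conditions from the minimality statement in \Cref{prop:JT}, as first- and second-order optimality conditions for $G$. The key input is $\Gamma'(s)=\beta^2\E[u_{xx}(s,X_s)^2]$. The paper proves exactly this later, in \Cref{lem:Gamma-derivatives}, using the It\^o--Krylov formula, which resolves your ``main obstacle.'' That lemma does not depend on the present one, so there is no circularity.

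What your route buys:
\begin{itemize}
\item It makes the lemma self-contained given \Cref{prop:JT}.
\item It treats the endpoints explicitly. Your observation that $G'(1)=\tfrac{\beta^2}{2}(1-\E\tanh^2X_1)>0$ rules out $1\in\supp\mu$ directly. The paper only recovers this fact later, from self-consistency at $q=1$ in \Cref{prop:connected-support}.
\item At $q=0$, your one-sided Taylor argument correctly uses $G'(0)=0$ (from evenness) and right-continuity of $\Gamma'$ at $0$.
\end{itemize}
The paper's citation buys brevity by delegating the regularity and the variational calculus to Jagannath--Tobasco.

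One small omission: applying the second-order condition at $q=0$, and hence deducing \eqref{eq:left-stability}, requires $0\in\supp\mu$. This is the second half of \Cref{prop:JT}, and you should state it explicitly rather than relying only on $X_0=0$.
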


\begin{proof}
Equations \eqref{eq:self-consistency} and
\eqref{eq:support-curvature} are the two
self-consistency conditions in
\cite[Proposition 1.1]{JagannathTobasco2017}.  By
\Cref{prop:JT}, one has $0\in\supp\mu$, and
$X_0^\mu=0$.  Substituting $q=0$ into
\eqref{eq:support-curvature} gives \eqref{eq:left-stability}.
\end{proof}

\begin{lemma}\label{lem:delta0-excluded}
If $\beta>1$, then $\mu_\beta\ne\delta_0$.
\end{lemma}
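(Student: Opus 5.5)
We argue by contradiction: we assume $\mu_\beta=\delta_0$ and show that this violates the curvature condition \eqref{eq:left-stability} of \Cref{lem:optimality-consequences} whenever $\beta>1$. The point is that for $\mu=\delta_0$ the Parisi PDE can be solved explicitly.

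\emph{Step 1: solve the PDE.} For $\mu=\delta_0$ we have $\mu([0,s])=1$ for every $s\in[0,1]$, so \eqref{eq:parisi-pde} reads
\[
\partial_s u+\tfrac{\beta^2}{2}\bigl(u_{xx}+u_x^2\bigr)=0,\qquad u(1,x)=\log\cosh x .
\]
The Cole--Hopf substitution $w=\e^{u}$ linearizes this to the backward heat equation $\partial_s w+\tfrac{\beta^2}{2}w_{xx}=0$ with $w(1,x)=\cosh x$. The solution is
\[
w(s,x)=\E\cosh\bigl(x+\beta\sqrt{1-s}\,Z\bigr)=\e^{\beta^2(1-s)/2}\cosh x,
\]
where $Z$ is a standard Gaussian. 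Hence $u(s,x)=\log\cosh x+\tfrac{\beta^2}{2}(1-s)$. Uniqueness of the solution in the class of \Cref{prop:PDE-regularity} identifies this explicit function with $u_{\delta_0}$.

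\emph{Step 2: evaluate the curvature at the origin.} From Step 1, $u_x(s,x)=\tanh x$ and $u_{xx}(s,x)=\sech^2 x$. In particular $u_{xx}(0,0)=1$.

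\emph{Step 3: derive the contradiction.} If $\mu_\beta=\delta_0$, then \eqref{eq:left-stability} gives $\beta^2 u_{xx}(0,0)^2=\beta^2\le 1$. This contradicts $\beta>1$, so $\mu_\beta\ne\delta_0$.

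The only step needing any care is Step 1: one must check that the explicit solution agrees with the weak/viscosity solution used to define $u_\mu$ and the optimality conditions. This is immediate, since the explicit $u$ is smooth, has bounded $x$-derivative, and $\mu$ is a genuine probability measure, so uniqueness of the solution applies.

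Alternatively, one could avoid the PDE altogether and cite \cite{Toninelli2002}, which shows that the replica-symmetric ansatz is not minimizing for $\beta>1$. The argument above is self-contained given \Cref{lem:optimality-consequences}.
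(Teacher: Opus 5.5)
Your proposal is correct and follows the paper's proof essentially verbatim: you solve the Parisi PDE explicitly for $\delta_0$, read off $u_{xx}(0,0)=1$, and invoke the stability bound \eqref{eq:left-stability} to get $\beta^2\le1$. Your Cole--Hopf derivation of the explicit solution just spells out a step the paper states directly.
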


\begin{proof}
For $\mu=\delta_0$, one has $\mu([0,s])=1$ on $[0,1]$, and the solution
of \eqref{eq:parisi-pde} is
\[
 u_\mu(s,x)=\frac{\beta^2}{2}(1-s)+\log\cosh x.
\]
In particular, $u_{\mu,xx}(0,0)=1$.  If $\delta_0$ were the Parisi
measure, \eqref{eq:left-stability} would give $\beta^2\le1$, contrary to
$\beta>1$.
\end{proof}
In the following statement, and below, we use the notation $\mu_n \Rightarrow \mu$ to indicate weak convergence of a sequence of probability measures.
\begin{proposition}\label{prop:PDE-regularity}
For every $\nu\in\Pr([0,1])$, the solution $u_\nu$ is even in $x$;
all positive-order spatial derivatives are bounded and jointly continuous
on $[0,1]\times\R$; and
\begin{equation}\label{eq:curvature-bounds}
 |u_{\nu,x}|\le1,
 \qquad \underline{c}_\beta\sech^2x\le u_{\nu,xx}\le1
\end{equation}
for a constant $\underline{c}_\beta>0$ independent of $\nu$.  If 
$\nu_n\Rightarrow\nu$, then for every $j\ge0$,
\begin{equation}\label{eq:uniform-derivative-convergence}
 \sup_{(s,x)\in[0,1]\times\R}
 |\partial_x^ju_{\nu_n}(s,x)-\partial_x^ju_\nu(s,x)|\longrightarrow0.
\end{equation}
Moreover,
\begin{equation}\label{eq:slope-endpoints}
 \lim_{x\to\pm\infty}u_{\nu,x}(s,x)=\pm1.
\end{equation}
\end{proposition}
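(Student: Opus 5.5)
The plan is to prove everything first for atomic measures $\nu=\sum_{k}m_k\delta_{q_k}$ with finitely many atoms, where the Parisi PDE can be solved explicitly, and then pass to general $\nu$ by approximation. For such $\nu$, $\nu([0,s])$ is a step function taking values $m_0\le m_1\le\dots$ in $[0,1]$. On each interval where the value is $m>0$, the Cole--Hopf substitution $\phi=\e^{m u}$ turns \eqref{eq:parisi-pde} into the backward heat equation $\partial_s\phi+\frac{\beta^2}{2}\phi_{xx}=0$. Hence
\[
u(s,x)=\frac1m\log\E\bigl[\e^{m u(t,x+\beta\sqrt{t-s}Z)}\bigr],
\]
with $Z$ standard Gaussian. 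On an interval where the value is $0$, $u$ simply solves the heat equation. Starting from $\log\cosh x$ at $s=1$, I would propagate by induction over the intervals the following properties.

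\textbf{The properties to propagate.} (i) Evenness in $x$. (ii) $\lvert u_x\rvert\le1$ with limits $\pm1$ at $\pm\infty$. (iii) $0<u_{xx}\le1$. (iv) Boundedness of all higher spatial derivatives, with constants depending only on $\beta$.

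Evenness is preserved because the Gaussian kernel is symmetric. For the slope, differentiate the Cole--Hopf formula: $u_x(s,x)=\E_{Q}[u_x(t,x+\beta\sqrt{t-s}Z)]$, where $Q$ is the tilted measure with density proportional to $\e^{mu}$. This is an average of values in $[-1,1]$. As $x\to\pm\infty$ the tilted law is still Gaussian-tight, because $u_x$ is bounded and so $\e^{mu}$ tilts by at most a Gaussian shift. The limits $\pm1$ therefore persist by dominated convergence. For the curvature,
\[
u_{xx}=\E_Q[u_{xx}]+m\Var_Q(u_x).
\]
Positivity is immediate. For the upper bound I would instead use the maximum principle for the equation satisfied by $w=u_{xx}$:
\[
\partial_s w+\frac{\beta^2}{2}\bigl(w_{xx}+2m u_x w_x+2m w^2\bigr)=0.
\]
Since $\sup w\le 1$ at $s=1$, comparison with the ODE $\dot y=-\beta^2 m y^2$ does not immediately give $y\le1$; so I would instead follow Auffinger--Chen \cite{AuffingerChenProperties}, who prove $u_{xx}\le1$ through the stochastic representation $u_{xx}(s,X_s)=\E[\dots]$ with terminal value $\sech^2\le1$. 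More simply, I expect to cite \cite[Proposition 1 / Lemma 2]{AuffingerChenProperties} for the bounds in \eqref{eq:curvature-bounds}.

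\textbf{Lower bound, higher derivatives, and continuity.} The lower bound $\underline c_\beta\sech^2x$ comes from the representation $u_{xx}(s,x)\ge\E[\sech^2(Y_1)\mid Y_s=x]$, where $Y$ is the optimal diffusion started at $x$. The drift of $Y$ is bounded by $\beta^2$, so $Y_1-x$ has Gaussian tails uniformly in $\nu$, and $\E\sech^2(x+\xi)\ge c\sech^2x$. Higher derivatives are obtained by differentiating the equation repeatedly. Each $\partial_x^j u$ solves a linear parabolic equation with bounded drift $\beta^2\nu([0,s])u_x$ and a source that is polynomial in lower derivatives. Gr\"onwall-type bounds via the Feynman--Kac representation then give uniform bounds on $[0,1]\times\R$ independent of $\nu$. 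Joint continuity follows from the same representations.

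\textbf{Approximation and the main obstacle.} For general $\nu$ and for the convergence statement \eqref{eq:uniform-derivative-convergence}, I would use the Lipschitz dependence of $u_\nu$ on the distribution function in $L^1$ (Jagannath--Tobasco, Auffinger--Chen):
\[
\sup\lvert u_{\nu_n}-u_\nu\rvert\le C\int_0^1\bigl\lvert\nu_n([0,s])-\nu([0,s])\bigr\rvert\dd s.
\]
The right side tends to $0$ under weak convergence, since the distribution functions converge at all but countably many $s$. Upgrading to convergence of $\partial_x^j u$ is the main obstacle. I would take differences $\partial_x^j(u_{\nu_n}-u_\nu)$, which satisfy a linear equation driven by the coefficient difference $(\nu_n-\nu)([0,s])$ times bounded quantities. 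Duhamel with the Feynman--Kac representation then bounds them by $C_j\int_0^1\lvert\nu_n([0,s])-\nu([0,s])\rvert\dd s$, again using the uniform derivative bounds. Finally, the general-$\nu$ properties (evenness, \eqref{eq:curvature-bounds}, and \eqref{eq:slope-endpoints}) pass to the limit from atomic approximants. The bounds pass by uniform convergence. The limits at infinity pass because uniform convergence of $u_x$ lets the limit be exchanged with $x\to\pm\infty$.
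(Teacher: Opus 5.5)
\textbf{Verdict.} Your proposal is correct, but it takes a more self-contained route than the paper.

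\textbf{How the paper argues.} The paper cites \cite[Propositions~1--2]{AuffingerChenProperties} for evenness, the derivative bounds and continuity, \eqref{eq:curvature-bounds}, and \eqref{eq:uniform-derivative-convergence}. It proves only \eqref{eq:slope-endpoints}, and it does so directly for arbitrary $\nu$. The It\^o--Krylov formula shows that $u_{\nu,x}(r,Y^{s,x}_r)$ is a martingale along the optimal diffusion restarted at $(s,x)$, so $u_{\nu,x}(s,x)=\E\tanh(Y^{s,x}_1)$. The bounded drift gives $|Y^{s,x}_1-x-\beta(W_1-W_s)|\le\beta^2(1-s)$, and dominated convergence finishes the argument.

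\textbf{How your route differs.} You get the endpoint limits for atomic measures from the tilted Cole--Hopf average. You then transfer them to general $\nu$ by exchanging limits, using uniform convergence of $u_x$. You also rebuild the cited results via Feynman--Kac bounds and a Duhamel estimate for differences.

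\textbf{What each buys.} The paper's route is shorter and needs no approximation for \eqref{eq:slope-endpoints}. Yours is largely independent of the citation. It also yields the quantitative bound
\[
\sup|\partial_x^j(u_{\nu_n}-u_\nu)|\le C_j\int_0^1|\nu_n([0,s])-\nu([0,s])|\dd s,
\]
which is stronger than mere convergence.

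\textbf{A correction on the upper curvature bound.} The stochastic representation you mention cannot give $u_{xx}\le1$. Since $\dd C_s=-\beta^2\alpha(s)C_s^2\dd s+\beta D_s\dd W_s$, one has
\[
u_{xx}(s,x)=\E\sech^2(Y^{s,x}_1)+\beta^2\E\int_s^1\alpha(r)C_r^2\dd r.
\]
This yields only the lower bound, which is how you correctly use it there.

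Within your own Cole--Hopf induction the upper bound is immediate, so you need not cite it. Propagate the invariant $u_{xx}+u_x^2\le1$, which holds with equality for $\log\cosh$. Write $\phi=u(t,\cdot)$, evaluated at $x+\beta\sqrt{t-s}Z$ under your tilted measure $Q$. If $\phi''+\phi'^2\le1$ and $m\in[0,1]$, then
\[
 u_{xx}=\E_Q\phi''+m\Var_Q(\phi')
 \le1-(1-m)\E_Q[\phi'^2]-m(\E_Q\phi')^2
 \le1-u_x^2 ,
\]
using Jensen's inequality and $u_x=\E_Q\phi'$. This covers both the Cole--Hopf steps and the heat steps ($m=0$), and it makes your route fully self-contained.
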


\begin{proof}
The evenness, the boundedness and continuity of the spatial derivatives,
the bounds in \eqref{eq:curvature-bounds}, and the convergence statement
are contained in
\cite[Propositions~1--2]{AuffingerChenProperties}. We verify
\eqref{eq:slope-endpoints} directly.  For fixed \((s,x)\), let \(Y^{s,x}\) denote the optimal diffusion
started from \(x\) at time \(s\). Thus \(Y^{s,x}\) is the restarted
version of \eqref{eq:optimal-diffusion}: writing
\(\alpha_\nu(q)=\nu([0,q])\), it satisfies
\[
    \dd Y_r^{s,x}
    =
    \beta^2\alpha_\nu(r)
    u_{\nu,x}(r,Y_r^{s,x})\dd r
    +\beta\dd W_r,
    \qquad
    Y_s^{s,x}=x.
\]
Equivalently,
\begin{equation}\label{e:xdiff}
    Y_r^{s,x}
    =
    x+\beta(W_r-W_s)
    +\beta^2\int_s^r\alpha_\nu(q)
    u_{\nu,x}(q,Y_q^{s,x})\dd q,
    \qquad s\le r\le1.
\end{equation}
Put \(p=u_{\nu,x}\). Differentiating the Parisi PDE once in \(x\)
gives
\[
 p_s+\frac{\beta^2}{2}p_{xx}
 +\beta^2\alpha_\nu(s)u_{\nu,x}p_x=0,
 \qquad p(1,x)=\tanh x.
\]
The It\^o--Krylov formula
\cite[Section~3, equation~(3.6)]{ElworthyTrumanZhao},
applied to \(p(r,Y_r^{s,x})\), gives
\[
    p(r,Y_r^{s,x})
    =
    p(s,x)
    +\beta\int_s^r p_x(q,Y_q^{s,x})\dd W_q,
    \qquad s\le r\le1.
\]
Since \(p_x=u_{\nu,xx}\) is bounded, the stochastic integral is a
square-integrable martingale. Hence \(p(r,Y_r^{s,x})\) is a martingale,
and therefore
\[
    u_{\nu,x}(s,x)
    =p(s,x)
    =\E p(1,Y_1^{s,x})
    =\E\tanh(Y_1^{s,x}).
\]
Because \(|u_{\nu,x}|\le1\), \eqref{e:xdiff} yields
\begin{equation}\label{e:usedct}
 \left|Y_1^{s,x}-x-\beta(W_1-W_s)\right|
 \le\beta^2(1-s).
\end{equation}
For almost every Brownian path, \(W_1-W_s\) is finite, and
\eqref{e:usedct} therefore gives
\[
 Y_1^{s,x}\longrightarrow+\infty
 \quad\text{as }x\to+\infty,
 \qquad
 Y_1^{s,x}\longrightarrow-\infty
 \quad\text{as }x\to-\infty.
\]
Consequently,
\[
 \tanh(Y_1^{s,x})\longrightarrow1
 \quad\text{as }x\to+\infty,
 \qquad
 \tanh(Y_1^{s,x})\longrightarrow-1
 \quad\text{as }x\to-\infty
\]
almost surely.  Since
\(\lvert\tanh(Y_1^{s,x})\rvert\le1\), dominated convergence in
\[
 u_{\nu,x}(s,x)=\E\tanh(Y_1^{s,x})
\]
gives \eqref{eq:slope-endpoints}.
\end{proof}

For the remainder of the paper, unless otherwise indicated, set
\[
 \mu=\mu_\beta,\qquad \alpha(s)=\mu([0,s]),\qquad
 u=u_\mu,\qquad X=X^\mu,\qquad
 \Gamma=\Gamma_\mu,\qquad G=G_\mu.
\]

Along \eqref{eq:optimal-diffusion}, write
\begin{equation}\label{eq:M-C-D-A}
 M_s=u_x(s,X_s),\quad C_s=u_{xx}(s,X_s),\quad
 D_s=u_{xxx}(s,X_s),\quad A_s=u_{xxxx}(s,X_s).
\end{equation}

The first two identities below are stochastic-control versions of the
differentiation formulas for the Parisi self-consistency function in
\cite[Proposition~3]{AuffingerChenProperties}. We include their
derivation because we also require a third-derivative formula on
intervals where the distribution function of the Parisi measure is
constant.

\begin{lemma}\label{lem:Gamma-derivatives}
The processes above satisfy
\begin{align}
 \dd M_s&=\beta C_s\dd W_s,\label{eq:dM-general}\\
 \dd C_s&=-\beta^2\alpha(s)C_s^2\dd s+\beta D_s\dd W_s,
 \label{eq:dC-general}\\
 \dd D_s&=-3\beta^2\alpha(s)C_sD_s\dd s+\beta A_s\dd W_s.
 \label{eq:dD-general}
\end{align}
Consequently,
\begin{equation}\label{eq:Gamma-prime}
 \Gamma'(s)=\beta^2\E C_s^2,
\end{equation}
and
\begin{equation}\label{eq:Gamma-prime-integral}
 \Gamma'(t)-\Gamma'(r)=\beta^4\int_r^t
 \E[D_s^2-2\alpha(s)C_s^3]\dd s.
\end{equation}
If $\alpha(s)=m$ on an open interval $I$, then $\Gamma\in C^3(I)$ and
\begin{align}
 \Gamma''(s)&=\beta^4\E[D_s^2-2mC_s^3],
 \label{eq:Gamma-second-gap}\\
 \Gamma'''(s)&=\beta^6\E[A_s^2-12mC_sD_s^2+6m^2C_s^4].
 \label{eq:Gamma-third-gap}
\end{align}
\end{lemma}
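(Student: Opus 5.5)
The plan is to apply It\^o's formula along the optimal diffusion to the derivative processes and then take expectations. First differentiate the Parisi PDE \eqref{eq:parisi-pde} in $x$ once, twice and three times. Writing $p=u_x$, the function $p$ satisfies $p_s+\frac{\beta^2}{2}p_{xx}+\beta^2\alpha u_xp_x=0$. Differentiating again gives the equation for $u_{xx}$:
\[
\partial_s u_{xx}+\tfrac{\beta^2}{2}u_{xxxx}+\beta^2\alpha\bigl(u_xu_{xxx}+u_{xx}^2\bigr)=0.
\]
For $u_{xxx}$ it gives $\partial_su_{xxx}+\frac{\beta^2}{2}u_{xxxxx}+\beta^2\alpha(u_xu_{xxxx}+3u_{xx}u_{xxx})=0$. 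The generator of $X$ is $\frac{\beta^2}{2}\partial_{xx}+\beta^2\alpha u_x\partial_x$. Applying the It\^o--Krylov formula (as in the proof of \Cref{prop:PDE-regularity}; time regularity is only piecewise because $\alpha$ may jump, so we use the Krylov version, which needs only spatial smoothness and the PDE in integrated form), the transport terms $\beta^2\alpha u_x\partial_x$ cancel. What remains is \eqref{eq:dM-general}, the drift $-\beta^2\alpha C^2$ in \eqref{eq:dC-general}, and $-3\beta^2\alpha CD$ in \eqref{eq:dD-general}. \Cref{prop:PDE-regularity} bounds all spatial derivatives, so every stochastic integral is a true square-integrable martingale.

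Next, apply It\^o's formula to $M^2$. This gives $\dd(M^2)=2M\beta C\dd W+\beta^2C^2\dd s$, so $\Gamma(s)=\Gamma(0)+\beta^2\int_0^s\E C_r^2\dd r$. The map $s\mapsto \E C_s^2$ is continuous by dominated convergence, since $C$ has continuous paths and is bounded, so we obtain \eqref{eq:Gamma-prime}. Similarly, $\dd(C^2)=2C\dd C+\beta^2D^2\dd s$ has drift $\beta^2(D^2-2\alpha C^3)$. Taking expectations, multiplying by $\beta^2$ and using \eqref{eq:Gamma-prime} gives \eqref{eq:Gamma-prime-integral}.

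On an interval $I$ where $\alpha\equiv m$, the integrand $\E[D_s^2-2mC_s^3]$ is continuous in $s$ by the same argument, so $\Gamma\in C^2(I)$ with \eqref{eq:Gamma-second-gap}. For the third derivative we need the dynamics of $A$. Differentiating the PDE a fourth time and using It\^o as above gives
\[
\dd A=-\beta^2m\bigl(4CA+3D^2\bigr)\dd s+\beta u_{xxxxx}\dd W.
\]
We only need $D^2$ and $C^3$, however. Itô gives $\dd(D^2)=2D\dd D+\beta^2A^2\dd s$, with drift $\beta^2(A^2-6mCD^2)$. Also $\dd(C^3)=3C^2\dd C+3C\beta^2D^2\dd s$, with drift $\beta^2(-3mC^4+3CD^2)$. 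Hence the drift of $D^2-2mC^3$ is
\[
\beta^2\bigl(A^2-6mCD^2+6m^2C^4-6mCD^2\bigr)=\beta^2\bigl(A^2-12mCD^2+6m^2C^4\bigr).
\]
Multiplying by $\beta^4$ and using continuity in $s$ yields \eqref{eq:Gamma-third-gap} and $\Gamma\in C^3(I)$.

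The main obstacle is justifying It\^o's formula for $u_{xx}$ and $u_{xxx}$ at the jump times of $\alpha$, where $u$ is not $C^1$ in time. I would handle this either with the It\^o--Krylov formula (needing $W^{1,2}_{\mathrm{loc}}$ regularity, which follows from the PDE together with the bounded spatial derivatives) or by approximating $\mu$ with finitely atomic measures. In the atomic case $u$ is smooth between atoms (Cole--Hopf) and $\alpha$ is piecewise constant, so It\^o applies on each piece; one then passes to the limit using \eqref{eq:uniform-derivative-convergence} and the resulting convergence of $X^{\mu_n}$ to $X^\mu$. On the gap $I$ none of this is an issue, because there $u$ is smooth in time.
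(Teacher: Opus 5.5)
Your proposal is correct and follows essentially the same route as the paper: differentiate the time-integrated Parisi PDE in $x$, apply the It\^o--Krylov formula so the transport terms cancel, then apply It\^o's formula to $M^2$, $C^2$, $D^2$, $C^3$. You then close with dominated-convergence continuity in $s$ and the fundamental theorem of calculus, and your drifts match the paper's, including $\beta^2(A^2-12mCD^2+6m^2C^4)$ for $D^2-2mC^3$. The equation for $\dd A$ and the finite-atom approximation fallback are not needed, but they do no harm.
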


\begin{proof}
Write \(p=u_x\). Integrating \eqref{eq:parisi-pde} in time and then
differentiating the resulting identity in \(x\), which is justified by
\Cref{prop:PDE-regularity}, gives
\begin{align}
 p_s+\frac{\beta^2}{2}
 \left(p_{xx}+2\alpha pp_x\right)&=0,
 \label{eq:p-PDE}\\
 (u_{xx})_s+\frac{\beta^2}{2}
 \left(u_{xxxx}+2\alpha(u_{xx}^2+u_xu_{xxx})\right)&=0,
 \label{eq:C-PDE-general}\\
 (u_{xxx})_s+\frac{\beta^2}{2}
 \left(u_{xxxxx}+6\alpha u_{xx}u_{xxx}
 +2\alpha u_xu_{xxxx}\right)&=0
 \label{eq:D-PDE-general}
\end{align}
for almost every \(s\). 
More precisely, for each
\(f\in\{p,u_{xx},u_{xxx}\}\), the map \(s\mapsto f(s,x)\)
is absolutely continuous for every \(x\), and its almost-everywhere
time derivative is given by the corresponding equation in
\eqref{eq:p-PDE}--\eqref{eq:D-PDE-general}.  Moreover,
\(f_s\), \(f_x\), and \(f_{xx}\) belong to
\(L^2_{\mathrm{loc}}((0,1)\times\R)\).  The diffusion \(X\) has
diffusion coefficient \(\beta>0\) and drift
\(\beta^2\alpha(s)p(s,X_s)\), whose absolute value is bounded by
\(\beta^2\).  Then the It\^o--Krylov formula
applies to \(f(s,X_s)\) (see, e.g.,  \cite[Section~3, equation~(3.6)]{ElworthyTrumanZhao}).

Applying this formula to \(p(s,X_s)\), and using
\[
    \dd X_s
    =\beta^2\alpha(s)p(s,X_s)\dd s+\beta\dd W_s,
    \qquad
    \dd\langle X\rangle_s=\beta^2\dd s,
\]
shows that its drift is
\[
 p_s(s,X_s)
 +\beta^2\alpha(s)p(s,X_s)p_x(s,X_s)
 +\frac{\beta^2}{2}p_{xx}(s,X_s),
\]
which vanishes for almost every \(s\) by \eqref{eq:p-PDE}. Since
\(p_x=u_{xx}\), this proves \eqref{eq:dM-general}. Applying the same
formula to \(u_{xx}(s,X_s)\), its drift is
\begin{align*}
 &(u_{xx})_s+\beta^2\alpha u_xu_{xxx}
 +\frac{\beta^2}{2}u_{xxxx}\\
 &\qquad=-\beta^2\alpha u_{xx}^2,
\end{align*}
where \eqref{eq:C-PDE-general} was used in the second line. This proves
\eqref{eq:dC-general}. Finally, the drift in the It\^o formula for
\(u_{xxx}(s,X_s)\) is
\begin{align*}
 &(u_{xxx})_s+\beta^2\alpha u_xu_{xxxx}
 +\frac{\beta^2}{2}u_{xxxxx}\\
 &\qquad=-3\beta^2\alpha u_{xx}u_{xxx},
\end{align*}
by \eqref{eq:D-PDE-general}, which proves
\eqref{eq:dD-general}.

By \Cref{prop:PDE-regularity}, the processes \(M,C,D,A\) are bounded.
Hence every stochastic integral appearing in the It\^o identities below
is a square-integrable martingale and therefore has mean zero. Applying
It\^o's formula to \(M_s^2\) and using \eqref{eq:dM-general} yields
\[
 \dd(M_s^2)=2\beta M_sC_s\dd W_s+\beta^2C_s^2\dd s.
\]
For \(0\le r<t\le1\), taking expectations gives
\[
    \Gamma(t)-\Gamma(r)
    =
    \beta^2\int_r^t \E C_s^2\dd s.
\]
Since \(C_s=u_{xx}(s,X_s)\) is bounded and has continuous sample paths,
\(s\mapsto\E C_s^2\) is continuous by dominated convergence. Hence
\eqref{eq:Gamma-prime} follows from the fundamental theorem of calculus.
Likewise, \eqref{eq:dC-general} gives
\[
 \dd(C_s^2)
 =2\beta C_sD_s\dd W_s
 +\beta^2\left(D_s^2-2\alpha(s)C_s^3\right)\dd s.
\]
After taking expectations and multiplying by \(\beta^2\), this identity
and \eqref{eq:Gamma-prime} give \eqref{eq:Gamma-prime-integral}.

Suppose now that \(\alpha=m\) on an open interval \(I\). Since \(C,D,A\)
have continuous paths and are bounded, dominated convergence shows that
\[
    s\longmapsto \E[D_s^2-2mC_s^3],\qquad
    s\longmapsto \E[A_s^2-6mC_sD_s^2],
\]
and
\[
    s\longmapsto \E[C_sD_s^2-mC_s^4]
\]
are continuous on \(I\). The integral expectation identities obtained
from It\^o's formula may therefore be differentiated on \(I\) by the
fundamental theorem of calculus.

In particular, \eqref{eq:dD-general} and It\^o's formula give
\[
 \dd(D_s^2)
 =2\beta D_sA_s\dd W_s
 +\beta^2\left(A_s^2-6mC_sD_s^2\right)\dd s.
\]
A further application of It\^o's formula to \(C_s^3\) gives
\begin{align*}
 \dd(C_s^3)
 &=3C_s^2\dd C_s+3C_s\dd\langle C\rangle_s\\
 &=3\beta C_s^2D_s\dd W_s
 +3\beta^2\left(C_sD_s^2-mC_s^4\right)\dd s.
\end{align*}
Taking expectations in the formula for \(C_s^2\) proves
\eqref{eq:Gamma-second-gap}. Differentiating that identity and using
the last two drift computations give
\begin{align*}
 \Gamma'''(s)
 &=\beta^4\Bigl(
 \beta^2\E[A_s^2-6mC_sD_s^2]
 -6m\beta^2\E[C_sD_s^2-mC_s^4]\Bigr).
\end{align*}
This is \eqref{eq:Gamma-third-gap}.
\end{proof}

\section{Slope invariant for Cole--Hopf profiles}\label{s:slope-invariant}

This section develops the slope-coordinate inequalities used in the
first-gap argument. \Cref{lem:constant-mass-cole-hopf} identifies the
solution of the Parisi PDE across an interval on which the distribution
function of the measure is constant with a Gaussian Cole--Hopf
evolution. \Cref{lem:analytic-class} establishes the regularity and
endpoint expansions preserved by such evolutions, and
\Cref{cor:finite-iterate} applies these conclusions to finite
Cole--Hopf compositions. We then use the spatial derivative
\(B=U_x(r,x)\) as a coordinate and regard the curvature
\(C=U_{xx}(r,x)\) as a function of \((r,B)\).
\Cref{lem:slope-endpoint-expansion} translates the spatial asymptotics
into a uniform expansion for \(C(r,B)\) as \(B\to\pm1\), while
\Cref{lem:comparison} provides the maximum principle used to propagate
the resulting differential inequalities. \Cref{prop:invariant} shows that the differential inequalities satisfied
by \(C(r,B)\) remain valid as the Cole--Hopf evolution proceeds and when
the parameter in the Cole--Hopf transform is lowered. Finally,
\Cref{prop:closed-invariant} extends these inequalities, by approximation,
to boundary data generated by an arbitrary Parisi measure, while
\Cref{lem:endpoint-growth} supplies the endpoint estimates needed for
the differentiations and integrations by parts in the next section.

For \(r\ge0\), let
\begin{equation}\label{eq:def-semigroup}
 P_rg(x)=\E\,g(x+\sqrt r\,Z)
\end{equation}
denote the Gaussian heat semigroup, where \(Z\) is standard Gaussian.
For \(a>0\), define
\begin{equation}\label{eq:def-CH}
 T_{a,r}\phi(x)
 =
 \frac{1}{a}\log P_r\bigl(\exp(a\phi)\bigr)(x).
\end{equation}
We read compositions of these operators from right to left.

\begin{lemma}\label{lem:constant-mass-cole-hopf}
Let \(\nu\) be a probability measure on \([0,1]\). Suppose that, for
some \(0\le q_0<q_1\le1\) and \(\lambda\ge0\),
\[
 \nu([0,s])=\lambda,\qquad q_0\le s<q_1.
\]
If \(\lambda>0\), then
\begin{equation}\label{eq:constant-mass-cole-hopf}
 u_\nu(q_0,\cdot)
 =
 T_{\lambda,\beta^2(q_1-q_0)}u_\nu(q_1,\cdot).
\end{equation}
If \(\lambda=0\), then
\begin{equation}\label{eq:zero-mass-heat}
 u_\nu(q_0,\cdot)
 =
 P_{\beta^2(q_1-q_0)}u_\nu(q_1,\cdot).
\end{equation}
Consequently, every profile obtained by solving the Parisi PDE through
finitely many constant-mass intervals is a finite composition of
Cole--Hopf transforms and ordinary heat-semigroup evolutions.
\end{lemma}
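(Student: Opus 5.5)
On the interval \([q_0,q_1)\) the coefficient \(\alpha_\nu(s)=\nu([0,s])\) equals the constant \(\lambda\). There the Parisi equation \eqref{eq:parisi-pde} becomes the constant-coefficient viscous Hamilton--Jacobi equation
\[
\partial_s u+\tfrac{\beta^2}{2}\bigl(u_{xx}+\lambda u_x^2\bigr)=0 ,
\]
with terminal datum \(u_\nu(q_1,\cdot)\) at time \(q_1\). The plan is to linearize this equation by the Cole--Hopf substitution, solve the resulting backward heat equation explicitly, and conclude by uniqueness of solutions of the Parisi PDE.

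\textbf{Case \(\lambda>0\).} Set \(w(s,x)=\exp(\lambda u_\nu(s,x))\). For \(s\in(q_0,q_1)\) we have \(w_s=\lambda u_s w\) and \(w_{xx}=(\lambda u_{xx}+\lambda^2u_x^2)w\). Hence
\[
w_s+\tfrac{\beta^2}{2}w_{xx}=\lambda w\Bigl(u_s+\tfrac{\beta^2}{2}\bigl(u_{xx}+\lambda u_x^2\bigr)\Bigr)=0 .
\]
The terminal datum \(w(q_1,\cdot)=e^{\lambda u_\nu(q_1,\cdot)}\) grows at most like \(e^{C(1+|x|)}\), because \(|u_x|\le1\) by \Cref{prop:PDE-regularity}. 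The backward heat equation therefore has the unique solution of at most exponential growth
\[
w(s,\cdot)=P_{\beta^2(q_1-s)}w(q_1,\cdot).
\]
Taking \(\frac1\lambda\log\) and letting \(s\downarrow q_0\) gives \eqref{eq:constant-mass-cole-hopf}.

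To make this rigorous without assuming that \(u_\nu\) is classical up to the endpoint, I would argue in the reverse direction. Define
\[
v(s,x)=T_{\lambda,\beta^2(q_1-s)}u_\nu(q_1,\cdot)(x),\qquad s\in[q_0,q_1].
\]
This \(v\) is smooth on \([q_0,q_1)\times\R\) and continuous up to \(s=q_1\). The computation above shows it solves the Parisi equation on \([q_0,q_1)\) with the same terminal datum. On this interval the Parisi PDE has a Lipschitz-in-\(u_x\) Hamiltonian and bounded-gradient data. Uniqueness of weak or viscosity solutions in the class with bounded \(u_x\), which is how \(u_\nu\) is defined in \cite{AuffingerChenUnique,JagannathTobasco2016}, then forces \(v=u_\nu\) on \([q_0,q_1]\). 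In particular \(v(q_0,\cdot)=u_\nu(q_0,\cdot)\).

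\textbf{Case \(\lambda=0\).} The equation is the linear backward heat equation \(u_s+\frac{\beta^2}{2}u_{xx}=0\). The datum \(u_\nu(q_1,\cdot)\) has linear growth, so the solution is given by the heat semigroup, which yields \eqref{eq:zero-mass-heat}. One can also view this as the \(\lambda\downarrow0\) limit of \(T_{\lambda,r}\).

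\textbf{Consequence.} Suppose \(\nu\) is piecewise constant in distribution function across finitely many intervals \([t_{k},t_{k+1})\). Apply the two cases successively from the top interval downward, each time using the profile just produced as the new terminal datum. This expresses the profile at the bottom as the stated composition of operators \(T_{\lambda,r}\) and \(P_r\).

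\textbf{Main obstacle.} The only delicate point is the uniqueness and regularity step. The Parisi solution for a general \(\nu\) is only a weak solution, since \(u_s\) exists only almost everywhere. One must check that \(u_\nu\) restricted to \([q_0,q_1]\) coincides with the classical Cole--Hopf solution. I would handle this through the standard fact that \(u_\nu\) is the continuous extension of the solutions for atomic measures, for which the Cole--Hopf formula is the definition. Concretely, approximate \(\nu\) by measures \(\nu_n\Rightarrow\nu\) that agree with \(\nu\) in distribution function on \([q_0,q_1)\). Then pass to the limit using \eqref{eq:uniform-derivative-convergence} together with continuity of \(T_{\lambda,r}\) under uniform convergence of data with bounded gradient.
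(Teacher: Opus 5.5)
Your proof is correct and follows the paper's argument. The substitution $\exp(\lambda u_\nu)$ turns the constant-coefficient Parisi equation on $(q_0,q_1)$ into the backward heat equation, its heat-semigroup representation followed by $\frac1\lambda\log$ gives \eqref{eq:constant-mass-cole-hopf}, and the case $\lambda=0$ is read off directly. Your extra justification of the representation step (approximating $\nu$ by finitely supported measures with the same distribution function on $[q_0,q_1)$, then using the case $j=0$ of \eqref{eq:uniform-derivative-convergence} together with the sup-norm nonexpansiveness of $T_{\lambda,r}$) fills in a uniqueness point that the paper's proof takes for granted.
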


\begin{proof}
Write
\[
 \phi(x)=u_\nu(q_1,x).
\]
On \((q_0,q_1)\), the Parisi PDE becomes
\begin{equation}\label{eq:constant-mass-pde}
 \partial_s u_\nu
 =
 -\frac{\beta^2}{2}
 \left(
 \partial_{xx}u_\nu
 +
 \lambda(\partial_xu_\nu)^2
 \right).
\end{equation}

Suppose first that \(\lambda=0\). Then \(u_\nu\) solves the backward
heat equation on \((q_0,q_1)\) with terminal value \(\phi\), and hence
\[
 u_\nu(s,\cdot)
 =
 P_{\beta^2(q_1-s)}\phi,
 \qquad q_0<s<q_1.
\]
Both sides extend continuously to \(s=q_0\), which gives
\eqref{eq:zero-mass-heat}.

Now suppose that \(\lambda>0\), and set
\[
 H(s,x)=\exp\bigl(\lambda u_\nu(s,x)\bigr).
\]
Using \eqref{eq:constant-mass-pde}, we obtain
\begin{align*}
 \partial_s H
 &=
 \lambda H\,\partial_su_\nu \\
 &=
 -\frac{\beta^2}{2}\lambda H
 \left(
 \partial_{xx}u_\nu
 +
 \lambda(\partial_xu_\nu)^2
 \right) \\
 &=
 -\frac{\beta^2}{2}\partial_{xx}H.
\end{align*}
Thus \(H\) solves the backward heat equation on \((q_0,q_1)\) with
terminal condition
\[
 H(q_1,x)=\exp(\lambda\phi(x)).
\]
The heat-semigroup representation therefore gives
\[
 H(s,x)
 =
 P_{\beta^2(q_1-s)}
 \bigl(\exp(\lambda\phi)\bigr)(x),
 \qquad q_0<s<q_1.
\]
Both sides extend continuously in \(s\) to \([q_0,q_1]\). Taking
logarithms and setting \(s=q_0\) proves
\eqref{eq:constant-mass-cole-hopf}.

Iterating the appropriate identity through finitely many
constant-mass intervals proves the final assertion.
\end{proof}

\subsection{Regularity and endpoint expansions}

We first record the analytic facts needed below.  Suppose an expansion has a remainder $R(x)=O(\e^{-\lambda x})$.  When we state that the expansion holds with differentiated remainder of every order, we mean that, for every integer $\ell\ge0$, there exist constants $C_\ell<\infty$ and $x_\ell<\infty$ such that
\[
 |\partial_x^\ell R(x)|\le C_\ell\e^{-\lambda x},
 \qquad x\ge x_\ell.
\]
The constants may depend on $\ell$.

\begin{lemma}\label{lem:analytic-class}
Let \(\phi\in C^\infty(\R)\) be even and strictly convex.  Assume that
\begin{equation}\label{eq:analytic-hypotheses}
 -1<\phi'(x)<1,
 \qquad
 |\phi(x)|\le C+|x|,
\end{equation}
that \(\phi^{(j)}\) is bounded for every \(j\ge1\), and that
\begin{equation}\label{eq:phi-tail}
 \phi(x)=x+c+d_1\e^{-2x}+d_2\e^{-4x}+d_3\e^{-6x}
 +O(\e^{-8x}),
 \qquad d_1>0,
\end{equation}
as \(x\to+\infty\), with differentiated remainder of every order.  Fix \(a>0\) and \(r\ge0\), and let
\[
 u(x)=T_{a,r}\phi(x).
\]
Then \(u\) is even, belongs to \(C^\infty(\R)\), is strictly convex, and satisfies
\begin{equation}\label{eq:u-slope-growth}
 -1<u'(x)<1,
 \qquad
 |u(x)|\le C_{a,r}+|x|.
\end{equation}
Every derivative \(u^{(j)}\), \(j\ge1\), is bounded.  Moreover, there exist constants
\(\widetilde c,\widetilde d_1,\widetilde d_2,\widetilde d_3\) such that
\begin{equation}\label{eq:u-tail}
 u(x)=x+\widetilde c+\widetilde d_1\e^{-2x}
 +\widetilde d_2\e^{-4x}+\widetilde d_3\e^{-6x}
 +O(\e^{-8x}),
\end{equation}
with differentiated remainder of every order, and
\begin{equation}\label{eq:d1-transform}
 \widetilde d_1=d_1\e^{2(1-a)r}>0.
\end{equation}

If \(u(r,x)=T_{a,r}\phi(x)\) and \(r\) is restricted to a compact interval, then the coefficients in \eqref{eq:u-tail} are smooth functions of \(r\).  For every \(\alpha,\ell\ge0\), the expansion obtained by applying \(\partial_r^\alpha\partial_x^\ell\) to \eqref{eq:u-tail} has a remainder that is uniform for \(r\) in that interval.
\end{lemma}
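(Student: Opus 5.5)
We will work directly with the representation
\[
 \e^{a u(x)}=P_r\bigl(\e^{a\phi}\bigr)(x)=\E\,\e^{a\phi(x+\sqrt r Z)} .
\]
The case \(r=0\) is trivial, so assume \(r>0\).

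\textbf{Step 1: elementary properties.} Evenness of \(u\) follows from evenness of \(\phi\) and the symmetry of \(Z\). Since \(|\phi(y)|\le C+|y|\), the integrand is dominated by \(\e^{a(C+|x|+\sqrt r|Z|)}\). This justifies differentiating under the expectation to all orders, locally uniformly in \(x\), so \(u\in C^\infty\).

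Introduce the tilted probability measure \(\Q_x\) with density proportional to \(\e^{a\phi(x+\sqrt rZ)}\), and write \(Y=x+\sqrt rZ\). Then
\[
 u'(x)=\E_{\Q_x}[\phi'(Y)],\qquad
 u''(x)=\E_{\Q_x}[\phi''(Y)]+a\Var_{\Q_x}(\phi'(Y)).
\]
The first identity gives \(|u'|<1\), strictly, because \(|\phi'|<1\) pointwise. The second gives \(u''>0\), since \(\phi''\ge0\) and \(\phi''\not\equiv0\) on a set of positive Gaussian mass. The growth bound \(|u|\le C_{a,r}+|x|\) follows from \(u(x)\le\frac1a\log\E\,\e^{a(C+|x|+\sqrt r|Z|)}\) and the analogous lower bound.

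\textbf{Step 2: bounded derivatives.} Higher derivatives \(u^{(j)}\) are polynomials in \(\Q_x\)-cumulants of the random variables \(\phi^{(k)}(Y)\), \(1\le k\le j\). Each such variable is bounded by hypothesis, so every \(u^{(j)}\), \(j\ge1\), is bounded.

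\textbf{Step 3: tail expansion (the main computation).} For \(x\to+\infty\), write
\[
 \e^{a\phi(y)}=\e^{a(y+c)}\bigl(1+ad_1\e^{-2y}+e_2\e^{-4y}+e_3\e^{-6y}+R(y)\bigr),\qquad y\ge y_0 ,
\]
where \(e_2,e_3\) are the coefficients produced by expanding the exponential, and \(R=O(\e^{-8y})\). Gaussian integration of the exponentials gives
\[
 \E\,\e^{a(Y+c)}\e^{-2kY}=\e^{a(x+c)}\e^{-2kx}\e^{r(a-2k)^2/2}.
\]
Factoring out the \(k=0\) term \(\e^{a(x+c)}\e^{ra^2/2}\), the \(k\)-th term acquires the factor \(\e^{-2kx}\e^{r(4k^2-4ka)/2}\). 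For \(k=1\) this factor is \(\e^{2r(1-a)}\e^{-2x}\).

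Taking \(\frac1a\log\) and expanding \(\log(1+\cdot)\) to third order gives
\[
 \widetilde c=c+\frac{ra}{2},\qquad \widetilde d_1=d_1\e^{2(1-a)r},
\]
together with explicit \(\widetilde d_2,\widetilde d_3\).

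The hardest part is controlling the error terms with differentiated remainders. Two contributions must be handled.

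First, the region \(y<y_0\) and the remainder region contribute \(\E\bigl[\e^{a(Y+c)}O(\e^{-8Y})\bigr]\). Here the key observation is that \(\E\,\e^{(a-8)Y}\1_{Y\ge y_0}\) is still \(O(\e^{(a-8)x})\) times a constant. This holds because a Gaussian shift of \(\e^{\lambda y}\) is exact, and truncation only reduces the integral, since the integrand is positive. The region \(y<y_0\) requires a different bound: there \(\e^{a\phi}\) is bounded by \(\e^{a(C+|y_0|)}\), and the Gaussian probability \(\P(Y<y_0)\) is \(O(\e^{-(x-y_0)^2/2r})\), which is superexponentially small. Thus both contributions are \(O(\e^{(a-8)x})\), i.e.\ \(O(\e^{-8x})\) relative to the main term.

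Second, for \(\partial_x^\ell\) of the remainder, write \(P_r g(x)=\int g(y)\varphi_r(x-y)\dd y\) and move the derivatives onto \(g\) via the translation structure \(\partial_xP_r=P_r\partial_y\). Since \(\partial_y^\ell R=O(\e^{-8y})\) by hypothesis, the same estimates apply to each differentiated piece. The logarithm of \(1+(\text{small})\) then preserves these bounds under differentiation by the chain rule.

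\textbf{Step 4: dependence on \(r\).} For \(r\) in a compact interval, the coefficients \(\widetilde c,\widetilde d_k\) are explicit exponentials and polynomials in \(r\), hence smooth. For \(\partial_r^\alpha\partial_x^\ell\) of the remainder, use the heat equation \(\partial_rP_r=\frac12\partial_x^2P_r\). This converts \(r\)-derivatives into extra \(x\)-derivatives, and all the constants in Step 3 depend continuously on \(r\), so the bounds are uniform over the compact interval.
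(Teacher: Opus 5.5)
Your proposal is correct and follows essentially the same route as the paper. Both use the tilted-measure formulas for $u'$ and $u''$, bounded derivatives via tilted moments, exact Gaussian integration of the exponential terms after splitting off a bad region, the identity $\partial_r^\alpha\partial_x^\ell P_r g=2^{-\alpha}P_r(g^{(2\alpha+\ell)})$ to reduce mixed derivatives to $x$-derivatives, and a third-order expansion of the logarithm. Your direct use of $\partial_xP_r=P_r\partial_y$ on the $r$-independent remainder function replaces the paper's conjugated-operator induction, which is a harmless simplification.

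One slip needs fixing. On $\{y<y_0\}$ the function $\e^{a\phi(y)}$ is not bounded by $\e^{a(C+|y_0|)}$, since $\phi(y)\to\infty$ as $y\to-\infty$. Use $\e^{a(C+|y|)}$ instead and complete the square. Treat the explicit terms $\e^{(a-2k)Y}\1_{\{Y<y_0\}}$ the same way, since you implicitly add them back when integrating the exponentials over all of $\R$. Both contributions are still superexponentially small in $x$, so the conclusion stands.
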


\begin{proof}
When \(r=0\), one has \(u=\phi\), so every assertion follows from the hypotheses.  Assume \(r>0\).  The growth bound in \eqref{eq:analytic-hypotheses} gives
\[
 \e^{a\phi(x+\sqrt r Z)}
 \le \e^{aC+a|x|+a\sqrt r\,|Z|},
\]
which is integrable.  For each \(n\ge1\), there is a polynomial \(P_n\), depending only on \(a\) and \(n\), such that
\[
 \partial_x^n\e^{a\phi(x)}
 =\e^{a\phi(x)}
 P_n\bigl(\phi'(x),\ldots,\phi^{(n)}(x)\bigr).
\]
The polynomial factor is bounded by the assumptions on \(\phi\).  Dominated convergence therefore permits differentiation of \(P_r(\e^{a\phi})(x)\) to every order.

For fixed \(x\), denote by \(\langle\cdot\rangle_x\) expectation with respect to the probability measure on \(Z\) whose density relative to \(\gamma\) is proportional to
\(\e^{a\phi(x+\sqrt r Z)}\).  Differentiating \(u=a^{-1}\log P_r(\e^{a\phi})\) gives
\begin{equation}\label{eq:u-derivatives}
 u_x=\langle\phi'\rangle_x,
 \qquad
 u_{xx}=\langle\phi''\rangle_x+a\Var_x(\phi').
\end{equation}
The first identity and \(-1<\phi'<1\) imply \(-1<u_x<1\).  Since \(\phi\) is convex, \(\phi''\ge0\).  It is not identically zero, because \(\phi\) is strictly convex.  The tilted Gaussian measure has a strictly positive density on \(\R\), and hence \(\langle\phi''\rangle_x>0\).  Thus \(u_{xx}>0\).  Evenness follows from the symmetry of \(\phi\) and of the Gaussian kernel.  The bound on \(u_x\) yields
\[
 |u(x)|\le |u(0)|+|x|.
\]

Let \(\mathcal Z=P_r(\e^{a\phi})\).  The argument above shows that \(\mathcal Z^{(j)}/\mathcal Z\) is bounded for every \(j\ge1\): it is a tilted expectation of a bounded polynomial in \(\phi',\ldots,\phi^{(j)}\).  Since \(\partial_x^n\log\mathcal Z\) is a polynomial in \(\mathcal Z'/\mathcal Z,\ldots,\mathcal Z^{(n)}/\mathcal Z\), every positive-order derivative of \(u\) is bounded.

We next prove \eqref{eq:u-tail}, including the asserted differentiated
remainder bounds.  Fix \(R<\infty\).  For each integer \(n\ge0\), set
\[
 g_n(y)=\partial_y^n\e^{a\phi(y)}.
\]
The expansion \eqref{eq:phi-tail}, together with its differentiated
remainder bounds, implies that, for every \(n\ge0\), there exist
constants \(b_{n,0},b_{n,1},b_{n,2},b_{n,3}\) and a smooth function
\(\mathcal R_n\) such that
\begin{equation}\label{eq:exp-derivative-tail}
 g_n(y)
 =
 \e^{a(y+c)}
 \left(
 b_{n,0}+b_{n,1}\e^{-2y}
 +b_{n,2}\e^{-4y}+b_{n,3}\e^{-6y}
 +\mathcal R_n(y)
 \right)
\end{equation}
and, for every integer \(j\ge0\),
\begin{equation}\label{eq:exp-derivative-remainder}
 \bigl|\partial_y^j\mathcal R_n(y)\bigr|
 \le C_{n,j}\e^{-8y}
\end{equation}
for all sufficiently large \(y\).

Indeed, write
\[
 D(y)=d_1\e^{-2y}+d_2\e^{-4y}+d_3\e^{-6y}
 +O(\e^{-8y}).
\]
Taylor's formula applied to \(\e^{aD(y)}\) gives an expansion through
order \(\e^{-6y}\) with remainder \(O(\e^{-8y})\).  The corresponding
differentiated remainder estimates follow from the differentiated
remainder estimates for \(D\), together with the chain rule and the
product rule.  Differentiating
\[
 \e^{a\phi(y)}=\e^{a(y+c)}\e^{aD(y)}
\]
then proves
\eqref{eq:exp-derivative-tail}--\eqref{eq:exp-derivative-remainder}.
In particular,
\begin{equation}\label{eq:b01-value}
 b_{0,0}=1,
 \qquad
 b_{0,1}=ad_1.
\end{equation}
More generally, the coefficients for different values of \(n\) are
related by
\begin{equation}\label{eq:bnj-identity}
 b_{n,j}=(a-2j)^n b_{0,j},
 \qquad n\ge0,\quad 0\le j\le3.
\end{equation}
Indeed, the case \(n=0\) of
\eqref{eq:exp-derivative-tail} gives
\[
 \e^{a\phi(y)}
 =
 \sum_{j=0}^3
 b_{0,j}\e^{ac}\e^{(a-2j)y}
 +\e^{a(y+c)}\mathcal R_0(y).
\]
Differentiating this identity \(n\) times and using
\eqref{eq:exp-derivative-remainder} shows that the coefficient of
\(\e^{ac}\e^{(a-2j)y}\) in \(g_n(y)\) is
\((a-2j)^n b_{0,j}\), which proves
\eqref{eq:bnj-identity}.

For integers \(\alpha,\ell\ge0\), the heat semigroup identity gives
\begin{equation}\label{eq:heat-derivative}
 \partial_r^\alpha\partial_x^\ell
 P_r(\e^{a\phi})(x)
 =
 2^{-\alpha}P_r(g_{2\alpha+\ell})(x),
 \qquad r\ge0.
\end{equation}
At \(r=0\), derivatives in \(r\) are understood as right derivatives.
The identity there follows by iterating
\[
 \partial_rP_rg=\frac12P_r(g'').
\]

Let
\[
 Y=x+\sqrt r\,Z.
\]
We estimate the expectation on the right side of
\eqref{eq:heat-derivative} uniformly for \(0\le r\le R\).  On the event
\(Y\ge x/2\), the remainder in
\eqref{eq:exp-derivative-tail} satisfies
\[
 \e^{a(Y+c)}|\mathcal R_n(Y)|
 \le C_n\e^{ac}\e^{(a-8)Y}.
\]
Consequently,
\begin{align}
 \E\left[
 \e^{a(Y+c)}|\mathcal R_n(Y)|
 \1_{\{Y\ge x/2\}}
 \right]
 &\le
 C_n\e^{ac}\E\e^{(a-8)Y}\notag\\
 &=
 C_n\e^{ac+(a-8)x+(a-8)^2r/2}\notag\\
 &=
 \e^{a(x+c)+a^2r/2}O_{n,R}(\e^{-8x}).
 \label{eq:good-event-remainder}
\end{align}

On the complementary event, the growth assumptions on \(\phi\) and
its derivatives imply
\[
 |g_n(Y)|\le C_n\e^{aC+a|Y|}.
\]
Completion of the square therefore gives, for every \(N\ge1\),
\begin{equation}\label{eq:bad-tail}
 \E\left[
 |g_n(Y)|\1_{\{Y<x/2\}}
 \right]
 =O_{n,N,R}(\e^{-Nx}).
\end{equation}
The same estimate holds with \(g_n(Y)\) replaced by any of the finitely
many functions
\[
 \e^{(a-2j)Y},
 \qquad 0\le j\le3.
\]
Hence the indicator of the event \(Y\ge x/2\) may be removed from every
explicit term in \eqref{eq:exp-derivative-tail}, with an error of
\(O_{n,N,R}(\e^{-Nx})\).

Combining
\eqref{eq:exp-derivative-tail}--\eqref{eq:bad-tail}, we obtain
\begin{align}
 P_r(g_n)(x)
 ={}&
 \e^{a(x+c)+a^2r/2}
 \bigl(
 \beta_{n,0}(r)
 +\beta_{n,1}(r)\e^{-2x}
 +\beta_{n,2}(r)\e^{-4x}
 \label{eq:gn-heat-expansion}\\
 &\hspace{43mm}
 +\beta_{n,3}(r)\e^{-6x}
 +O_{n,R}(\e^{-8x})
 \bigr),\notag
\end{align}
uniformly for \(0\le r\le R\), where
\begin{equation}\label{eq:beta-nj}
 \beta_{n,j}(r)
 =
 b_{n,j}
 \exp\left(
 \frac{(a-2j)^2-a^2}{2}r
 \right).
\end{equation}

Taking \(n=0\) gives
\begin{align}
 P_r(\e^{a\phi})(x)
 ={}&
 \e^{a(x+c)+a^2r/2}
 \bigl(
 1+\alpha_1(r)\e^{-2x}
 +\alpha_2(r)\e^{-4x}
 \label{eq:heat-tail-expansion}\\
 &\hspace{43mm}
 +\alpha_3(r)\e^{-6x}
 +\mathcal R(r,x)
 \bigr),\notag
\end{align}
where, for every \(\alpha,\ell\ge0\),
\begin{equation}\label{eq:heat-remainder-mixed}
 \left|
 \partial_r^\alpha\partial_x^\ell\mathcal R(r,x)
 \right|
 \le C_{\alpha,\ell,R}\e^{-8x},
 \qquad 0\le r\le R,
\end{equation}
for all sufficiently large \(x\).

To prove \eqref{eq:heat-remainder-mixed}, put
\[
 Q(r,x)=1+\alpha_1(r)\e^{-2x}+\alpha_2(r)\e^{-4x}
 +\alpha_3(r)\e^{-6x}+\mathcal R(r,x).
\]
Since
\[
 P_r(\e^{a\phi})(x)=\e^{a(x+c)+a^2r/2}Q(r,x),
\]
we have
\begin{equation}\label{eq:conjugated-operator-identity}
 \e^{-a(x+c)-a^2r/2}
 \partial_r^\alpha\partial_x^\ell P_r(\e^{a\phi})(x)
 =
 \left(\partial_r+\frac{a^2}{2}\right)^\alpha
 (\partial_x+a)^\ell Q(r,x).
\end{equation}
Apply \eqref{eq:gn-heat-expansion} with \(n=2\alpha+\ell\) in
\eqref{eq:heat-derivative}.  By
\eqref{eq:bnj-identity}--\eqref{eq:beta-nj}, the coefficient of
\(\e^{-2jx}\) in the normalized expansion is
\[
 2^{-\alpha}(a-2j)^{2\alpha+\ell}b_{0,j}
 \exp\left(\frac{(a-2j)^2-a^2}{2}r\right),
\]
which is exactly the coefficient obtained by applying the operator on the
right side of \eqref{eq:conjugated-operator-identity} to the \(j\)th
explicit term of \(Q\).  Hence
\begin{equation}\label{eq:conjugated-remainder-bound}
 \left|
 \left(\partial_r+\frac{a^2}{2}\right)^\alpha
 (\partial_x+a)^\ell\mathcal R(r,x)
 \right|
 \le C_{\alpha,\ell,R}\e^{-8x}.
\end{equation}
The operator on the left has the triangular expansion
\[
 \left(\partial_r+\frac{a^2}{2}\right)^\alpha
 (\partial_x+a)^\ell
 =
 \partial_r^\alpha\partial_x^\ell
 +\sum_{i+j<\alpha+\ell}
 c_{ij}\partial_r^i\partial_x^j.
\]
Induction on \(\alpha+\ell\), beginning with
\eqref{eq:conjugated-remainder-bound} for \(\alpha=\ell=0\), therefore
converts \eqref{eq:conjugated-remainder-bound} into the raw estimate
\eqref{eq:heat-remainder-mixed}.  This argument also includes right
\(r\)-derivatives at \(r=0\), so the estimates are uniform on
\([0,R]\).

It remains to check that taking the logarithm preserves the same remainder
class.  Write \(E=\e^{-2x}\) and
\[
 S=\alpha_1E+\alpha_2E^2+\alpha_3E^3+\mathcal R.
\]
For large \(x\), uniformly in \(0\le r\le R\), one has \(|S|\le1/2\), and
Taylor's formula gives
\[
 \log(1+S)=S-\frac12S^2+\frac13S^3+S^4h(S),
\]
where \(h\) is smooth on \([-1/2,1/2]\).  By the Leibniz and chain rules,
every term containing \(\mathcal R\), and every term of total degree at
least four in \(E\), has all mixed derivatives bounded by
\(C_{\alpha,\ell,R}E^4\).  Consequently,
\begin{align*}
 \frac1a\log(1+S)
 ={}&\frac{\alpha_1}{a}E
 +\frac1a\left(\alpha_2-\frac12\alpha_1^2\right)E^2\\
 &+\frac1a\left(\alpha_3-\alpha_1\alpha_2
 +\frac13\alpha_1^3\right)E^3
 +O_R(E^4)
\end{align*}
with all mixed differentiated remainder bounds.  Combining this with the
prefactor in \eqref{eq:heat-tail-expansion} proves
\[
 u(r,x)
 =x+\widetilde c(r)+\widetilde d_1(r)\e^{-2x}
 +\widetilde d_2(r)\e^{-4x}+\widetilde d_3(r)\e^{-6x}
 +O_R(\e^{-8x}),
\]
and all coefficients are smooth in \(r\).

By \eqref{eq:b01-value} and \eqref{eq:beta-nj},
\begin{align*}
 \alpha_1(r)
 &=
 ad_1
 \frac{\E\e^{(a-2)\sqrt r Z}}
 {\E\e^{a\sqrt r Z}}\\
 &=
 ad_1\e^{2(1-a)r}.
\end{align*}
After taking the logarithm and dividing by \(a\), the coefficient of
\(\e^{-2x}\) is therefore
\[
 \widetilde d_1(r)
 =
 d_1\e^{2(1-a)r}>0.
\]
This proves \eqref{eq:u-tail}, \eqref{eq:d1-transform}, and all the
uniform differentiated remainder statements.
\end{proof}

\begin{corollary}\label{cor:finite-iterate}
Every function obtained from $\log\cosh$ by finitely many transforms $T_{a,r}$ is even, smooth, and strictly convex, has all positive-order derivatives bounded, and satisfies
\begin{equation}\label{eq:iterate-slope-growth}
 -1<\psi'(x)<1,
 \qquad
 |\psi(x)|\le C+|x|.
\end{equation}
It also has an expansion of the form \eqref{eq:phi-tail} with positive leading coefficient.
\end{corollary}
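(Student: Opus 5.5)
The plan is induction on the number of transforms, with \Cref{lem:analytic-class} as the inductive step. The real work is the base case: checking that \(\phi_0=\log\cosh\) satisfies every hypothesis of that lemma. Once this is done, suppose \(\psi\) is obtained from \(\log\cosh\) by \(k\) transforms and satisfies the conclusions. Then \(\psi\) is even, smooth and strictly convex, satisfies \eqref{eq:analytic-hypotheses}, has bounded derivatives of every positive order, and has an expansion \eqref{eq:phi-tail} with \(d_1>0\) and differentiated remainder of every order. These are exactly the hypotheses of \Cref{lem:analytic-class}. Hence \(T_{a,r}\psi\) inherits all of them, for any \(a>0\) and \(r\ge0\). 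Evenness, smoothness, strict convexity, the bounds \eqref{eq:u-slope-growth} (which are \eqref{eq:iterate-slope-growth}) and bounded derivatives come from the first part of the lemma. The expansion \eqref{eq:u-tail} with differentiated remainder comes from the second part, and positivity of its leading coefficient from \eqref{eq:d1-transform}.

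For the base case, \(\log\cosh\) is even and smooth, and \((\log\cosh)''=\sech^2>0\), so it is strictly convex. Its derivative \(\tanh x\) lies in \((-1,1)\), and \(|\log\cosh x|\le|x|\), which gives \eqref{eq:analytic-hypotheses} with \(C=0\). Every derivative of order \(j\ge1\) is a polynomial in \(\tanh\), hence bounded.

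For the tail expansion, write, for \(x>0\),
\[
 \log\cosh x=x-\log2+\log\bigl(1+\e^{-2x}\bigr)
 =x-\log2+\e^{-2x}-\tfrac12\e^{-4x}+\tfrac13\e^{-6x}+R(x).
\]
This is \eqref{eq:phi-tail} with \(c=-\log2\), \(d_1=1>0\), \(d_2=-\tfrac12\) and \(d_3=\tfrac13\). The remainder is \(R(x)=h(\e^{-2x})\), where \(h(t)=\log(1+t)-t+t^2/2-t^3/3\) is analytic near \(0\) with \(h(t)=O(t^4)\). By the chain rule, \(\partial_x^\ell R\) is a finite sum of terms of the form \(h^{(m)}(\e^{-2x})\) times products of derivatives of \(\e^{-2x}\). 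Each \(x\)-derivative of \(\e^{-2x}\) contributes a factor \(\e^{-2x}\), and \(h^{(m)}(t)=O(t^{4-m})\). So every such term is \(O(\e^{-8x})\), which gives the differentiated remainder bounds.

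No step is a genuine obstacle, since the lemma already contains all the analysis. The only points needing care are, first, the differentiated remainder for \(\log\cosh\), handled by the power-series argument above. Second, the conclusion must be read with its quantifiers: the constant \(C\) in \eqref{eq:iterate-slope-growth} and the expansion coefficients depend on the particular composition, and the lemma supplies such constants at each step, so the induction closes.
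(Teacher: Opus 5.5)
Your proposal is correct and follows the paper's proof. The paper likewise writes $\log\cosh x=x-\log2+\log(1+\e^{-2x})$ to get an expansion with leading coefficient $1$, then applies \Cref{lem:analytic-class} successively, with \eqref{eq:d1-transform} preserving positivity of the leading coefficient; your explicit base-case checks (derivatives as polynomials in $\tanh$, and the differentiated remainder via $h(\e^{-2x})$) simply fill in details the paper leaves implicit.
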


\begin{proof}
The identity
\[
 \log\cosh x=x-\log2+\log(1+\e^{-2x})
\]
gives
\begin{equation}\label{eq:logcosh-tail}
 \log\cosh x=x-\log2+\e^{-2x}-\frac12\e^{-4x}
 +\frac13\e^{-6x}+O(\e^{-8x})
\end{equation}
as \(x\to+\infty\), with differentiated remainder of every order.  The coefficient of \(\e^{-2x}\) is \(1\).  Apply \Cref{lem:analytic-class} successively to the finitely many transforms.  At each step, \eqref{eq:d1-transform} preserves positivity of this coefficient, and the other conclusions of the lemma give the asserted regularity and growth bounds.
\end{proof}

\subsection{Differential inequalities in slope coordinates}

Fix \(a>0\) and a function \(\phi\) satisfying the conclusions of
\Cref{cor:finite-iterate}.  Define
\begin{equation}\label{eq:forward-flow}
 u(r,x)=T_{a,r}\phi(x),
 \qquad r\ge0.
\end{equation}
Then \(u\) solves
\begin{equation}\label{eq:CH-PDE}
 u_r=\frac12\bigl(u_{xx}+a u_x^2\bigr).
\end{equation}
For fixed \(r\), set
\begin{equation}\label{eq:B-C-def}
 B=u_x(r,x),
 \qquad
 C=u_{xx}(r,x).
\end{equation}
By \Cref{lem:analytic-class}, \(C>0\) and \(u_x(r,x)\to\pm1\) as
\(x\to\pm\infty\).  Hence \(x\mapsto B\) is a smooth increasing bijection
from \(\R\) onto \((-1,1)\).  We write \(x=x(r,B)\) and
\(C=C(r,B)\).  Since \(u\) is even, \(C(r,\cdot)\) is even.

\begin{lemma}\label{lem:slope-endpoint-expansion}
Fix \(R<\infty\).  There exist smooth functions \(c_2,c_3\) on
\([0,R]\) and a function \(\mathcal R_{\mathrm{end}}\) such that, with
\(\delta=1-B\),
\begin{equation}\label{eq:C-delta-expansion}
 C(r,B)
 =2\delta+c_2(r)\delta^2+c_3(r)\delta^3+\mathcal R_{\mathrm{end}}(r,B)
\end{equation}
for \(0\le r\le R\) and \(B\) sufficiently close to \(1\).  Moreover,
for \(\alpha\in\{0,1\}\) and \(0\le\ell\le3\),
\begin{equation}\label{eq:C-delta-remainder}
 \bigl|
 \partial_r^\alpha\partial_B^\ell\mathcal R_{\mathrm{end}}(r,B)
 \bigr|
 \le C_{\alpha,\ell,R}\delta^{4-\ell}.
\end{equation}
The analogous statements at \(B=-1\) follow by evenness.
\end{lemma}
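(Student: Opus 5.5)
The plan is to derive the expansion of \(C\) in terms of \(\delta=1-B\) directly from the tail expansion \eqref{eq:u-tail} of \Cref{lem:analytic-class}, applied with \(\phi\) and the forward flow \eqref{eq:forward-flow}. The coefficients \(\widetilde c,\widetilde d_1,\widetilde d_2,\widetilde d_3\) are smooth in \(r\in[0,R]\), and the remainder bounds are uniform under all mixed derivatives. Set \(E=\e^{-2x}\). Differentiating \eqref{eq:u-tail} gives
\[
 B=u_x=1-2\widetilde d_1E-4\widetilde d_2E^2-6\widetilde d_3E^3+O(E^4),
\qquad
 C=u_{xx}=4\widetilde d_1E+16\widetilde d_2E^2+36\widetilde d_3E^3+O(E^4).
\]
In both expansions the remainders stay \(O(E^4)\) after applying \(\partial_r^\alpha\partial_x^\ell\), uniformly in \(r\). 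Hence \(\delta=2\widetilde d_1E+4\widetilde d_2E^2+6\widetilde d_3E^3+O(E^4)\). Since \(\widetilde d_1(r)>0\) is smooth on the compact interval \([0,R]\), it is bounded below by a positive constant. So \(\delta\asymp E\) uniformly for large \(x\), and \(\delta\) and \(E\) are comparable small parameters.

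Next invert. Regard \(F(r,E)=2\widetilde d_1E+4\widetilde d_2E^2+6\widetilde d_3E^3+\mathcal R_1(r,E)\) as a function of \((r,E)\) with \(E\in(0,E_0]\). The remainder \(\mathcal R_1\) satisfies \(|\partial_r^\alpha\partial_E^k\mathcal R_1|\lesssim E^{4-k}\). To see this, use \(\partial_x=-2E\partial_E\): bounds of the form \(|\partial_x^\ell\mathcal R|\lesssim E^4\) for all \(\ell\) translate into \(|(E\partial_E)^\ell\mathcal R|\lesssim E^4\), and hence into \(|\partial_E^k\mathcal R|\lesssim E^{4-k}\). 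Because \(\partial_EF\approx 2\widetilde d_1>0\), the inverse \(E=E(r,\delta)\) is well defined. It has the form \(E=e_1\delta+e_2\delta^2+e_3\delta^3+\mathcal R_2\), where the \(e_i\) are smooth in \(r\) and are obtained by formal series reversion, with \(e_1=1/(2\widetilde d_1)\). The remainder satisfies \(|\partial_r^\alpha\partial_\delta^k\mathcal R_2|\lesssim\delta^{4-k}\). This follows from the implicit function formulas \(\partial_\delta E=1/\partial_EF\) and \(\partial_rE=-\partial_rF/\partial_EF\), differentiated repeatedly, together with subtraction of the polynomial truncation.

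Substituting into \(C=4\widetilde d_1E+\cdots\) gives \(C=4\widetilde d_1e_1\delta+c_2\delta^2+c_3\delta^3+\mathcal R_{\mathrm{end}}\). The leading coefficient is \(4\widetilde d_1/(2\widetilde d_1)=2\), independently of \(r\), which is the claimed \(2\delta\). Finally, \(\partial_B=-\partial_\delta\), so the bounds \eqref{eq:C-delta-remainder} for \(\alpha\le1\) and \(\ell\le3\) are the \(\delta\)-derivative bounds just obtained. The case \(B\to-1\) follows from evenness of \(C(r,\cdot)\).

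The main obstacle is the remainder bookkeeping in the inversion. The key point is that remainders which are \(O(E^4)\) under arbitrary \(x\)-derivatives give \(O(\delta^{4-k})\) bounds under \(k\) derivatives in \(\delta\), with mixed \(r\)-derivatives included and uniformity over \([0,R]\). I would handle this with a single lemma on the class of functions \(f(r,E)\) satisfying \(|\partial_r^\alpha(E\partial_E)^\ell f|\lesssim E^4\). I would show this class is closed under products with smooth polynomials in \(E\), under composition with the near-identity change of variables \(E\mapsto\delta\), and under the implicit-function inversion. I would then read off all the required estimates from this lemma.
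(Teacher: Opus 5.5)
Your proposal is correct and follows the paper's route. Like the paper, you expand \(1-u_x\) and \(u_{xx}\) in \(v=\e^{-2x}\), convert the \(x\)-derivative remainder bounds using \(v\partial_v=-\tfrac12\partial_x\), invert with a cubic approximate inverse from series reversion, substitute into \(C\), and obtain the leading coefficient \(4d_1/(2d_1)=2\).

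The difference is only in how the remainders are tracked. The paper does it by hand through the identity \(A(V-V_0)=-E_0\). It checks explicitly that the singular term \(F_{vvvvv}\,\partial_rH_\theta\,(\partial_\delta H_\theta)^3\) is bounded, because \(\partial_rV=O(\delta)\) follows from \(F(r,0)=0\). Your Euler-derivative class lemma absorbs the same compensation, but only if you build \(\partial_rE(r,\delta)=O(\delta)\) into it. One way is to write your inverse as \(E=\delta\,b(r,\delta)\), with \(b\) bounded below and all \(\partial_r^\alpha(\delta\partial_\delta)^k b\) bounded.
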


\begin{proof}
Write \(v=\e^{-2x}\).  By \Cref{lem:analytic-class}, uniformly for
\(0\le r\le R\),
\begin{align}
 1-u_x(r,x)
 &=2d_1(r)v+4d_2(r)v^2+6d_3(r)v^3+\mathcal T_1(r,v),
 \label{eq:delta-v-expansion}\\
 u_{xx}(r,x)
 &=4d_1(r)v+16d_2(r)v^2+36d_3(r)v^3+\mathcal T_2(r,v),
 \label{eq:C-v-expansion}
\end{align}
where \(d_1(r)>0\).  For every integer \(\ell\ge0\),
\[
 v^\ell\partial_v^\ell
 =\left(-\frac12\right)^\ell
 \prod_{j=0}^{\ell-1}(\partial_x+2j)
\]
on functions of \(v=\e^{-2x}\), with the empty product interpreted as
the identity.  The differentiated remainder statement in
\Cref{lem:analytic-class} therefore gives
\begin{equation}\label{eq:v-remainder-bounds}
 \bigl|
 \partial_r^\alpha\partial_v^\ell\mathcal T_i(r,v)
 \bigr|
 \le C_{\alpha,\ell,R}v^{4-\ell},
 \qquad
 \alpha\in\{0,1\},\quad 0\le\ell\le5.
\end{equation}
Since \(d_1\) is continuous and positive on \([0,R]\), there is
\(c_R>0\) such that
\[
 2d_1(r)\ge c_R,
 \qquad 0\le r\le R.
\]
Let \(F(r,v)\) denote the right side of
\eqref{eq:delta-v-expansion}.  For \(v\) sufficiently small, uniformly
in \(r\in[0,R]\), one has \(F_v(r,v)\ge c_R/2\).  Hence
\(\delta=F(r,v)\) has a unique inverse \(v=V(r,\delta)\).

Choose \(b_1,b_2,b_3\) successively so that
\[
 V_0(r,\delta)
 =b_1(r)\delta+b_2(r)\delta^2+b_3(r)\delta^3
\]
cancels the coefficients of
\(\delta,\delta^2,\delta^3\) in
\(F(r,V_0(r,\delta))-\delta\).  In particular,
\(b_1=(2d_1)^{-1}\), and all three coefficients are smooth in \(r\).
If
\[
 E_0(r,\delta)=F(r,V_0(r,\delta))-\delta,
\]
then \eqref{eq:v-remainder-bounds} gives
\begin{equation}\label{eq:approximate-inverse-error}
 \bigl|
 \partial_r^\alpha\partial_\delta^\ell E_0(r,\delta)
 \bigr|
 \le C_{\alpha,\ell,R}\delta^{4-\ell},
 \qquad
 \alpha\in\{0,1\},\quad 0\le\ell\le3.
\end{equation}
After decreasing the upper bound on \(\delta\), if necessary, the
bounds on \(F_v\) imply
\[
 V(r,\delta)\asymp_R\delta,
 \qquad
 V_0(r,\delta)\asymp_R\delta.
\]
Implicit differentiation of \(F(r,V(r,\delta))=\delta\) gives
\[
 V_\delta=\frac1{F_v(r,V)},
 \qquad
 V_r=-\frac{F_r(r,V)}{F_v(r,V)}.
\]
Since \(F(r,0)=0\), one has \(F_r(r,0)=0\), and hence
\(F_r(r,v)=O_R(v)\).  Successive differentiation of these identities,
using \(F_v\ge c_R/2\) and \eqref{eq:v-remainder-bounds}, therefore gives
\begin{equation}\label{eq:inverse-derivative-bounds}
 \bigl|
 \partial_r^\alpha\partial_\delta^\ell V(r,\delta)
 \bigr|
 \le C_{\alpha,\ell,R},
 \qquad
 \alpha\in\{0,1\},\quad 0\le\ell\le3,
\end{equation}
together with the sharper bound \(V_r=O_R(\delta)\).  The same bounds
hold for \(V_0\), with \((V_0)_r=O_R(\delta)\).

Put \(Q=V-V_0\).  The mean value formula gives
\[
 A(r,\delta)Q(r,\delta)=-E_0(r,\delta),
\]
where
\[
 A(r,\delta)
 =
 \int_0^1
 F_v\bigl(r,V_0(r,\delta)+\theta Q(r,\delta)\bigr)\dd\theta.
\]
Here \(A\ge c_R/2\).  Moreover, the chain rule,
\eqref{eq:v-remainder-bounds}, and
\eqref{eq:inverse-derivative-bounds} show that
\[
 \bigl|
 \partial_r^\alpha\partial_\delta^\ell A^{-1}(r,\delta)
 \bigr|
 \le C_{\alpha,\ell,R},
 \qquad
 \alpha\in\{0,1\},\quad 0\le\ell\le3.
\]
To justify the preceding bound, put
\[
    H_\theta(r,\delta)
    =V_0(r,\delta)+\theta Q(r,\delta),
    \qquad 0\le\theta\le1.
\]
Then \(H_\theta\asymp_R\delta\), uniformly in \(\theta\), and
\[
    \partial_r H_\theta=O_R(\delta),
    \qquad
    \bigl|
    \partial_r^\alpha\partial_\delta^\ell H_\theta
    \bigr|
    \le C_{\alpha,\ell,R},
    \qquad
    \alpha\in\{0,1\},\quad 0\le\ell\le3.
\]
The chain rule and \eqref{eq:v-remainder-bounds} therefore show that
all mixed derivatives of
\[
    A(r,\delta)
    =\int_0^1F_v(r,H_\theta(r,\delta))\dd\theta
\]
of the indicated orders are bounded.  The only term that is not
immediately bounded is a constant multiple of
\[
    F_{vvvvv}(r,H_\theta)
    (\partial_rH_\theta)(\partial_\delta H_\theta)^3.
\]
It is bounded because
\[
    F_{vvvvv}(r,H_\theta)=O_R(H_\theta^{-1})
    =O_R(\delta^{-1}),
    \qquad
    \partial_rH_\theta=O_R(\delta).
\]
Since \(A\ge c_R/2\), differentiation of \(A^{-1}\) now gives
\[
 \bigl|
 \partial_r^\alpha\partial_\delta^\ell A^{-1}(r,\delta)
 \bigr|
 \le C_{\alpha,\ell,R},
 \qquad
 \alpha\in\{0,1\},\quad 0\le\ell\le3.
\]

Since \(Q=-A^{-1}E_0\), the Leibniz rule and
\eqref{eq:approximate-inverse-error} yield
\begin{equation}\label{eq:inverse-remainder}
 \bigl|
 \partial_r^\alpha\partial_\delta^\ell
 \bigl(V(r,\delta)-V_0(r,\delta)\bigr)
 \bigr|
 \le C_{\alpha,\ell,R}\delta^{4-\ell},
 \qquad
 \alpha\in\{0,1\},\quad 0\le\ell\le3.
\end{equation}

For completeness, let \(G(r,v)\) denote the right-hand side of
\eqref{eq:C-v-expansion}, and let
\[
    P_3(r,\delta)
    =2\delta+c_2(r)\delta^2+c_3(r)\delta^3
\]
be the polynomial obtained by retaining the terms through degree three
in \(G(r,V_0(r,\delta))\).  The coefficients \(c_2,c_3\) are smooth in
\(r\), and \eqref{eq:v-remainder-bounds} gives
\[
 \bigl|
 \partial_r^\alpha\partial_\delta^\ell
 \bigl(G(r,V_0(r,\delta))-P_3(r,\delta)\bigr)
 \bigr|
 \le C_{\alpha,\ell,R}\delta^{4-\ell}.
\]
Moreover,
\[
 G(r,V(r,\delta))-G(r,V_0(r,\delta))
 =
 Q(r,\delta)
 \int_0^1G_v(r,H_\theta(r,\delta))\dd\theta.
\]
The same chain-rule argument used for \(A\) shows that the integral
factor and all its mixed derivatives with
\(\alpha\in\{0,1\}\) and \(0\le\ell\le3\) are bounded.  Combining this
with \eqref{eq:inverse-remainder} therefore gives
\[
 \bigl|
 \partial_r^\alpha\partial_\delta^\ell
 \bigl(G(r,V(r,\delta))-P_3(r,\delta)\bigr)
 \bigr|
 \le C_{\alpha,\ell,R}\delta^{4-\ell}.
\]
This is \eqref{eq:C-delta-expansion}--\eqref{eq:C-delta-remainder}.
Finally, the linear coefficient of \(P_3\) is
\[
    \frac{4d_1(r)}{2d_1(r)}=2.
\]
\end{proof}

The next proposition identifies a collection of sign inequalities for
the curvature \(C\), viewed as a function of the slope variable \(B\),
and shows that these inequalities are preserved under Cole--Hopf
evolution and when the active parameter is decreased. To formulate
these inequalities, we introduce the following combinations of the
curvature and its derivatives.
For \(B\ne0\), define
\begin{equation}\label{eq:A-K-J}
 A=-\frac{C_B}{2B},
 \qquad
 K=A-a,
 \qquad
 J=-a-\frac12C_{BB}.
\end{equation}
The evenness and smoothness of \(C\) imply that \(C_B/B\) extends smoothly
through \(B=0\), so \(A\), \(K\), and \(J\) are well-defined and smooth
there.  Differentiating \eqref{eq:A-K-J} gives
\begin{equation}\label{eq:K-J-identities}
 C_B=-2(a+K)B,
 \qquad
 J=K+BK_B=(BK)_B.
\end{equation}

We use the following comparison principle in the proof of the proposition.  The diffusion coefficient is
allowed to vanish at the endpoints of the spatial interval.

\begin{lemma}\label{lem:comparison}
Let \(v\in C([0,R]\times[\alpha,\beta])\cap
C^{1,2}((0,R]\times(\alpha,\beta))\) satisfy
\begin{equation}\label{eq:comparison-PDE}
 v_r=dv_{BB}+bv_B+cv+F
\end{equation}
on \((0,R]\times(\alpha,\beta)\).  Assume that \(d>0\) in this open
strip, that \(c\) is bounded above, and that \(d,b,c,F\) are continuous
on compact subsets of the open strip.  If \(F\ge0\) and
\[
 v\ge0
 \quad\text{on}\quad
 \bigl(\{0\}\times[\alpha,\beta]\bigr)
 \cup\bigl([0,R]\times\{\alpha,\beta\}\bigr),
\]
then \(v\ge0\) on \([0,R]\times[\alpha,\beta]\).  The conclusion with all
inequalities reversed also holds.
\end{lemma}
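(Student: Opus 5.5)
This is a weak maximum principle for a parabolic equation whose diffusion may degenerate at the lateral boundary. Since the boundary values are imposed directly on \(\{B=\alpha,\beta\}\), the degeneracy plays no role. I would use the standard exponential-shift and first-violation argument, applied only at interior points, where \(d>0\) and the coefficients are continuous.

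\textbf{Step 1: shift the zeroth-order term.} Let \(c\le c_0\) on the open strip and set \(w=\e^{-\lambda r}v\) with \(\lambda>c_0\). Then
\[
 w_r=dw_{BB}+bw_B+(c-\lambda)w+\e^{-\lambda r}F,
\]
where \(c-\lambda<0\) and \(\e^{-\lambda r}F\ge0\). The boundary conditions are unchanged. It therefore suffices to show \(w\ge0\).

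\textbf{Step 2: perturb to make the inequality strict.} For \(\varepsilon>0\), let \(w^\varepsilon=w+\varepsilon(1+r)\). Then
\[
 w^\varepsilon_r-dw^\varepsilon_{BB}-bw^\varepsilon_B-(c-\lambda)w^\varepsilon
 =\e^{-\lambda r}F+\varepsilon-(c-\lambda)\varepsilon(1+r)>0,
\]
and \(w^\varepsilon>0\) on the parabolic boundary. Suppose \(w^\varepsilon\) is somewhere negative. Since it is continuous on the compact rectangle, the first time \(r^*=\inf\{r:\min_B w^\varepsilon(r,\cdot)\le 0\}\) is well defined and positive. At that time there is a point \(B^*\) with \(w^\varepsilon(r^*,B^*)=0\) and \(w^\varepsilon\ge0\) on \([0,r^*]\times[\alpha,\beta]\).

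\textbf{Step 3: locate the touching point and conclude.} The boundary positivity forces \(B^*\in(\alpha,\beta)\) and \(r^*\in(0,R]\), so \((r^*,B^*)\) lies in the open strip. There \(w^\varepsilon\) is \(C^{1,2}\) and the coefficients are finite, with \(d>0\). At the touching point, \(w^\varepsilon_B=0\) and \(w^\varepsilon_{BB}\ge0\) because \(B^*\) is a spatial minimum. Also \(w^\varepsilon_r\le0\), using the one-sided difference quotient from the left at \(r^*\); this is valid even when \(r^*=R\). Hence the left side of the strict inequality in Step 2 is at most
\[
 -d\,w^\varepsilon_{BB}-(c-\lambda)\cdot0\le0,
\]
which contradicts strict positivity. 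So \(w^\varepsilon\ge0\) for every \(\varepsilon>0\). Letting \(\varepsilon\to0\) gives \(w\ge0\), and therefore \(v\ge0\).

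The reversed statement follows by applying this result to \(-v\), with \(F\) replaced by \(-F\).

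The only delicate point is that the coefficients \(d,b,c\) may blow up or degenerate near \(B=\alpha,\beta\). Because the argument evaluates the equation only at the interior touching point, and the boundary data handle the lateral edges, no regularity of the coefficients up to the boundary is needed. Only the upper bound on \(c\) is used globally, in Step 1.
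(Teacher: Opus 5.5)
Your proof is correct and follows essentially the same route as the paper: an exponential shift $\e^{-\lambda r}v$ with $\lambda>\sup c$, then evaluating the equation at an interior extremal point of the interior strip, where $w_B=0$, $w_{BB}\ge0$ and $w_r\le0$ (using the left derivative at $r=R$). The only difference is cosmetic. The paper obtains strictness directly from $(c-\lambda)\widetilde v>0$ at a negative minimum, so your $\varepsilon(1+r)$ perturbation and first-touching-time argument are valid but unnecessary.
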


\begin{proof}
Choose \(\lambda>\sup c\) and set
\(\widetilde v(r,B)=\e^{-\lambda r}v(r,B)\).  Suppose that
\(\widetilde v(r_1,B_1)<0\) for some \((r_1,B_1)\).  The minimum of
\(\widetilde v\) on
\([0,r_1]\times[\alpha,\beta]\) is negative.  Let
\((r_0,B_0)\) be a point at which this minimum is attained.  The boundary
assumptions imply \(r_0>0\) and \(B_0\in(\alpha,\beta)\).  At this point,
using the left derivative when \(r_0=r_1\),
\[
 \widetilde v_r\le0,\qquad
 \widetilde v_B=0,\qquad
 \widetilde v_{BB}\ge0.
\]
On the other hand, \eqref{eq:comparison-PDE} gives
\[
 \widetilde v_r
 =d\widetilde v_{BB}+b\widetilde v_B
 +(c-\lambda)\widetilde v+\e^{-\lambda r}F>0,
\]
because \(\widetilde v(r_0,B_0)<0\) and \(c-\lambda<0\).  This
contradiction proves the first assertion.  Applying it to \(-v\) proves
the reversed assertion.
\end{proof}

The comparison principle allows us to propagate sign conditions on
\(K\), \(J\), and their derivatives along a Cole--Hopf evolution. The
next proposition identifies a collection of inequalities that is
preserved both by such evolutions and by decreasing the active parameter.
Since these are precisely the two operations used to construct a finite
Cole--Hopf cascade from \(\log\cosh\), the proposition applies throughout
every such cascade.

\begin{proposition}\label{prop:invariant}
Consider a pair \((u,a)\), where \(a>0\).  Begin with
\((u,a)=(\log\cosh,1)\), and perform finitely many operations of either
of the following forms:
\begin{enumerate}
\item for \(r\ge0\), replace \(u\) by \(T_{a,r}u\), leaving \(a\)
unchanged;
\item for \(b\in(0,a)\), replace \(a\) by \(b\), leaving \(u\)
unchanged.
\end{enumerate}
For every pair obtained in this way, the functions defined in
\eqref{eq:A-K-J} satisfy
\begin{equation}\label{eq:preserved-ineq}
 K\ge0,\qquad K_B\ge0,\qquad
 J\ge0,\qquad J_B\ge0,\qquad
 (C^{3/2}J)_B\le0
\end{equation}
for \(0\le B<1\).  In particular, if
\begin{equation}\label{eq:def-z}
 z=-\frac12C_B=(a+K)B,
\end{equation}
then
\begin{equation}\label{eq:CJB-bound}
 CJ_B\le3zJ.
\end{equation}
\end{proposition}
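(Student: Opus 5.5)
We argue by induction on the number of operations, establishing \eqref{eq:preserved-ineq} in three steps.

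\textbf{Step 1: base case.} For \((\log\cosh,1)\) we have \(B=\tanh x\) and \(C=\sech^2x=1-B^2\). Hence \(C_B=-2B\), \(A=1\), \(K=0\), and \(J=-1+1=0\). Every inequality in \eqref{eq:preserved-ineq} then holds with equality.

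\textbf{Step 2: lowering the parameter.} Operation (2) leaves \(u\), and therefore \(C(B)\), unchanged. Replacing \(a\) by \(b<a\) increases \(K=A-b\) and \(J=-b-\frac12C_{BB}\) by the constant \(a-b>0\). It leaves \(K_B\) and \(J_B\) unchanged. For the last inequality,
\[
(C^{3/2}J)_B
\]
changes by \((a-b)\frac32C^{1/2}C_B\). Since \(C_B=-2(a+K)B\le0\) for \(B\ge0\), this change is nonpositive. So all five inequalities survive. Note that \(K\ge0\) is what gives \(C_B\le0\) here.

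\textbf{Step 3: Cole--Hopf evolution.} This is the core of the argument. Starting from \eqref{eq:CH-PDE}, differentiate twice in \(x\):
\[
C_r=\tfrac12C_{xx}+aBC_x+aC^2 .
\]
Next pass to slope coordinates. We have \(B_r=\frac12C_x+aBC\), and
\[
\partial_r|_B=\partial_r|_x-\frac{B_r}{C}\,\partial_x .
\]
The transport terms cancel, giving
\[
C_r|_B=\tfrac12C^2C_{BB}+aC^2 .
\]
Setting \(A=-C_B/(2B)\) and \(J=-a-\frac12C_{BB}\), this reads
\[
C_r=-C^2J .
\]

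We then differentiate in \(B\) to obtain closed parabolic equations for \(K\), \(K_B\), \(J\), \(J_B\), and \(Q:=(C^{3/2}J)_B\). Each has the form \(v_r=\frac12C^2v_{BB}+(\text{lower order})\). The diffusion coefficient \(d=\frac12C^2\) is positive on \((-1,1)\) and vanishes at \(B=\pm1\). This degeneracy is exactly what \Cref{lem:comparison} permits.

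The computation must be organized so that the zeroth-order and forcing terms in each equation are nonnegative (nonpositive for \(Q\)) whenever the other inequalities hold. For example, \(J\) satisfies an equation whose forcing involves \(J_B^2\) and products like \(C_BJ_B\). We would order the inequalities so that each forcing term is controlled by ones already established. Where the system is coupled, we would use a joint first-violation argument, running the proof of \Cref{lem:comparison} on the minimum of the relevant quantities.

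\textbf{Boundary data.} The comparison lemma needs signs on the boundary of the strip.
\begin{itemize}
\item At \(B=0\), use evenness: \(K_B\), \(J_B\), and \(Q\) are odd, so they vanish there. Alternatively, work on \([0,1]\) with \(B=0\) as a reflecting boundary.
\item At \(B\to1\), use \Cref{lem:slope-endpoint-expansion}. With \(C=2\delta+c_2\delta^2+\dots\), we get \(C_{BB}\to2c_2\) and \(A\to1\), so \(K\to1-a\) and \(J\to-a-c_2\).
\end{itemize}
The endpoint behaviour must be tracked via the evolution of \(c_2(r)\) and \(c_3(r)\). This evolution follows by inserting the expansion into \(C_r=-C^2J\). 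One checks that the boundary values are preserved, or that the boundary is non-characteristic enough that no boundary condition is needed. Because \(d\sim2\delta^2\) vanishes quadratically, I expect the endpoint to be inaccessible, so the comparison argument can be run on \([0,1-\epsilon]\) with remainder estimates from \eqref{eq:C-delta-remainder}.

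\textbf{Main obstacle.} The hardest step is the inequality \((C^{3/2}J)_B\le0\). Its evolution equation has the most complicated lower-order terms, and its forcing must be shown nonpositive using all four other inequalities. I would first try writing \(Q=C^{1/2}(CJ_B-3zJ)\), using \(\frac32C_B=-3z\). This reduces the problem to \(CJ_B-3zJ\le0\), which is exactly \eqref{eq:CJB-bound}, and derives that quantity's equation directly.

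\textbf{Consequence \eqref{eq:CJB-bound}.} Expanding the last inequality gives
\[
C^{3/2}J_B+\tfrac32C^{1/2}C_BJ\le0 .
\]
Since \(C_B=-2z\) and \(C>0\), dividing by \(C^{1/2}\) yields \(CJ_B\le3zJ\).
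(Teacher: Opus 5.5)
Your base case, the parameter-lowering step, the derivation of $C_r=-C^2J$ at fixed $B$, and the deduction of \eqref{eq:CJB-bound} from $(C^{3/2}J)_B\le0$ are correct and agree with the paper. The gap is the rest of Step~3, which is the actual content of the proposition. You never write down the evolution equations or check that their zeroth-order and source terms have the required signs. You also leave $(C^{3/2}J)_B\le0$ explicitly open, so ``order the inequalities'' and ``a joint first-violation argument'' remain a hope rather than a proof.

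Your guess about the structure is also off in a way that affects the ordering. Differentiating $C_r=-C^2J$ twice gives
\[
 J_r=\tfrac12C^2J_{BB}+2CC_BJ_B+\bigl(C_B^2-2C(J+a)\bigr)J.
\]
This is homogeneous linear in $J$, with no $J_B^2$ forcing, so $J\ge0$ must come first and follows from initial and boundary data alone. After that, $K$ and $K_B$ need no PDE. The identity $BK=\int_0^BJ$ gives $K\ge0$, hence $C_B\le0$ on $B\ge0$. Once $J_B\ge0$ is known, the same averaging gives $K\le J$, i.e.\ $K_B=(J-K)/B\ge0$. The equation for $\kappa=J_B$ is linear with source $-6C_B(J+a)J\ge0$.

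For the last inequality the weight is the key. The function $\mathcal E=C^{3/2}J$ satisfies
\[
 \mathcal E_r=\tfrac12C^2\mathcal E_{BB}+\tfrac12CC_B\mathcal E_B-V\mathcal E-\tfrac32C^{-1/2}\mathcal E^2,
 \qquad
 V=\tfrac18C_B^2+\tfrac12C(J+a).
\]
With exponent $3/2$, the $C_B(J+a)$ contributions to $V_B$ cancel exactly, leaving $V_B=\tfrac12CJ_B$; for smaller exponents they enter the source with the wrong sign. Hence $\mathcal F=\mathcal E_B$ solves a linear equation with source $-\tfrac12C^{5/2}JJ_B+\tfrac34C^{3/2}C_BJ^2\le0$. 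Rewriting the target as $CJ_B-3zJ\le0$ is harmless, but it does not by itself supply this structure.

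The boundary at $B=1$ must also be handled concretely. Running \Cref{lem:comparison} on $[0,1-\epsilon]$ needs the sign of the unknown at $B=1-\epsilon$, which is what you are trying to prove, so that fallback is circular as stated. A barrier exploiting the degeneracy of $\tfrac12C^2$ might be built, but you have not done so.

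The paper instead computes the boundary values. Inserting the expansion of \Cref{lem:slope-endpoint-expansion} into $C_r=-C^2J$ gives $c_2'=4(a+c_2)$ and $c_3'=12c_3+4c_2(a+c_2)$. Hence $j_\infty=J(r,1)=-a-c_2$ and $\kappa_\infty=J_B(r,1)=3c_3$ satisfy
\[
 j_\infty'=4j_\infty,
 \qquad
 \kappa_\infty'=12\kappa_\infty+12(j_\infty+a)j_\infty,
\]
which preserve nonnegativity.

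For $\mathcal F$, one uses $\mathcal F=-3\sqrt2\,j_\infty\delta^{1/2}+O(\delta^{3/2})$ to get $\mathcal F(r,1)=0$, and $\mathcal F(r,0)=0$ holds by parity. One also observes that the singular coefficient $\tfrac32C^{-1/2}$ enters the $\mathcal F$-equation only through the bounded combinations $\tfrac32C^{-1/2}\mathcal E=\tfrac32CJ$ and $\partial_B(\tfrac32C^{-1/2})\,\mathcal E^2=-\tfrac34C^{3/2}C_BJ^2$. So the zeroth-order coefficient is bounded above, as \Cref{lem:comparison} requires.
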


\begin{proof}
We first prove preservation under the operation \(u\mapsto T_{a,r}u\)
with \(a\) fixed.  Throughout this part, derivatives with respect to
\(r\) are taken at fixed \(B\).

Differentiating the identity \(B=u_x(r,x(r,B))\) at fixed \(B\), using
\eqref{eq:CH-PDE}, \(x_B=C^{-1}\), and \(u_{xxx}=CC_B\), gives
\begin{equation}\label{eq:x-r-forward}
 x_r=KB.
\end{equation}
Differentiating \(C=u_{xx}(r,x(r,B))\) at fixed \(B\), and then using
\eqref{eq:x-r-forward}, gives
\begin{equation}\label{eq:C-r-fixed-B}
 C_r=\frac12C^2(C_{BB}+2a)=-C^2J.
\end{equation}
Differentiating \eqref{eq:C-r-fixed-B} twice with respect to \(B\) yields
\begin{equation}\label{eq:J-PDE}
 J_r
 =\frac{C^2}{2}J_{BB}+2CC_BJ_B
 +(C_B^2-2C(J+a))J.
\end{equation}
For \(\kappa=J_B\), one further differentiation gives
\begin{align}
 \kappa_r={}&\frac{C^2}{2}\kappa_{BB}+3CC_B\kappa_B
 \label{eq:kappa-PDE}\\
 &+\bigl(3C_B^2-6C(J+a)-2CJ\bigr)\kappa
 -6C_B(J+a)J.\notag
\end{align}

Set
\begin{equation}\label{eq:calE-def}
 \mathcal E=C^{3/2}J.
\end{equation}
Substitution of \eqref{eq:C-r-fixed-B} and \eqref{eq:J-PDE}, followed by
collection of the coefficients of
\(\mathcal E_{BB}\), \(\mathcal E_B\), \(\mathcal E\), and
\(\mathcal E^2\), gives
\begin{equation}\label{eq:calE-PDE}
 \mathcal E_r
 =\frac{C^2}{2}\mathcal E_{BB}
 +\frac{CC_B}{2}\mathcal E_B
 -V\mathcal E-\Lambda\mathcal E^2,
\end{equation}
where
\begin{equation}\label{eq:V-Lambda}
 V=\frac{C_B^2}{8}+\frac{C(J+a)}2,
 \qquad
 \Lambda=\frac32C^{-1/2}.
\end{equation}
Since \(C_{BB}=-2(J+a)\),
\begin{equation}\label{eq:V-Lambda-derivatives}
 V_B=\frac C2J_B,
 \qquad
 \Lambda_B=-\frac34C^{-3/2}C_B.
\end{equation}
Thus \(\mathcal F=\mathcal E_B\) satisfies
\begin{align}
 \mathcal F_r={}&\frac{C^2}{2}\mathcal F_{BB}
 +\frac{3CC_B}{2}\mathcal F_B
 \label{eq:calF-PDE}\\
 &+\left[\left(\frac{CC_B}{2}\right)_B
 -V-2\Lambda\mathcal E\right]\mathcal F
 -V_B\mathcal E-\Lambda_B\mathcal E^2.\notag
\end{align}

Let \(\delta=1-B\).  By
\Cref{lem:slope-endpoint-expansion}, the expansion
\eqref{eq:C-delta-expansion} and the mixed derivative bounds
\eqref{eq:C-delta-remainder} hold uniformly on every bounded
\(r\)-interval.

Substituting \eqref{eq:C-delta-expansion} into
\eqref{eq:C-r-fixed-B} and comparing the coefficients of
\(\delta^2\) and \(\delta^3\) gives
\begin{equation}\label{eq:c2-c3-ODE}
 c_2'=4(a+c_2),
 \qquad
 c_3'=12c_3+4c_2(a+c_2).
\end{equation}
Define the endpoint values
\begin{equation}\label{eq:j-ell-endpoint}
 j_\infty(r)=J(r,1)=-a-c_2(r),
 \qquad
 \kappa_\infty(r)=J_B(r,1)=3c_3(r).
\end{equation}
Equation \eqref{eq:c2-c3-ODE} implies
\begin{equation}\label{eq:j-ell-ODE}
 j_\infty'=4j_\infty,
 \qquad
 \kappa_\infty'=12\kappa_\infty+12(j_\infty+a)j_\infty.
\end{equation}
Thus \(j_\infty(0),\kappa_\infty(0)\ge0\) implies \(j_\infty(r),\kappa_\infty(r)\ge0\) for every later
time in the fixed-parameter evolution.

The bounds in \eqref{eq:C-delta-remainder} show that \(C\),
\(J\), and \(J_B\) extend jointly continuously to
\([0,R]\times[-1,1]\).  Hence every coefficient and source term in
\eqref{eq:J-PDE} and \eqref{eq:kappa-PDE} extends continuously to the
closed strip and is bounded there.

Although \(\Lambda\) and \(\Lambda_B\) in
\eqref{eq:calF-PDE} are singular at \(B=1\), the combinations in which
they occur satisfy
\begin{equation}\label{eq:singular-products}
 \Lambda\mathcal E=\frac32CJ,
 \qquad
 \Lambda_B\mathcal E^2=-\frac34C^{3/2}C_BJ^2.
\end{equation}
It follows that
\begin{equation}\label{eq:calF-coefficients-closed}
 \left(\frac{CC_B}{2}\right)_B
 -V-2\Lambda\mathcal E
\end{equation}
and
\begin{equation}\label{eq:calF-source-closed}
 -V_B\mathcal E-\Lambda_B\mathcal E^2
 =-\frac12C^{5/2}J_BJ+\frac34C^{3/2}C_BJ^2
\end{equation}
extend jointly continuously to \([0,R]\times[0,1]\).  In particular,
the expression in \eqref{eq:calF-coefficients-closed} is bounded above.
Moreover,
\begin{equation}\label{eq:calF-endpoint-asymptotic}
 \mathcal F(r,B)
 =-3\sqrt2\,j_\infty(r)\delta^{1/2}
 +O_R(\delta^{3/2}),
\end{equation}
uniformly for \(0\le r\le R\).  Thus
\[
 \mathcal F\in C([0,R]\times[0,1])
 \cap C^{1,2}((0,R]\times(0,1)),
 \qquad
 \mathcal F(r,1)=0.
\]
Together with the smooth interior regularity and the parity conditions
at \(B=0\), these observations verify all regularity and boundedness
hypotheses of \Cref{lem:comparison} in the applications below.

Assume that \eqref{eq:preserved-ineq} holds at time \(r=0\), and consider
the evolution on a finite interval \(0\le r\le R\).  The functions
\(C\) and \(J\) are even in \(B\), so \eqref{eq:J-PDE} holds on
\(-1<B<1\).  Its initial value is nonnegative.  Its boundary values at
\(B=\pm1\) both equal \(j_\infty(r)\), which is nonnegative by
\eqref{eq:j-ell-ODE}.  The zero-order coefficient is bounded on the
closed time interval.  Applying \Cref{lem:comparison} gives
\begin{equation}\label{eq:J-positive}
 J\ge0.
\end{equation}
Since \(K\) is continuous at \(B=0\),
\[
 \lim_{B\downarrow0}BK(B)=0.
\]
Integrating the identity \(J=(BK)_B\) from \(0\) to \(B\) therefore
gives
\begin{equation}\label{eq:K-average}
 BK(B)=\int_0^B J(v)\dd v.
\end{equation}
Hence \(K\ge0\) for \(B\ge0\), and therefore
\(C_B=-2(a+K)B\le0\).

The function \(\kappa=J_B\) has initial value \(\kappa(0,\cdot)\ge0\), and its
boundary values satisfy \(\kappa(r,0)=0\) and
\(\kappa(r,1)=\kappa_\infty(r)\ge0\).  By \eqref{eq:J-positive},
\[
 -6C_B(J+a)J\ge0.
\]
Thus the source term in \eqref{eq:kappa-PDE} is nonnegative, and
\Cref{lem:comparison} gives
\begin{equation}\label{eq:JB-positive}
 J_B\ge0.
\end{equation}
It follows that \(J\) is nondecreasing.  For \(B>0\),
\begin{equation}\label{eq:K-le-J}
 K(B)=\frac1B\int_0^B J(v)\dd v\le J(B).
\end{equation}
Since \(J=K+BK_B\), we obtain \(K_B\ge0\) for \(B>0\), and continuity
gives the inequality at \(B=0\).

It remains to prove \(\mathcal F\le0\).  The initial value is
nonpositive by assumption.  By
\eqref{eq:J-positive}--\eqref{eq:JB-positive},
\[
 \mathcal E\ge0,\qquad V_B\ge0,\qquad \Lambda_B\ge0,
\]
so the source term
\(-V_B\mathcal E-\Lambda_B\mathcal E^2\) in
\eqref{eq:calF-PDE} is nonpositive.  Parity gives
\(\mathcal F(r,0)=0\), while
\eqref{eq:calF-endpoint-asymptotic} gives
\(\mathcal F(r,1)=0\).  The reversed form of
\Cref{lem:comparison}, applied to \eqref{eq:calF-PDE}, yields
\begin{equation}\label{eq:calF-negative}
 (C^{3/2}J)_B=\mathcal F\le0.
\end{equation}
This proves that all five inequalities in \eqref{eq:preserved-ineq} are
preserved while \(a\) is fixed.

Keep \(u\), and hence \(B\) and \(C\), fixed, and replace \(a\) by
\(b\in(0,a)\).  The definitions give
\begin{equation}\label{eq:parameter-decrease}
 K_b=K_a+a-b,
 \qquad
 J_b=J_a+a-b,
 \qquad
 (J_b)_B=(J_a)_B.
\end{equation}
Moreover,
\begin{equation}\label{eq:calE-parameter-decrease}
 (C^{3/2}J_b)_B
 =(C^{3/2}J_a)_B+\frac32(a-b)C^{1/2}C_B
 \le (C^{3/2}J_a)_B.
\end{equation}
Thus decreasing \(a\) preserves \eqref{eq:preserved-ineq}.

For \(u(x)=\log\cosh x\) and \(a=1\),
\begin{equation}\label{eq:initial-differential-ineq}
 B=\tanh x,\qquad C=1-B^2,\qquad K=J=0.
\end{equation}
Therefore \eqref{eq:preserved-ineq} holds initially, and the preceding
arguments prove that it holds after every operation in the proposition.

Finally, differentiating \(C^{3/2}J\) and using \(C_B=-2z\) gives
\[
 (C^{3/2}J)_B=C^{1/2}(CJ_B-3zJ).
\]
The last inequality in \eqref{eq:preserved-ineq} therefore implies
\eqref{eq:CJB-bound}.
\end{proof}

\subsection{Extension to general Parisi boundary data}

The invariant in \Cref{prop:invariant} is proved for finite
Cole--Hopf compositions with strictly positive active parameters.
Such compositions arise from finitely supported Parisi measures on any
backward time interval on which the cumulative mass is bounded below
by a positive constant. We now extend it to the boundary function
\[
    \psi=u_\nu(q_\star,\cdot)
\]
generated by an arbitrary measure
\[
    \nu=m\delta_0+(1-m)\rho,
    \qquad \supp\rho\subseteq[q_\star,1].
\]
\Cref{prop:closed-invariant} shows, by approximation with finitely
supported measures, that the relevant inequalities remain valid
for \(\psi\) and its subsequent Cole--Hopf evolution with parameter
\(m\). \Cref{lem:endpoint-growth} then gives the growth and decay
estimates near the endpoints of the slope coordinate that will be used
to justify differentiation under the integral sign and integration by
parts in the next section.

\begin{proposition}\label{prop:closed-invariant}
Let \(0<m<1\), let \(q_\star\in(0,1]\), and let \(\rho\) be a Borel
probability measure on \([0,1]\) such that
\[
    \supp\rho\subseteq[q_\star,1].
\]
Set
\begin{equation}\label{eq:first-gap-measure-class}
    \nu=m\delta_0+(1-m)\rho.
\end{equation}
Put
\[
    \psi(x)=u_\nu(q_\star,x),\qquad U_r=T_{m,r}\psi.
\]
For \(B=U_{r,x}(x)\), regard \(C=U_{r,xx}(x)\) as a function of
\((r,B)\). Define \(A,K,J\) by \eqref{eq:A-K-J} with \(a=m\), and
define \(z\) by \eqref{eq:def-z}. Then, for every
\(r\ge0\) and \(0\le B<1\),
\begin{equation}\label{eq:closed-cone}
 K,K_B,J,J_B\ge0,\qquad CJ_B\le3zJ.
\end{equation}
\end{proposition}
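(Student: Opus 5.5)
The plan is to approximate \(\rho\) by finitely supported measures, apply \Cref{prop:invariant} to the resulting finite cascades, and pass to the limit using the uniform derivative convergence in \Cref{prop:PDE-regularity}.

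\textbf{Step 1: finite cascades.} Choose finitely supported probability measures \(\rho_n\Rightarrow\rho\) with \(\supp\rho_n\subseteq[q_\star,1]\), for instance by pushing \(\rho\) forward under a dyadic rounding map that sends each point to a grid point of \([q_\star,1]\) at or above it. Set \(\nu_n=m\delta_0+(1-m)\rho_n\). Then \(\nu_n\Rightarrow\nu\), and the distribution function \(\alpha_n(s)=\nu_n([0,s])\) is piecewise constant, nondecreasing, and takes values in \([m,1]\). By \Cref{lem:constant-mass-cole-hopf}, reading backward from \(s=1\), \(\psi_n=u_{\nu_n}(q_\star,\cdot)\) is obtained from \(\log\cosh\) by a finite sequence of transforms \(T_{a,r}\), where \(a\) runs through the values of \(\alpha_n\) on successive intervals going backward in time. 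These values decrease from \(1\) to values \(\ge m\), since no zero-mass heat steps occur. Any atom of \(\rho_n\) at \(1\) is harmless: it only affects the value of \(\alpha_n\) at \(s=1\), so the first parameter is \(1\) on \([s_k,1)\) and \(\log\cosh\) paired with \(a=1\) is the correct starting pair.

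Each change of parameter is an operation of type (ii) in \Cref{prop:invariant}. At \(q_\star\) we lower the parameter from \(\alpha_n(q_\star)\ge m\) to \(m\) if needed, which is again a type (ii) operation. Then \(U^n_r=T_{m,r}\psi_n\) is one further type (i) operation. Hence \Cref{prop:invariant} gives \eqref{eq:preserved-ineq} and \eqref{eq:CJB-bound}, that is \eqref{eq:closed-cone}, for \(U^n_r\) for every \(r\ge0\) and \(0\le B<1\).

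\textbf{Step 2: convergence in \(x\)-coordinates.} By \eqref{eq:uniform-derivative-convergence}, \(\partial_x^j\psi_n\to\partial_x^j\psi\) uniformly for every \(j\ge0\). These derivatives are bounded uniformly in \(n\), and \(|\psi_n(x)|\le C+|x|\) uniformly. Using the tilted Gaussian representation \eqref{eq:u-derivatives} and its higher analogues together with dominated convergence, \(\partial_x^jU^n_r\to\partial_x^jU_r\) locally uniformly in \(x\), for each fixed \(r\).

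The limit satisfies \(U_{r,xx}>0\). This follows from \eqref{eq:curvature-bounds}, which gives \(\psi''\ge \underline c_\beta\sech^2x\), combined with \eqref{eq:u-derivatives}. Also \(U_{r,x}\to\pm1\) at \(\pm\infty\), by the tilted-expectation formula and the fact that \(\psi'\to\pm1\) from \eqref{eq:slope-endpoints}. So \(x\mapsto B\) is again a bijection onto \((-1,1)\).

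\textbf{Step 3: transfer to slope coordinates.} The quantities \(K,K_B,J,J_B\) and \(CJ_B-3zJ\) are rational expressions in \(U_{xx},\dots,U_{xxxxx}\) divided by powers of \(C\), since \(\partial_B=C^{-1}\partial_x\). Fix \(B_0\in[0,1)\) and let \(x_n=x_n(r,B_0)\) be the corresponding point for \(U^n_r\). Uniform convergence of \(U^n_{r,x}\) and strict monotonicity of the limit give \(x_n\to x(r,B_0)\). Local uniform convergence of the higher derivatives, together with \(C\) bounded away from zero on compacts, then gives convergence of every quantity in \eqref{eq:closed-cone} at \(B_0\). The non-strict inequalities survive the limit.

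At \(B=0\) the quantity \(K=-C_B/(2B)-m\) is a difference quotient. I would handle it through the representation \(K(B)=B^{-1}\int_0^BJ\), which converges, or equivalently through \(-U_{xxxx}/(2C^2)\) at \(x=0\). Continuity in \(B\) then covers this point.

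\textbf{Expected obstacle.} The main difficulty is the bookkeeping of the reduction in Step 1: making sure the cascade really consists of operations of types (i) and (ii) starting from \((\log\cosh,1)\). In particular one must confirm that the parameter sequence is nonincreasing when read backward, that it ends at \(m\), and that any atom at \(q_\star\) or at \(1\) is treated correctly. The analytic limit in Steps 2 and 3 is routine once \(C\) is bounded below on compact sets, and since \eqref{eq:closed-cone} is pointwise in \(B<1\), no endpoint uniformity is needed.
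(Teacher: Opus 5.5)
Your proposal is correct and follows the paper's proof: you approximate \(\rho\) by finitely supported measures, realize \(T_{m,r}\psi_n\) as a cascade of the operations allowed in \Cref{prop:invariant}, and pass to the limit in slope coordinates using \eqref{eq:uniform-derivative-convergence}. Two differences in detail: the paper adds a small atom at \(q_\star\) so that the final lowering to \(m\) is a strict type (ii) operation, and it proves uniform-in-\(n\) endpoint and curvature bounds that are reused later, whereas your pointwise convergence of the inverse slope points suffices for the statement as posed. One slip needs correcting: an atom of \(\rho_n\) at \(1\) does \emph{not} leave the first parameter equal to \(1\), since \(\alpha_n=1-(1-m)\rho_n(\{1\})<1\) on \([s_k,1)\). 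This is harmless, because from \((\log\cosh,1)\) you may perform a type (ii) lowering to that value before the first Cole--Hopf step.
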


\begin{proof}
Choose finitely supported probability measures
\(\widehat\rho_n\) on \([q_\star,1]\) such that
\(\widehat\rho_n\Rightarrow\rho\), and choose
\(\varepsilon_n\downarrow0\). Define
\[
 \rho_n=(1-\varepsilon_n)\widehat\rho_n
 +\varepsilon_n\delta_{q_\star},\qquad
 \nu_n=m\delta_0+(1-m)\rho_n.
\]
Then \(\nu_n\Rightarrow\nu\), each \(\nu_n\) is finitely supported,
and \(q_\star\) is an atom of \(\nu_n\). Set
\[
 \psi_n(x)=u_{\nu_n}(q_\star,x),\qquad
 U_{n,r}=T_{m,r}\psi_n.
\]
Set
\[
    \lambda_n
    =\nu_n([0,q_\star])
    =m+(1-m)\rho_n(\{q_\star\})>m.
\]
Because \(\nu_n\) is finitely supported, repeated application of
\Cref{lem:constant-mass-cole-hopf} expresses \(\psi_n\) as a finite
Cole--Hopf cascade obtained from \(\log\cosh\). When solving the
Parisi PDE backward from \(1\) to \(q_\star\), changes between
successive constant-mass intervals correspond to decreases of the
active parameter, and every active parameter used on
\([q_\star,1]\) is at least \(\lambda_n\). Thus the construction of
\(\psi_n\) consists entirely of operations allowed by
\Cref{prop:invariant}. Lowering the active parameter at \(q_\star\)
from \(\lambda_n\) to \(m\), and then applying \(T_{m,r}\), are further
allowed operations. Hence \Cref{prop:invariant} gives
\eqref{eq:closed-cone} for \(U_{n,r}\).

By \eqref{eq:uniform-derivative-convergence},
\[
 \partial_x^j\psi_n\longrightarrow\partial_x^j\psi
\]
uniformly on \(\mathbb R\) for every \(j\ge0\). For \(0\le r\le R\),
the Lipschitz bound \(|\psi_n'|\le1\) gives
\begin{equation}\label{eq:tilt-ratio-bound}
 \e^{-m\sqrt R|Z|}
 \le\e^{m(\psi_n(x+\sqrt rZ)-\psi_n(x))}
 \le\e^{m\sqrt R|Z|}.
\end{equation}
The same estimate holds with \(\psi\) in place of \(\psi_n\). The identity
\[
    \e^{mU_{n,r}(x)}=P_r(\e^{m\psi_n})(x)
\]
and its \(x\)-derivatives express, for each \(j\ge1\),
\(\partial_x^jU_{n,r}(x)\) as a finite combination of quotients whose
numerators are Gaussian expectations of
\[
    \e^{m(\psi_n(x+\sqrt rZ)-\psi_n(x))}
\]
multiplied by polynomials in the derivatives of \(\psi_n\). By
\eqref{eq:uniform-derivative-convergence}, for each fixed \(Z\), these
normalized integrands converge uniformly for \((r,x)\) in compact
subsets of \([0,R]\times\mathbb R\). They are bounded by a constant
multiple of \(\e^{m\sqrt R|Z|}\), by
\eqref{eq:tilt-ratio-bound}, while every normalized denominator is
bounded below by
\[
    d_R=\E\e^{-m\sqrt R|Z|}>0.
\]
Dominated convergence therefore gives
\begin{equation}\label{eq:Un-derivative-convergence}
    \partial_x^jU_{n,r}(x)\longrightarrow\partial_x^jU_r(x)
\end{equation}
locally uniformly in \((r,x)\), for every \(j\ge1\). The same conclusion
for \(j=0\) follows directly from the logarithmic Cole--Hopf formula and
the uniform convergence \(\psi_n\to\psi\).

We next establish endpoint control uniform in the approximating
measure. For each \(n\), let \(Y^{n,y}\) be the optimal controlled
diffusion associated with \(u_{\nu_n}\), started from \(y\) at time
\(q_\star\). The stochastic representation used in the proof of
\Cref{prop:PDE-regularity} gives
\[
    \psi_n'(y)=\E\tanh(Y^{n,y}_1).
\]
Moreover, since \(|u_{\nu_n,x}|\le1\),
\[
    \left|
        Y^{n,y}_1-y-\beta(W_1-W_{q_\star})
    \right|
    \le D_\star,
    \qquad
    D_\star=\beta^2(1-q_\star),
\]
uniformly in \(n\). If \(y\ge4D_\star\), then on the event
\[
    \left|\beta(W_1-W_{q_\star})\right|\le\frac y4
\]
one has \(Y^{n,y}_1\ge y/2\). Since
\(1-\tanh z\le2\e^{-2z}\) for \(z\ge0\), it follows that
\[
    1-\psi_n'(y)
    \le
    2\e^{-y}
    +2\P\left(
        \left|\beta(W_1-W_{q_\star})\right|>\frac y4
    \right).
\]
By reflection, for \(y\le-4D_\star\),
\[
    1+\psi_n'(y)
    \le
    2\e^{y}
    +2\P\left(
        \left|\beta(W_1-W_{q_\star})\right|>\frac{|y|}{4}
    \right).
\]
The right sides tend to zero as \(|y|\to\infty\), uniformly in \(n\).
The same argument applies to \(\psi\). Consequently, there is a
function \(\eta(L)\downarrow0\) such that
\begin{equation}\label{eq:upper-profile-uniform-endpoints}
 \sup_{n\ge1}\sup_{y\ge L}\bigl(1-\psi_n'(y)\bigr)
 +\sup_{n\ge1}\sup_{y\le-L}\bigl(1+\psi_n'(y)\bigr)
 \le\eta(L),
\end{equation}
and the same estimate holds with \(\psi\) in place of \(\psi_n\).

The tilted first-derivative formula and
\eqref{eq:tilt-ratio-bound} now yield, for \(x>0\),
\begin{align*}
 1-U_{n,r,x}(x)
 &\le d_R^{-1}\E\left[
 \e^{m\sqrt R|Z|}\bigl(1-\psi_n'(x+\sqrt rZ)\bigr)\right]\\
 &\le d_R^{-1}\eta(x/2)\E\e^{m\sqrt R|Z|}
 +2d_R^{-1}\E\left[
 \e^{m\sqrt R|Z|}\1_{\{\sqrt R|Z|>x/2\}}\right].
\end{align*}
The right side tends to zero uniformly in \(n\) and \(0\le r\le R\).
The reflected estimate at negative infinity is identical. Consequently,
\begin{equation}\label{eq:transformed-slope-endpoints}
 \lim_{L\to\infty}\sup_{n\ge1}\sup_{0\le r\le R}
 \left(
 \sup_{x\ge L}|U_{n,r,x}(x)-1|
 +\sup_{x\le-L}|U_{n,r,x}(x)+1|
 \right)=0,
\end{equation}
and the same conclusion holds for \(U_r\).

There is also a curvature lower bound uniform in \(n\) and
\(0\le r\le R\). By the tilted second-derivative identity,
\eqref{eq:curvature-bounds}, and
\(\sech^2(x+y)\ge\e^{-2|y|}\sech^2x\),
\begin{align}
 U_{n,r,xx}(x)
 &\ge
 \frac{\E[\e^{m\psi_n(x+\sqrt rZ)}
 \psi_n''(x+\sqrt rZ)]}
 {\E\e^{m\psi_n(x+\sqrt rZ)}}\notag\\
 &\ge c_{\beta,m,R}\sech^2x.\label{eq:uniform-transformed-curvature}
\end{align}
Here one may take
\[
 c_{\beta,m,R}
 =\underline{c}_\beta
 \frac{\E\e^{-(m+2)\sqrt R|Z|}}
 {\E\e^{m\sqrt R|Z|}}>0.
\]
The same bound holds for \(U_r\).

Fix a compact interval \(I\Subset(-1,1)\). Enlarging \(I\) within
\((-1,1)\) if necessary, we may assume that \(0\in I\). By
\eqref{eq:transformed-slope-endpoints}, there exists \(L<\infty\) such that,
for every \(n\), every \(0\le r\le R\), and every \(B\in I\), the inverse
slope points \(x_n(r,B)\) and \(x(r,B)\) lie in \([-L,L]\). On this
interval, \eqref{eq:uniform-transformed-curvature} gives the common lower
bound
\[
 \min\bigl\{
 U_{n,r,xx}(x),U_{r,xx}(x)
 \bigr\}
 \ge c_{\beta,m,R}\sech^2L=:c_0>0
\]
for every \(n\), \(0\le r\le R\), and \(x\in[-L,L]\). 
Therefore
\begin{align*}
 c_0|x_n(r,B)-x(r,B)|
 &\le|U_{r,x}(x_n(r,B))-U_{r,x}(x(r,B))|\\
 &=|U_{r,x}(x_n(r,B))-U_{n,r,x}(x_n(r,B))|,
\end{align*}
and \eqref{eq:Un-derivative-convergence} with \(j=1\) proves
\begin{equation}\label{eq:inverse-slope-uniform-convergence}
 \sup_{(r,B)\in[0,R]\times I}|x_n(r,B)-x(r,B)|\longrightarrow0.
\end{equation}
Combining this convergence with
\eqref{eq:Un-derivative-convergence}, and using uniform continuity of the
limiting spatial derivatives on the common compact \(x\)-interval, gives
local uniform convergence after evaluation at the inverse slope points.

Writing
\[
    D=U_{r,xxx},\qquad Q=U_{r,xxxx},\qquad E=U_{r,xxxxx},
\]
repeated use of \(\partial_B=C^{-1}\partial_x\) gives
\begin{align}
 C_B&=\frac DC,\label{eq:C-B-spatial}\\
 C_{BB}&=\frac Q{C^2}-\frac{D^2}{C^3},
 \label{eq:C-BB-spatial}\\
 C_{BBB}&=\frac E{C^3}-\frac{4QD}{C^4}
 +\frac{3D^3}{C^5}.
 \label{eq:C-BBB-spatial}
\end{align}
The same formulas hold for \(U_{n,r}\). Therefore the preceding
convergence after evaluation at the inverse slope points, together with
the common positive lower bound for \(C_n\) and \(C\), implies local
uniform convergence of \(C_n\) and its first three \(B\)-derivatives on
\([0,R]\times I\).  The extensions through \(B=0\) can be tracked
without dividing by \(B\): by parity,
\begin{align*}
 K(B)&=-\frac12\int_0^1C_{BB}(\lambda B)\dd\lambda-m,\\
 K_B(B)&=-\frac12\int_0^1\lambda C_{BBB}(\lambda B)\dd\lambda.
\end{align*}
The same formulas hold for \(K_n\).  Hence
\eqref{eq:A-K-J} and \eqref{eq:K-J-identities} give
\[
 K_n\to K,\quad (K_n)_B\to K_B,\quad
 J_n\to J,\quad (J_n)_B\to J_B,\quad z_n\to z
\]
locally uniformly on \([0,R]\times(I\cap[0,1))\), including at
\(B=0\). Passing to the limit in
\[
    K_n,\ (K_n)_B,\ J_n,\ (J_n)_B\ge0,
    \qquad
    C_n(J_n)_B\le3z_nJ_n,
\]
proves \eqref{eq:closed-cone}.
\end{proof}

\begin{lemma}\label{lem:endpoint-growth}
Fix \(R<\infty\).  For every integer \(k\ge0\), there are constants
\(A_{k,R},\lambda_{k,R}<\infty\) such that the slope-coordinate
curvature \(C\) associated with \(U_r=T_{m,r}\psi\) in
\Cref{prop:closed-invariant} satisfies
\begin{equation}\label{eq:B-derivative-growth}
 |\partial_B^kC(r,B)|
 \le
 A_{k,R}\e^{\lambda_{k,R}|x(r,B)|},
 \qquad
 0\le r\le R,\quad -1<B<1.
\end{equation}
For each
\[
 Q\in\{K,K_B,J,J_B,z\},
\]
there are constants \(A_{Q,R},\lambda_{Q,R}<\infty\) such that
\[
 |Q(r,B)|
 \le
 A_{Q,R}\e^{\lambda_{Q,R}|x(r,B)|},
 \qquad
 0\le r\le R,\quad -1<B<1.
\]
More generally, fix \(k\ge0\) and a polynomial \(P\) in \(k+3\)
variables.  There are constants
\(A_{P,k,R},\lambda_{P,k,R}<\infty\) such that
\[
 \left|
 P\left(
 x(r,B),C(r,B)^{-1},
 C(r,B),C_B(r,B),\ldots,\partial_B^kC(r,B)
 \right)
 \right|
 \le
 A_{P,k,R}\e^{\lambda_{P,k,R}|x(r,B)|}
\]
for \(0\le r\le R\) and \(-1<B<1\).  Moreover,
\begin{equation}\label{eq:C-vanishes-infinity}
 U_{r,xx}(x)\longrightarrow0
 \qquad(|x|\to\infty)
\end{equation}
for every fixed \(r\ge0\).
\end{lemma}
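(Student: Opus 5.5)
The plan is to reduce everything to spatial quantities, using the conversion formulas \eqref{eq:C-B-spatial}--\eqref{eq:C-BBB-spatial} and their higher analogues. Repeated application of \(\partial_B=C^{-1}\partial_x\) shows by induction that \(\partial_B^kC\) is a polynomial in \(C^{-1}\) and \(\partial_x^jU_r\) for \(3\le j\le k+2\). Each monomial carries at most a power \(C^{-(2k+1)}\) or so, with \(C=U_{r,xx}\).

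Two ingredients then suffice. First, every spatial derivative of positive order of \(U_r\) is bounded uniformly for \(0\le r\le R\). This follows from the tilted-expectation representation used in the proof of \Cref{prop:closed-invariant}: \(\partial_x^jU_r\) is a polynomial in the ratios \(\mathcal Z^{(i)}/\mathcal Z\), where \(\mathcal Z=P_r(\e^{m\psi})\). These ratios are tilted expectations of polynomials in \(\psi',\dots,\psi^{(i)}\), which are bounded by \Cref{prop:PDE-regularity}. Second, there is the lower bound \eqref{eq:uniform-transformed-curvature},
\[
C(r,B)=U_{r,xx}(x)\ge c_{\beta,m,R}\sech^2x\ge c_{\beta,m,R}\e^{-2|x|},
\]
so that \(C^{-1}\le c^{-1}\e^{2|x|}\). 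Combining these, each monomial of \(\partial_B^kC\) is bounded by a constant times \(\e^{2N|x|}\), where \(N\) is the maximal power of \(C^{-1}\). This gives \eqref{eq:B-derivative-growth}. The same reasoning gives the polynomial statement directly: \(x(r,B)\) itself is bounded by \(\e^{|x|}\), and products of exponential bounds are exponential bounds.

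For \(Q\in\{K,K_B,J,J_B,z\}\), the quantities \(J=-m-\tfrac12C_{BB}\), \(J_B=-\tfrac12C_{BBB}\), and \(z=-\tfrac12C_B\) are already covered by the polynomial statement. For \(K\) and \(K_B\), division by \(B\) is harmless away from \(B=0\). Near \(B=0\) we use the parity representations from the proof of \Cref{prop:closed-invariant},
\[
K(B)=-\tfrac12\int_0^1C_{BB}(\lambda B)\dd\lambda-m,\qquad K_B(B)=-\tfrac12\int_0^1\lambda C_{BBB}(\lambda B)\dd\lambda .
\]
Here we note that \(|x(r,\lambda B)|\le|x(r,B)|\), because \(x(r,\cdot)\) is odd and increasing. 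Hence the exponential bound at \(\lambda B\) is dominated by that at \(B\), and the bound for \(C_{BB},C_{BBB}\) transfers to \(K,K_B\) on all of \((-1,1)\).

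Finally, \eqref{eq:C-vanishes-infinity} follows from \eqref{eq:transformed-slope-endpoints}. For fixed \(r\), \(U_{r,x}\to\pm1\) monotonically, \(U_{r,xx}>0\), and \(U_{r,xxx}\) is bounded, so \(U_{r,xx}\) is uniformly continuous and integrable on \([0,\infty)\), with integral \(1-U_{r,x}(0)\). An integrable, nonnegative, uniformly continuous function tends to zero, and the case \(x\to-\infty\) follows by evenness.

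The only delicate point I expect is the bookkeeping showing that the power of \(C^{-1}\) in \(\partial_B^kC\) is finite and depends only on \(k\). A simple induction on \(k\) handles this: each application of \(C^{-1}\partial_x\) raises the maximal exponent of \(C^{-1}\) by at most two. The constants are also uniform in \(r\in[0,R]\), since both the derivative bounds and \(c_{\beta,m,R}\) are.
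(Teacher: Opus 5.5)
Your proposal is correct and follows essentially the same route as the paper: the uniform curvature lower bound $C\ge c_{\beta,m,R}\sech^2x$, uniformly bounded spatial derivatives of $U_r$ from the tilted-expectation formulas, induction with $\partial_B=C^{-1}\partial_x$, and the integrable-Lipschitz argument for $U_{r,xx}\to0$. The only minor difference is near $B=0$: you bound $K$ and $K_B$ using the parity integral representations together with $|x(r,\lambda B)|\le|x(r,B)|$. This is a slightly more explicit, visibly uniform-in-$r$ version of the paper's remark that $z/B=m+K$ extends smoothly through the origin while division by $B$ is harmless away from it.
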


\begin{proof}
We first obtain a curvature lower bound uniform for
\(0\le r\le R\). The tilted second-derivative identity
\eqref{eq:u-derivatives}, applied to
\(\psi=u_\nu(q_\star,\cdot)\), gives
\[
 C(r,x)\ge
 \frac{\E[\e^{m\psi(x+\sqrt rZ)}\psi''(x+\sqrt rZ)]}
 {\E\e^{m\psi(x+\sqrt rZ)}}.
\]
By \eqref{eq:curvature-bounds},
\(\psi''(y)\ge \underline{c}_\beta\sech^2y\), and \(|\psi'|\le1\). Moreover,
\[
 \sech^2(x+y)\ge\e^{-2|y|}\sech^2x.
\]
After factoring out \(\e^{m\psi(x)}\), the numerator is therefore at
least
\[
 \underline{c}_\beta\sech^2x\,
 \E\e^{-(m+2)\sqrt R|Z|},
\]
and the denominator is at most
\(\E\e^{m\sqrt R|Z|}\). Hence
\[
 C(r,x)\ge c_{\beta,m,R}\sech^2x,
 \qquad0\le r\le R,
\]
and consequently
\[
 C(r,x)^{-1}\le A_{\beta,m,R}\e^{2|x|}.
\]

Every spatial derivative of \(U_r\) is bounded uniformly for
\(0\le r\le R\), as follows from the tilted differentiation formulas
and the bounded derivatives of \(\psi\). Starting from
\(\partial_B=C^{-1}\partial_x\), induction on \(k\) shows that
\(\partial_B^kC\) is a finite sum of products of bounded spatial
derivatives and powers of \(C^{-1}\). This proves
\eqref{eq:B-derivative-growth}; the assertion for polynomial
combinations follows in the same way. The definitions of \(J\) and
\(z\) involve only \(B\)-derivatives of \(C\). Since \(z=(m+K)B\), the quotient \(z/B=m+K\) extends smoothly
through \(B=0\) by the oddness of \(z\). Outside a fixed neighborhood
of the origin, \(B\) is bounded away from zero, so division by \(B\)
is bounded. The asserted estimates for
\(K,K_B,J,J_B,z\) follow.

For fixed \(r\), the function \(x\mapsto U_{r,xx}(x)\) is nonnegative
and Lipschitz because \(U_{r,xxx}\) is bounded. In addition,
\[
 \int_0^\infty U_{r,xx}(x)\dd x
 =\lim_{x\to\infty}U_{r,x}(x)-U_{r,x}(0)=1
\]
by evenness and \eqref{eq:transformed-slope-endpoints}. A nonnegative
integrable Lipschitz function converges to zero at infinity: otherwise
there would be \(\varepsilon>0\), a sequence \(x_n\to\infty\), and
pairwise disjoint intervals of a common positive length centered at
\(x_n\) on which \(U_{r,xx}\ge\varepsilon/2\), contradicting
integrability. This proves \eqref{eq:C-vanishes-infinity}, and evenness
gives the limit at \(-\infty\).
\end{proof}

\section{A first-gap crossing theorem}\label{s:first-gap-crossing}

This section establishes the analytic crossing principle used to rule
out a first gap in the support. In \Cref{sec:stochastic}, we represent
the Cole--Hopf evolution by a change of measure on Brownian paths and
introduce the moment curve
\[
    q(s)=\E_{\mathbb Q}\bigl[U_x(s,X_s)^2\bigr].
\]
\Cref{prop:transversality} proves the key local property of this curve:
\(q''(0)<0\), while every interior zero of \(q''\) is crossed strictly
from negative to positive, since
\[
    q''(s)=0\quad\Longrightarrow\quad q'''(s)>0.
\]
Its proof uses the slope-coordinate invariant from
\Cref{prop:closed-invariant} to control the derivative of a weighted
representation of \(q''\). Finally, \Cref{cor:first-gap-crossing}
converts this one-crossing property into the strict endpoint inequality
\[
    q(t)<tq'(0)=t\theta^2
\]
whenever \[2\int_0^t q(s)\dd s=tq(t).\]
In the next section, we show that the
variational conditions at the endpoints of a hypothetical first support
gap imply this integral identity, while the variational stability bound \eqref{eq:left-stability} 
contradicts the resulting strict inequality.

Fix \(0<m<1\), \(q_\star>0\), and a probability measure
\begin{equation}\label{eq:first-gap-measure}
 \nu=m\delta_0+(1-m)\rho,
 \qquad \supp\rho\subseteq[q_\star,1].
\end{equation}
Let
\[
 \psi(x)=u_\nu(q_\star,x)
\]
and fix \(t>0\).

\subsection{Brownian representation}\label{sec:stochastic}

Set
\begin{equation}\label{eq:def-h-U}
 h=\e^{m\psi},
 \qquad
 U(s,x)=\frac1m\log P_{t-s}h(x)
 =T_{m,t-s}\psi(x),
 \qquad 0\le s\le t.
\end{equation}
The bound \(|\psi'|\le1\) implies that \(h\) has at most
exponential growth, so every heat-semigroup expression below is finite.
Then
\begin{equation}\label{eq:backward-PDE}
 U_s=-\frac12\bigl(U_{xx}+mU_x^2\bigr).
\end{equation}
By \Cref{prop:PDE-regularity} and the tilted differentiation formulas
used in the proof of \Cref{lem:analytic-class}, every positive-order
\(x\)-derivative of \(U\) is bounded uniformly on
\([0,t]\times\R\).

Let \((W_s)_{0\le s\le t}\) be a Brownian motion with \(W_0=0\), and let
\((\mathcal F_s)\) be its completed natural filtration.  Define
\begin{equation}\label{eq:Ls}
 \mathcal L_s=\frac{\e^{mU(s,W_s)}}{P_th(0)}
 =\frac{P_{t-s}h(W_s)}{P_th(0)}.
\end{equation}
The Markov property gives
\[
 \mathcal L_s=\frac{\E[h(W_t)\mid\mathcal F_s]}{\E h(W_t)},
\]
so \(\mathcal L\) is a positive martingale with \(\mathcal L_0=1\).  Equivalently,
It\^o's formula and \eqref{eq:backward-PDE} give
\begin{equation}\label{eq:L-SDE}
 \dd \mathcal L_s=mU_x(s,W_s)\mathcal L_s\dd W_s.
\end{equation}
Define a probability measure \(\mathbb Q\) on \(\mathcal F_t\) by
\begin{equation}\label{eq:path-Q}
 \frac{\dd\mathbb Q}{\dd\mathbb P}=\mathcal L_t.
\end{equation}
Since \(mU_x\) is bounded, Girsanov's theorem applies, and
\begin{equation}\label{eq:Q-BM}
 \widetilde W_s=W_s-\int_0^s mU_x(r,W_r)\dd r
\end{equation}
is a Brownian motion under \(\mathbb Q\).  Write \(X_s=W_s\) for the
coordinate process under \(\mathbb Q\), and set
\begin{equation}\label{eq:Ms-Cs}
 M_s=U_x(s,X_s),
 \qquad
 C_s=U_{xx}(s,X_s).
\end{equation}
Then
\begin{equation}\label{eq:X-SDE}
 \dd X_s=mM_s\dd s+\dd\widetilde W_s.
\end{equation}

Applying It\^o's formula to \(U_x(s,X_s)\) and
\(U_{xx}(s,X_s)\) yields
\begin{equation}\label{eq:M-C-SDE}
 \dd M_s=C_s\dd\widetilde W_s,
 \qquad
 \dd C_s=-mC_s^2\dd s
 +U_{xxx}(s,X_s)\dd\widetilde W_s.
\end{equation}
All spatial derivatives appearing in these stochastic integrals are
bounded, so the local martingales are square-integrable.

Define
\begin{equation}\label{eq:q-theta}
 q(s)=\E_{\mathbb Q}M_s^2,
 \qquad
 \theta=U_{xx}(0,0).
\end{equation}
Strict convexity gives \(\theta>0\).  From the first equation in
\eqref{eq:M-C-SDE},
\begin{equation}\label{eq:q-initial-derivative}
 q(0)=0,
 \qquad
 q'(s)=\E_{\mathbb Q}C_s^2,
 \qquad
 q'(0)=\theta^2.
\end{equation}

\subsection{Zeros of \texorpdfstring{$q''$}{the second derivative of q}}\label{sec:crossing}

\begin{proposition}\label{prop:transversality}
The function \(q\) belongs to
\(C^2([0,t])\cap C^3((0,t))\).  Moreover,
\begin{equation}\label{eq:qpp-zero}
 q''(0)=-2m\theta^3<0,
\end{equation}
and, for every \(s\in(0,t)\),
\begin{equation}\label{eq:transversality}
 q''(s)=0
 \quad\Longrightarrow\quad
 q'''(s)>0.
\end{equation}
\end{proposition}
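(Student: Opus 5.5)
The plan is to work with the stochastic representation of \Cref{sec:stochastic} and to mirror the computation of \Cref{lem:Gamma-derivatives}, with \(\beta=1\) and constant parameter \(m\). Set \(D_s=U_{xxx}(s,X_s)\) and \(A_s=U_{xxxx}(s,X_s)\). Differentiating \eqref{eq:backward-PDE} in \(x\) and applying It\^o's formula under \(\mathbb Q\) should give
\[
 \dd D_s=-3mC_sD_s\dd s+A_s\dd\widetilde W_s.
\]
Combined with \eqref{eq:M-C-SDE}, this yields
\[
 q''(s)=\E_{\mathbb Q}[D_s^2-2mC_s^3],
 \qquad
 q'''(s)=\E_{\mathbb Q}[A_s^2-12mC_sD_s^2+6m^2C_s^4].
\]
Boundedness of all spatial derivatives (\Cref{lem:endpoint-growth} and the tilted formulas) makes the martingale terms vanish in expectation. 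Dominated convergence then gives continuity, hence \(q\in C^2([0,t])\cap C^3((0,t))\). At \(s=0\) we have \(X_0=0\), and \(U(0,\cdot)\) is even, so \(D_0=0\). This gives \eqref{eq:qpp-zero} immediately.

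For \eqref{eq:transversality}, I would rewrite both expectations in the slope coordinate \(B=U_x(s,x)\), using \(\partial_B=C^{-1}\partial_x\) as in \eqref{eq:C-B-spatial}--\eqref{eq:C-BB-spatial}. This gives \(D=CC_B=-2Cz\) and \(A=C^2C_{BB}+CC_B^2=-2C^2(J+m)+4Cz^2\). Thus
\[
 q''(s)=\E_{\mathbb Q}\bigl[4C^2z^2-2mC^3\bigr],
\]
and \(q'''\) becomes an expectation of a polynomial in \(C,z,J\). The idea is to find a weight \(\lambda(s)\) and an integration by parts in \(B\) against the law \(w(s,B)\,\dd B\) of \(M_s\) under \(\mathbb Q\). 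Since \(M\) is a martingale with \(\dd\langle M\rangle=C^2\dd s\), \(w\) solves the forward equation \(w_s=\tfrac12(C^2w)_{BB}\). The integration by parts should convert \(q'''(s)-\lambda(s)q''(s)\) into an expectation of a manifestly nonnegative expression built from \(J\), \(J_B\), \(K\), and \(3zJ-CJ_B\).

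The natural first attempt is to integrate by parts the term \(\E[4C^2z^2]\), using \(z_B=m+J\) (from \eqref{eq:K-J-identities}). This trades \(z^2\) for \(J\)-terms. The sign conditions \(K,K_B,J,J_B\ge0\) and \(CJ_B\le3zJ\) from \Cref{prop:closed-invariant} then control the mixed terms. Strictness should come from \(J\not\equiv0\) or from \(C>0\), via an integrand strictly positive on a set of positive \(w\)-measure, for example a term like \(2m^2C^4\). At a zero of \(q''\), the identity then gives \(q'''(s)>0\).

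Boundary terms at \(B=\pm1\) in these integrations by parts are handled with the growth bounds of \Cref{lem:endpoint-growth}, \eqref{eq:C-vanishes-infinity}, and the Gaussian decay of the law of \(X_s\) (drift bounded by \(m\)). Polynomial combinations grow at most like \(\e^{\lambda|x|}\), while the density of \(X_s\) decays like a Gaussian, so all boundary terms vanish.

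The main obstacle is the algebra of the weighted representation: choosing \(\lambda(s)\) and the integration by parts so that the remainder lies exactly in the cone of \eqref{eq:closed-cone}. I expect the inequality \(CJ_B\le3zJ\) to be needed precisely to absorb the cross term \(-12mCD^2\). I would first try \(\lambda=\) a multiple of \(mC\)-weighted averages, possibly working with \(\E[C^{\gamma}\cdot]\) for a suitable power \(\gamma\) suggested by the \(C^{3/2}J\) structure.
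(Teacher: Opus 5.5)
Your treatment of the regularity and of \(q''(0)=-2m\theta^3\) is fine; it is the \(\beta=1\), constant-\(m\) version of \Cref{lem:Gamma-derivatives}. The gap is in \eqref{eq:transversality}. There you describe the identity you hope to find rather than giving an argument, and the route as described cannot succeed.

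Write \(q''(s)=\E_{\mathbb Q}g_2(M_s)\) and \(q'''(s)=\E_{\mathbb Q}g_3(M_s)\), with \(g_2=2C^2\Phi\) and \(g_3=A^2-12mCD^2+6m^2C^4\) in slope coordinates. Suppose the only information used is the cone \eqref{eq:closed-cone} for the profile. Then the implication must hold for laws of \(M_s\) concentrated near two points, and this forces a pointwise inequality \(g_3>\lambda g_2\) for some constant \(\lambda\). That inequality is false on admissible profiles. Take \(U=\log\cosh\) with active parameter \(m=1/2\), so that \(K=J=1/2\), \(J_B=0\), and the cone holds. At \(B^2=1/5\) one has \(C=4/5\) and \(\Phi=0\), but \(g_3=-896/625<0\). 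By continuity the same happens for \(U(s,\cdot)=T_{m,t-s}\log\cosh\) with \(s\) near \(t\), which is a genuine instance of the proposition (\(\nu=m\delta_0+(1-m)\delta_1\)). So the specific law of \(X_s\) under \(\mathbb Q\) must enter. An integration by parts against that law requires \(w_B/w\), and the forward equation \(w_s=\frac12(C^2w)_{BB}\) does not supply it.

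The missing input is the explicit density. Because the gap starts at time \(0\) from \(X_0=0\), the Girsanov factor \(\mathcal L_s\) gives \(p_s(x)\propto\exp(mU(s,x)-x^2/(2s))\). For \(w=2p_sC\) this yields \(w_B/w=-(N+z)/C\) with \(N=x/s+KB\), and \((\log w)_s\) at fixed \(B\) contains the terms \(-\frac1{2s}+\frac{x^2}{2s^2}\).

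The paper then differentiates \(\int_0^1w\Phi\dd B\) at fixed slope, using \(\Phi_s=CJ\Phi+G\) with \(G\ge0\). It exploits the centering \(\int_0^1w\Phi\dd B=0\) in a non-pointwise way:
\begin{itemize}
\item The \(-1/(2s)\) term drops out.
\item The integrals \(\int w\,x^2\Phi\) and \(\int wR_1\Phi\) become covariances of nondecreasing functions with the strictly increasing \(\Phi\). The first is strictly positive, and this Gaussian \(x^2\) term, not a term like \(2m^2C^4\), is the source of strictness.
\item The remaining integral \(\int w(R_0\Phi+G)\) is integrated by parts against the tail \(F(B)=\int_B^1w\Phi\). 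This produces \(\int w[2CJ_B(\tau-Cz)+zJ(6Cz-5\tau)]\) with \(\tau=F/w\).
\item The nonlocal bounds \(0\le\tau\le Cz\) follow from the centering and the strict monotonicity of \(L=CJ-z(N+z)\), via \(\mathcal H=wCz-F\). Only after these bounds does \(CJ_B\le3zJ\) close the argument.
\end{itemize}
Your plan contains none of these ingredients: the explicit Gaussian-tilted density, the centered covariance inequalities, or the tail-ratio bound. In the paper the sign does not come from a pointwise nonnegative integrand of the kind you propose to find.
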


\begin{proof}
Applying It\^o's formula to \(C_s^2\) in
\eqref{eq:M-C-SDE} gives
\begin{equation}\label{eq:C-square-Ito}
 \dd(C_s^2)
 =\bigl(U_{xxx}(s,X_s)^2-2mC_s^3\bigr)\dd s
 +2C_sU_{xxx}(s,X_s)\dd\widetilde W_s.
\end{equation}
The coefficients in this identity are bounded and continuous.
Consequently,
\begin{equation}\label{eq:qpp-expectation}
 q''(s)=\E_{\mathbb Q}\!\left[
 U_{xxx}(s,X_s)^2-2mC_s^3\right],
 \qquad 0\le s\le t,
\end{equation}
and \(q\in C^2([0,t])\).  At \(s=0\), evenness of \(U(0,\cdot)\)
gives \(U_{xxx}(0,0)=0\), while \(C_0=\theta\).  Thus
\eqref{eq:qpp-expectation} gives \eqref{eq:qpp-zero}.

We now analyze \eqref{eq:qpp-expectation} for \(s\in(0,t)\).  For each
such \(s\), introduce the slope coordinate
\begin{equation}\label{eq:slope-coordinate-trans}
 B=U_x(s,x),
 \qquad
 C=U_{xx}(s,x),
 \qquad
 x=x(s,B),
 \qquad 0\le B<1.
\end{equation}
In the rest of the proof, \(\partial_B\) denotes differentiation at
fixed \(s\), and \(\partial_s\) denotes differentiation at fixed
\(B\).  Define
\begin{equation}\label{eq:z-K-J-trans}
 z=-\frac12C_B=(m+K)B,
 \qquad
 J=K+BK_B.
\end{equation}
By \Cref{prop:closed-invariant}, applied with \(r=t-s\),
we have
\begin{equation}\label{eq:ineq-trans}
 K,K_B,J,J_B\ge0,
 \qquad
 CJ_B\le3zJ.
\end{equation}
Set
\begin{equation}\label{eq:N-Phi}
 N=\frac{x}{s}+KB,
 \qquad
 \Phi=2z^2-mC.
\end{equation}
By parity, \(C\), \(K\), \(J\), and \(\Phi\) extend evenly to
\((-1,1)\), while \(z\) extends oddly.  We use these extensions when
the slope variable is random.

The restriction of \(\mathbb Q\) to \(\mathcal F_s\) has density \(\mathcal L_s\)
with respect to \(\mathbb P\).  Since \(W_s\) is Gaussian with variance
\(s\), the density of \(X_s\) under \(\mathbb Q\) is
\begin{equation}\label{eq:ps-density}
 p_s(x)
 =\frac{1}{P_th(0)\sqrt{2\pi s}}
 \exp\left(mU(s,x)-\frac{x^2}{2s}\right).
\end{equation}
This density is even and strictly positive.  Define the positive weight
\begin{equation}\label{eq:w-weight}
 w(s,B)=2p_s(x(s,B))C(s,B),
 \qquad 0<B<1.
\end{equation}
Since \(U_{xxx}=C_x=CC_B=-2Cz\), equation
\eqref{eq:qpp-expectation} becomes
\[
 q''(s)=2\E_{\mathbb Q}\!\left[C_s^2\Phi(M_s)\right].
\]
Changing variables on the positive half-line by \(\dd B=C\dd x\) and
using evenness gives
\begin{equation}\label{eq:qpp-w}
 \frac12q''(s)=\int_0^1w(s,B)\Phi(s,B)\dd B.
\end{equation}

We next compute the derivatives of the terms in \eqref{eq:qpp-w}.
From \(x_B=C^{-1}\), \(J=(BK)_B\), and \(z_B=m+J\), one obtains
\begin{equation}\label{eq:N-w-B}
 N_B=\frac1{sC}+J,
 \qquad
 \frac{w_B}{w}=-\frac{N+z}{C},
\end{equation}
and
\[
 \Phi_B=2z(2J+3m).
\]
For \(0<B<1\), \eqref{eq:ineq-trans} and
\(z=(m+K)B\) give
\[
 z\ge mB>0,
 \qquad
 2J+3m>0.
\]
Therefore
\begin{equation}\label{eq:Phi-B}
 \Phi_B=2z(2J+3m)>0,
 \qquad 0<B<1.
\end{equation}
Since the forward Cole--Hopf time is \(r=t-s\),
\eqref{eq:x-r-forward} and \eqref{eq:C-r-fixed-B} give
\begin{equation}\label{eq:x-C-s-fixed-B}
 x_s=-KB,
 \qquad
 C_s=C^2J.
\end{equation}
It follows that
\begin{equation}\label{eq:z-s}
 z_s=2CzJ-\frac12C^2J_B
\end{equation}
and
\begin{equation}\label{eq:Phi-s}
 \Phi_s=CJ\Phi+G,
 \qquad
 G=2Cz(3zJ-CJ_B)\ge0.
\end{equation}
Differentiating \eqref{eq:w-weight} at fixed \(B\), and using
\eqref{eq:backward-PDE} and \eqref{eq:x-C-s-fixed-B}, gives
\begin{align}
 (\log w)_s={}&-\frac1{2s}+CJ-\frac m2C+KBN
 \label{eq:logw-s}\\
 &-\frac12z^2-\frac12K^2B^2+\frac{x^2}{2s^2}.\notag
\end{align}
Define
\begin{equation}\label{eq:R1-R0}
 R_1=\frac{KBx}{s}+\frac12(KB)^2,
 \qquad
 R_0=2CJ-\frac m2C-\frac12z^2.
\end{equation}
If
\begin{equation}\label{eq:centered-Phi}
 \int_0^1w\Phi\dd B=0,
\end{equation}
then the term \(-1/(2s)\) in \eqref{eq:logw-s} makes no contribution
after integration.  Combining \eqref{eq:Phi-s} and
\eqref{eq:logw-s} yields
\begin{equation}\label{eq:Fprime-decomposition}
 \frac{\dd}{\dd s}\int_0^1w\Phi\dd B
 =\int_0^1w\left(
 \frac{x^2}{2s^2}\Phi
 +R_1\Phi+R_0\Phi+G
 \right)\dd B.
\end{equation}

We justify the differentiation before estimating its right-hand side.
Fix a compact interval \(I\Subset(0,t)\).  Since \(|U_x|\le1\),
\[
 U(s,x)\le U(s,0)+|x|,
\]
and \eqref{eq:ps-density} gives
\begin{equation}\label{eq:p-dominating-bound}
 p_s(x)
 \le C_I\exp\left(-\frac{x^2}{2t}+m|x|\right),
 \qquad s\in I.
\end{equation}
At fixed \(B\),
\[
 \partial_s(w\Phi)
 =
 w\bigl((\log w)_s\Phi+\Phi_s\bigr).
\]
By \eqref{eq:Phi-s}, \eqref{eq:logw-s}, and
\Cref{lem:endpoint-growth}, there are constants \(A_I,C_I<\infty\)
such that, after the change of variables \(\dd B=C\dd x\),
\begin{equation}\label{eq:wsPhi-dominating-bound}
 \left|\partial_s(w(s,B)\Phi(s,B))\right|\dd B
 \le
 C_I(1+x^2)
 \exp\left(-\frac{x^2}{2t}+A_I|x|\right)\dd x,
 \qquad s\in I.
\end{equation}
The right-hand side is integrable on \(\R\).

For \(0<\varepsilon<1\), all quantities are smooth and uniformly
bounded on \(I\times[0,1-\varepsilon]\), so differentiation under the
integral is valid on that interval.  Moreover,
\eqref{eq:transformed-slope-endpoints} implies that
\[
 \inf_{s\in I}x(s,B)\longrightarrow\infty
 \qquad\text{as }B\uparrow1.
\]
Consequently, \eqref{eq:wsPhi-dominating-bound} shows that
\[
 \sup_{s\in I}
 \int_{1-\varepsilon}^1
 \left|\partial_s(w(s,B)\Phi(s,B))\right|\dd B
 \longrightarrow0
 \qquad\text{as }\varepsilon\downarrow0.
\]
Passing first to the limit in the truncated integrals and then letting
\(\varepsilon\downarrow0\) proves that
\[
 \frac{\dd}{\dd s}\int_0^1w\Phi\dd B
 =
 \int_0^1\partial_s(w\Phi)\dd B.
\]
The same bound, together with pointwise continuity in \(s\), shows that
this derivative is continuous on \(I\).  Since \(I\Subset(0,t)\) was
arbitrary, the right side of \eqref{eq:qpp-w} is continuously
differentiable on \((0,t)\), and hence \(q\in C^3((0,t))\).

Assume now that \eqref{eq:centered-Phi} holds.  By
\eqref{eq:Phi-B}, \(\Phi\) is strictly increasing.  The function
\(B\mapsto x(s,B)^2\) is also strictly increasing on \((0,1)\).
Writing \(W_0=\int_0^1w\dd B\), the covariance identity gives
\begin{align}
 \int_0^1w\Phi x^2\dd B
 &=\frac1{2W_0}\int_0^1\int_0^1w(B)w(D)
 \label{eq:strict-covariance}\\
 &\quad\times
 [\Phi(B)-\Phi(D)][x(B)^2-x(D)^2]\dd B\dd D>0.\notag
\end{align}
The inequality is strict because the integrand is positive whenever
\(B\ne D\), and \(w\) is strictly positive.

The function \(R_1\) is nondecreasing.  Indeed,
\begin{equation}\label{eq:R1-derivative}
 (R_1)_B
 =\frac{Jx}{s}+\frac{KB}{sC}+(KB)J\ge0.
\end{equation}
Applying \eqref{eq:strict-covariance} with \(x^2\) replaced by the
nondecreasing function \(R_1\) gives
\begin{equation}\label{eq:R1-nonnegative}
 \int_0^1wR_1\Phi\dd B\ge0.
\end{equation}

It remains to estimate the last two terms in
\eqref{eq:Fprime-decomposition}.  Define
\begin{equation}\label{eq:tail-ratio}
 F(B)=\int_B^1w(r)\Phi(r)\dd r,
 \qquad
 \tau(B)=\frac{F(B)}{w(B)}.
\end{equation}
The function \(\Phi\) is strictly increasing and
\(\Phi(0)=-mC(0)<0\).  Moreover, \(z=(m+K)B\ge mB\)
and \(C\to0\) by \Cref{lem:endpoint-growth}, so
\(\liminf_{B\uparrow1}\Phi(B)\ge2m^2>0\).
Let \(B_\Phi\) be its unique zero.  If \(B\le B_\Phi\), then
\[
 F(B)=-\int_0^Bw(r)\Phi(r)\dd r\ge0
\]
by \eqref{eq:centered-Phi}; if \(B\ge B_\Phi\), then the defining
integral for \(F(B)\) is nonnegative.  Thus \(\tau\ge0\).

We next prove
\begin{equation}\label{eq:tau-upper}
 \tau\le Cz.
\end{equation}
Set
\begin{equation}\label{eq:Hcal-def}
 \mathcal H(B)=w(B)C(B)z(B)-F(B).
\end{equation}
Equations \eqref{eq:N-w-B} and
\[
 (Cz)_B=CJ-\Phi
\]
give
\begin{equation}\label{eq:Hcal-B}
 \mathcal H_B=wL,
 \qquad
 L=CJ-z(N+z).
\end{equation}
By \eqref{eq:ineq-trans},
\begin{equation}\label{eq:CJ-B-bound}
 (CJ)_B=-2zJ+CJ_B\le zJ.
\end{equation}
On the other hand,
\begin{align}
 [z(N+z)]_B
 &=(m+J)(N+z)
 \label{eq:zNz-derivative}\\
 &\quad
 +z\left(\frac1{sC}+J+m+J\right)>zJ
 \qquad (0<B<1).\notag
\end{align}
Hence \(L_B<0\) on \((0,1)\).

By \eqref{eq:centered-Phi}, \(F(0)=0\), and \(z(0)=0\); therefore
\(\mathcal H(0)=0\).  At the other endpoint, \(F(B)\to0\) by
integrability, while
\[
 w(B)C(B)z(B)
 =2p_s(x(B))C(B)^2z(B)
 =-p_s(x(B))C(B)U_{xxx}(s,x(B))
 \longrightarrow0.
\]
Here we used \eqref{eq:p-dominating-bound}, the boundedness of
\(C\) and \(U_{xxx}\), and \(x(B)\to\infty\).  Thus
\(\mathcal H(1)=0\).

By \Cref{lem:endpoint-growth}, \(L=CJ-z(N+z)\) is bounded by
\(\e^{A|x(B)|}\) for a suitable \(A\).  Therefore, after the change of
variables \(\dd B=C\dd x\),
\[
 \int_0^1w(B)|L(B)|\dd B
 =
 2\int_0^\infty
 p_s(x)C(s,x)^2
 |L(s,U_x(s,x))|\dd x
 <\infty
\]
by \eqref{eq:p-dominating-bound}.

For \(0<\varepsilon<B<1\), all quantities are smooth, and
\eqref{eq:Hcal-B} gives
\[
 \mathcal H(B)-\mathcal H(\varepsilon)
 =
 \int_\varepsilon^B w(r)L(r)\dd r.
\]
Since \(wL\in L^1(0,1)\) and
\(\mathcal H(\varepsilon)\to\mathcal H(0)=0\) as
\(\varepsilon\downarrow0\), letting \(\varepsilon\downarrow0\) yields
\[
 \mathcal H(B)
 =
 \int_0^B w(r)L(r)\dd r,
 \qquad 0\le B<1.
\]
Thus \(\mathcal H\) extends to an absolutely continuous function on
\([0,1]\), with derivative \(wL\) almost everywhere.  Letting
\(B\uparrow1\) and using \(\mathcal H(1)=0\) gives
\begin{equation}\label{eq:wL-centered}
 \int_0^1w(B)L(B)\dd B
 =
 \mathcal H(1)-\mathcal H(0)
 =0.
\end{equation}

Since \(L\) is strictly decreasing and \(w>0\), \(L\) takes both positive
and negative values and has a unique zero \(B_L\in(0,1)\).  If
\(B\le B_L\), then
\[
 \mathcal H(B)=\int_0^Bw(r)L(r)\dd r\ge0.
\]
If \(B\ge B_L\), then
\[
 \mathcal H(B)=-\int_B^1w(r)L(r)\dd r\ge0.
\]
Thus \(F\le wCz\), which proves \eqref{eq:tau-upper}.

We next justify the required integration by parts.  By
\Cref{lem:endpoint-growth}, for suitable constants \(A,C<\infty\),
\[
 |\Phi(B)|+|R_0(B)|+|(R_0)_B(B)|
 \le C\e^{Ax(B)}
\]
for \(B\) sufficiently close to \(1\).  Since
\(\dd B=C(s,x)\dd x\), the definition of \(F\) gives
\begin{align*}
 |F(B)|
 &\le
 \int_B^1w(r)|\Phi(r)|\dd r\\
 &=2\int_{x(B)}^\infty
 p_s(y)C(s,y)^2
 \left|\Phi(s,U_x(s,y))\right|\dd y\\
 &\le
 C\int_{x(B)}^\infty p_s(y)\e^{Ay}\dd y.
\end{align*}
By \eqref{eq:p-dominating-bound}, completing the square shows that,
for every \(L>0\),
\begin{equation}\label{eq:F-superexponential-tail}
 |F(B)|\le C_L\e^{-Lx(B)}
 \qquad\text{as }B\uparrow1.
\end{equation}
Choosing \(L>A\), we obtain
\begin{equation}\label{eq:FR0-endpoint}
 F(B)R_0(B)\longrightarrow0
 \qquad\text{as }B\uparrow1.
\end{equation}
At the other endpoint, \(F(0)=0\) by
\eqref{eq:centered-Phi}.

Moreover, \eqref{eq:tau-upper} and
\Cref{lem:endpoint-growth} imply that
\[
 |F(R_0)_B|
 =w\tau|(R_0)_B|
 \le wCz|(R_0)_B|
\]
is integrable on \((0,1)\), by the same change of variables and Gaussian
bound.  Therefore, integrating by parts first on \([0,b]\) and then
letting \(b\uparrow1\), using \eqref{eq:FR0-endpoint}, gives
\begin{align}
 \int_0^1w\Phi R_0\dd B
 &=-\int_0^1F'R_0\dd B\notag\\
 &=\int_0^1F(R_0)_B\dd B
 =\int_0^1w\tau(R_0)_B\dd B.
 \label{eq:tail-integration-by-parts}
\end{align}

Differentiating the definition of \(R_0\) and using
\(C_B=-2z\), \(z_B=m+J\), gives
\begin{equation}\label{eq:R0-B}
 (R_0)_B=2CJ_B-5zJ.
\end{equation}
Combining \eqref{eq:tail-integration-by-parts},
\eqref{eq:Phi-s}, and \eqref{eq:R0-B}, we obtain
\begin{align}
 \int_0^1w(R_0\Phi+G)\dd B
 =\int_0^1w\left[
 2CJ_B(\tau-Cz)+zJ(6Cz-5\tau)
 \right]\dd B.
 \label{eq:last-terms-exact}
\end{align}
By \eqref{eq:tau-upper}, \(\tau-Cz\le0\).  Since
\(CJ_B\le3zJ\), multiplication by the nonpositive factor
\(2(\tau-Cz)\) reverses the inequality and gives
\begin{align}
 \int_0^1w(R_0\Phi+G)\dd B
 &\ge\int_0^1w\left[
 6zJ(\tau-Cz)+zJ(6Cz-5\tau)
 \right]\dd B\notag\\
 &=\int_0^1wzJ\tau\dd B\ge0.
 \label{eq:last-terms-positive}
\end{align}

In \eqref{eq:Fprime-decomposition}, the \(R_1\)-term is nonnegative by
\eqref{eq:R1-nonnegative}, and the last two terms are nonnegative by
\eqref{eq:last-terms-positive}.  The remaining covariance term is
strictly positive by \eqref{eq:strict-covariance}, and its coefficient
\(1/(2s^2)\) is positive.  Therefore
\[
 \int_0^1w\Phi\dd B=0
 \quad\Longrightarrow\quad
 \frac{\dd}{\dd s}\int_0^1w\Phi\dd B>0.
\]
By \eqref{eq:qpp-w}, this is precisely
\(q''(s)=0\Longrightarrow q'''(s)>0\).
\end{proof}

\subsection{Consequence of the crossing property}

\begin{corollary}\label{cor:first-gap-crossing}
If
\begin{equation}\label{eq:q-equality}
 2\int_0^tq(s)\dd s=tq(t),
\end{equation}
then
\begin{equation}\label{eq:q-final-bound}
 q(t)<t\theta^2.
\end{equation}
\end{corollary}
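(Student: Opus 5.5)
The plan is to convert the statement into a sign-pattern argument for the chord defect
\[
 g(s)=q(s)-\frac{s}{t}\,q(t),\qquad 0\le s\le t,
\]
using only \Cref{prop:transversality} and \eqref{eq:q-initial-derivative}. Since \(q(0)=0\), one has \(g(0)=g(t)=0\). The hypothesis \eqref{eq:q-equality} says exactly that \(\int_0^t g(s)\dd s=0\). Moreover \(g''=q''\) and \(g'(0)=\theta^2-q(t)/t\). So the desired inequality \eqref{eq:q-final-bound} is equivalent to \(g'(0)>0\).

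\emph{Step 1: the sign structure of \(q''\).} By \eqref{eq:qpp-zero}, \(q''(0)<0\), and \(q''\) is continuous on \([0,t]\). By \eqref{eq:transversality}, every zero \(s\in(0,t)\) of \(q''\) has \(q'''(s)>0\). Hence each such zero is isolated, and \(q''\) is negative immediately to its left and positive immediately to its right.

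I claim there is at most one zero in \((0,t)\). Suppose \(s_1<s_2\) are two consecutive zeros in \((0,t)\). Then \(q''>0\) just right of \(s_1\) and \(q''<0\) just left of \(s_2\). By the intermediate value theorem \(q''\) vanishes somewhere strictly between them. Taking the infimum of the zeros in \((s_1+\varepsilon,s_2)\), with \(\varepsilon\) small enough that \(q''>0\) on \((s_1,s_1+\varepsilon]\), gives a zero approached from the positive side. That contradicts the upward-crossing property. Consequently there is \(s_0\in(0,t]\) such that \(q''<0\) on \((0,s_0)\) and \(q''>0\) on \((s_0,t)\), where \(s_0=t\) covers the case of no interior zero. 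Thus \(g\) is strictly concave on \([0,s_0]\) and convex on \([s_0,t]\).

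\emph{Step 2: contradiction if \(g'(0)\le0\).} Suppose \(g'(0)\le 0\).
\begin{enumerate}
\item On \((0,s_0]\), strict concavity gives \(g'(s)<g'(0)\le0\), so \(g<0\) on \((0,s_0]\) and \(g'(s_0)<0\).
\item If \(s_0=t\), this already contradicts \(g(t)=0\).
\item If \(s_0<t\), then \(g\) is convex on \([s_0,t]\) with \(g(s_0)<0=g(t)\). It therefore lies below its chord, so \(g<0\) on \([s_0,t)\).
\end{enumerate}
In every case \(g<0\) on \((0,t)\), so \(\int_0^t g<0\), contradicting \eqref{eq:q-equality}. Hence \(g'(0)>0\), which is \(q(t)<t\theta^2\).

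The only delicate point is Step 1: it must use just continuity of \(q''\) on \([0,t]\) and the \(C^3\) regularity on the open interval, both guaranteed by \Cref{prop:transversality}. In particular no assumption should be made about the behaviour of \(q''\) at \(t\). The argument above only uses zeros in the open interval and the sign of \(q''\) at \(0\), so I expect it to go through without further input.
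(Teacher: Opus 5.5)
Your proof is correct. Step 1 matches the paper's argument, including the infimum device that rules out a second zero of $q''$. Step 2 finishes differently.

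The paper integrates by parts. This rewrites the hypothesis as $\int_0^t s(t-s)q''(s)\dd s=0$ and the target as $\int_0^t (t-s)q''(s)\dd s<0$. It then uses the hypothesis to show that $q''$ has exactly one zero $c$. Next it splits $(t-s)q''$ into the measures $\mu_\pm$ on either side of $c$ and compares their masses through their equal first moments. This yields the quantitative identity $q(t)-t\theta^2=\mu_+((0,t))-\mu_-((0,t))$.

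You pass instead to the chord defect $g$. The same two functionals appear as $\int_0^t g\dd s=-\tfrac12\int_0^t s(t-s)q''(s)\dd s$ and $g'(0)=-\tfrac1t\int_0^t(t-s)q''(s)\dd s$. You then argue by contradiction from the concave-then-convex shape: $g'(0)\le0$ forces $g<0$ on $(0,t)$.

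Your route avoids the auxiliary measures. It also handles the no-zero case $s_0=t$ uniformly, without first using the hypothesis to show that $q''$ must change sign. The cost is that it is purely qualitative. The paper's version gives an exact expression for the defect $q(t)-t\theta^2$. It also makes explicit that the corollary is a Chebyshev-type comparison of the kernels $s(t-s)$ and $t-s$ against a function with a single sign change.
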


\begin{proof}
Since \(q(0)=0\), integration by parts twice gives
\[
 \int_0^ts(t-s)q''(s)\dd s
 =tq(t)-2\int_0^tq(s)\dd s.
\]
Thus \eqref{eq:q-equality} is equivalent to
\begin{equation}\label{eq:qpp-weighted-zero}
 \int_0^ts(t-s)q''(s)\dd s=0.
\end{equation}

Let \(g=q''\).  By \Cref{prop:transversality}, \(g(0)<0\), and
\(g'(s)>0\) whenever \(s\in(0,t)\) and \(g(s)=0\).  We first show that
\(g\) has at most one zero in \((0,t)\).  Suppose instead that
\(a<b\) are zeros.  Since \(g'(a)>0\), one has \(g(s)>0\) for all
\(s>a\) sufficiently close to \(a\).  Let
\[
 c=\inf\{s\in(a,b]:g(s)=0\}.
\]
Then \(g>0\) on \((a,c)\) and \(g(c)=0\).  Consequently,
\[
 \limsup_{h\downarrow0}
 \frac{g(c)-g(c-h)}{h}\le0,
\]
which contradicts \(g'(c)>0\).  Hence \(g\) has at most one zero.

The weight \(s(t-s)\) is strictly positive on \((0,t)\).  Since
\(g<0\) near \(0\), equation \eqref{eq:qpp-weighted-zero} implies that
\(g\) is positive at some point of \((0,t)\).  Therefore \(g\) has a
unique zero \(c\in(0,t)\), and
\begin{equation}\label{eq:one-crossing}
 q''(s)<0\quad\text{for }0<s<c,
 \qquad
 q''(s)>0\quad\text{for }c<s<t.
\end{equation}

Define finite positive measures on \((0,t)\) by
\begin{align}
 \dd\mu_-(s)
 &=(t-s)(-q''(s))\1_{(0,c)}(s)\dd s,
 \label{eq:mu-minus}\\
 \dd\mu_+(s)
 &=(t-s)q''(s)\1_{(c,t)}(s)\dd s.
 \label{eq:mu-plus}
\end{align}
Both measures are nonzero by \eqref{eq:one-crossing}.  Equation
\eqref{eq:qpp-weighted-zero} is equivalent to
\begin{equation}\label{eq:first-moments-equal}
 \int_0^t s\dd\mu_-(s)
 =\int_0^t s\dd\mu_+(s).
\end{equation}
Since \(s<c\) on the support of \(\mu_-\) and \(s>c\) on the support of
\(\mu_+\), with strict inequalities on sets of positive measure,
\begin{equation}\label{eq:mass-comparison}
 c\,\mu_+((0,t))
 <\int_0^t s\dd\mu_+(s)
 =\int_0^t s\dd\mu_-(s)
 <c\,\mu_-((0,t)).
\end{equation}
Hence
\begin{equation}\label{eq:mu-mass-strict}
 \mu_+((0,t))<\mu_-((0,t)).
\end{equation}

Using \(q(0)=0\) and \(q'(0)=\theta^2\), one more integration by parts
gives
\begin{align}
 q(t)-t\theta^2
 &=\int_0^t(t-s)q''(s)\dd s\notag\\
 &=\mu_+((0,t))-\mu_-((0,t))<0.
\end{align}

\end{proof}

\section{Accumulation at zero}\label{s:accumulation}
We now show that zero is an accumulation point of the support of $\mu_\beta$.
\begin{proposition}\label{prop:zero-accumulation}
For every $\beta>1$,
\begin{equation}\label{eq:zero-accumulation}
 0\in\overline{\supp\mu_\beta\setminus\{0\}}.
\end{equation}
\end{proposition}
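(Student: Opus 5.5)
Suppose, for contradiction, that zero is isolated in $\supp\mu_\beta$. By \Cref{lem:delta0-excluded} the support contains a positive point, so the smallest positive support point $q_\star\in(0,1]$ exists, and $\mu_\beta$ has the form \eqref{eq:first-gap-measure} with $m=\mu_\beta(\{0\})$. We have $m>0$ because $0\in\supp\mu_\beta$ and $0$ is isolated; and $m<1$ because $\mu_\beta\ne\delta_0$. On $[0,q_\star)$ one has $\alpha\equiv m$. By \Cref{lem:constant-mass-cole-hopf}, $u(s,\cdot)=T_{m,\beta^2(q_\star-s)}\psi$ with $\psi=u(q_\star,\cdot)$. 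The plan is to match the optimal diffusion and $\Gamma$ on $[0,q_\star]$ with the Brownian representation of \Cref{sec:stochastic}, taking $t=\beta^2q_\star$, and then apply \Cref{cor:first-gap-crossing}.

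First I would do the time change. Setting $\sigma=\beta^2 s$, the function $U(\sigma,x)=u(\sigma/\beta^2,x)$ coincides with the $U$ of \eqref{eq:def-h-U} with $t=\beta^2q_\star$. The diffusion $X_{\sigma/\beta^2}$ solves $\dd X=mU_x\dd\sigma+\dd\widetilde W_\sigma$, so its law agrees with that of the coordinate process under $\mathbb Q$ by uniqueness in law for \eqref{eq:X-SDE}. Hence $\Gamma(s)=q(\beta^2 s)$ for $s\in[0,q_\star]$, and $\theta=U_{xx}(0,0)=u_{xx}(0,0)$.

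Next I would use the variational conditions. By \Cref{prop:JT}, $G(0)=G(q_\star)$, which gives
\[
\int_0^{q_\star}(\Gamma(r)-r)\dd r=0,\qquad\text{i.e.}\qquad \int_0^{q_\star}\Gamma=\tfrac{q_\star^2}{2}.
\]
By \Cref{lem:optimality-consequences}, $\Gamma(q_\star)=q_\star$. Together these give $2\int_0^{q_\star}\Gamma=q_\star\Gamma(q_\star)$. Changing variables to $\sigma=\beta^2 s$, this becomes $2\int_0^t q=t\,q(t)$, which is exactly \eqref{eq:q-equality}. \Cref{cor:first-gap-crossing} then yields $q(t)<t\theta^2$. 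Translating back,
\[
q_\star=\Gamma(q_\star)<\beta^2 q_\star\,u_{xx}(0,0)^2\le q_\star
\]
by \eqref{eq:left-stability}, which is a contradiction.

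The hypotheses of \Cref{prop:closed-invariant} and \Cref{prop:transversality} ($0<m<1$, $q_\star>0$, $\supp\rho\subseteq[q_\star,1]$) are exactly what has been set up, with $\rho=(1-m)^{-1}(\mu_\beta-m\delta_0)$. The main point needing care is the time change together with the identification $\Gamma(s)=q(\beta^2 s)$, including the matching of the Cole--Hopf parameter $\beta^2(q_\star-s)$ with $t-\sigma$. I would settle it by writing $\mathbb Q$ explicitly as the law of the optimal diffusion via Girsanov, since both have drift $mU_x$ in the rescaled time. The case $q_\star=1$, where $\psi=\log\cosh$, is covered by the same argument.
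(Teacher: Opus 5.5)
Your proposal is correct and follows the paper's proof essentially step for step. The paper also takes $q_\star$ as the smallest positive support point with $0<m=\mu(\{0\})<1$ and identifies $q(s)=\Gamma(s/\beta^2)$ via the time change and uniqueness in law. It then turns $G(0)=G(q_\star)$ plus self-consistency into \eqref{eq:q-equality}, and contradicts \Cref{cor:first-gap-crossing} using the stability bound \eqref{eq:left-stability}.
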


\begin{proof}
Assume that zero is isolated in \(\supp\mu\). By
\Cref{lem:delta0-excluded}, the set \(\supp\mu\setminus\{0\}\) is
nonempty.  Because zero is isolated and \(\supp\mu\) is closed, this
set is compact.  Hence the number
\begin{equation}\label{eq:qstar-isolated}
 q_\star=\min(\supp\mu\setminus\{0\})
\end{equation}
is well-defined and strictly positive.

Choose \(\varepsilon\in(0,q_\star)\). Then
\(\supp\mu\cap[0,\varepsilon]=\{0\}\). If \(\mu(\{0\})=0\), then
\(\mu([0,\varepsilon])=0\), which would contradict
\(0\in\supp\mu\). Hence
\[
 m=\mu(\{0\})>0.
\]
Moreover, \(m<1\) by \Cref{lem:delta0-excluded}. Set
\begin{equation}\label{eq:m-t-isolated}
 t=\beta^2q_\star,\qquad \psi(x)=u(q_\star,x).
\end{equation}
Then
\[
 \alpha(r)=m,\qquad 0\le r<q_\star,
\]
and
\[
 \mu=m\delta_0+(1-m)\rho
\]
for a probability measure \(\rho\) supported on \([q_\star,1]\).

Define
\begin{equation}\label{eq:timechanged-U}
 U(s,x)=u(s/\beta^2,x),\qquad0\le s\le t.
\end{equation}
On \(0\le s<t\), the Parisi PDE becomes
\[
 U_s=-\frac12(U_{xx}+mU_x^2),\qquad U(t,x)=\psi(x).
\]
Hence \(U(s,\cdot)=T_{m,t-s}\psi\), so
\Cref{cor:first-gap-crossing} applies.

We next identify the first-gap diffusion with a time change of
\eqref{eq:optimal-diffusion}. Put
\[
 Y_s=X_{s/\beta^2},\qquad
 B_s=\beta W_{s/\beta^2}.
\]
Then \(B\) is standard Brownian motion, and the time-change formula for
\eqref{eq:optimal-diffusion} gives
\[
 \dd Y_s=mU_x(s,Y_s)\dd s+\dd B_s,\qquad Y_0=0.
\]
Because \(U_x\) is globally Lipschitz in space by
\eqref{eq:curvature-bounds}, this SDE has uniqueness in law.  Its solution therefore has the same law as the coordinate process
under \(\mathbb Q\) in \eqref{eq:X-SDE}. Consequently, if \(q\) is
defined by \eqref{eq:q-theta},
\begin{equation}\label{eq:q-Gamma-timechange}
 q(s)=\Gamma(s/\beta^2),\qquad0\le s\le t.
\end{equation}

Both \(0\) and \(q_\star\) belong to \(\supp\mu\), so
\Cref{prop:JT} gives \(G(0)=G(q_\star)\). By
\eqref{eq:Gamma-G-def},
\[
 0=G(0)-G(q_\star)
 =\frac{\beta^2}{2}\int_0^{q_\star}(\Gamma(r)-r)\dd r,
\]
and hence
\begin{equation}\label{eq:G-endpoint-equality}
 \int_0^{q_\star}\Gamma(r)\dd r=\frac{q_\star^2}{2}.
\end{equation}
By \eqref{eq:self-consistency}, self-consistency at \(q_\star\) gives
\begin{equation}\label{eq:q-terminal-isolated}
 q(t)=\Gamma(q_\star)=q_\star=\frac t{\beta^2}.
\end{equation}
Using \eqref{eq:q-Gamma-timechange} and changing variables in
\eqref{eq:G-endpoint-equality}, we obtain
\begin{align*}
 2\int_0^tq(s)\dd s
 &=2\beta^2\int_0^{q_\star}\Gamma(r)\dd r\\
 &=\beta^2q_\star^2=tq(t).
\end{align*}
Thus \eqref{eq:q-equality} holds, and
\Cref{cor:first-gap-crossing} yields
\begin{equation}\label{eq:strict-isolated}
 q(t)<t\theta^2,\qquad \theta=U_{xx}(0,0)=u_{xx}(0,0).
\end{equation}
By \eqref{eq:left-stability}, stability at zero gives
\(\beta^2\theta^2\le1\). Therefore
\[
 t\theta^2\le\frac t{\beta^2}=q(t),
\]
contradicting \eqref{eq:strict-isolated}. This proves that zero cannot be isolated.
\end{proof}

\section{Absence of an atom at zero}
\label{s:origin-analysis}

The preceding section shows that zero is an accumulation point of
\(\supp\mu\). We now extract two consequences of this accumulation.
The first is \Cref{lem:marginality}, which states that the self-consistency relations force
\[
    \Gamma'(0)=1,\qquad u_{xx}(0,0)=\frac1\beta.
\]
We then show in \Cref{lem:no-zero-atom} that the Parisi measure has no atom at zero.

\begin{lemma}\label{lem:marginality}
For $\beta>1$,
\begin{equation}\label{eq:marginality}
 \Gamma'(0)=1,\qquad u_{xx}(0,0)=\frac1\beta.
\end{equation}
\end{lemma}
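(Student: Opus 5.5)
We combine the accumulation at zero from \Cref{prop:zero-accumulation} with self-consistency \eqref{eq:self-consistency} at points of the support tending to zero. This yields $\Gamma'(0)=1$. The value of $u_{xx}(0,0)$ then follows from \eqref{eq:Gamma-prime} at $s=0$, since $X_0=0$ is deterministic.

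First, by \Cref{prop:zero-accumulation}, there is a sequence $q_n\in\supp\mu\setminus\{0\}$ with $q_n\downarrow0$. By \Cref{lem:optimality-consequences}, $\Gamma(q_n)=q_n$ for every $n$. Since $0\in\supp\mu$, the same lemma gives $\Gamma(0)=0$; this can also be read off directly, because $\Gamma(0)=u_x(0,0)^2=0$ by evenness of $u(0,\cdot)$ (\Cref{prop:PDE-regularity}). Hence
\[
 \frac{\Gamma(q_n)-\Gamma(0)}{q_n}=1\quad\text{for all }n.
\]

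Second, we need $\Gamma$ to be differentiable at $0$ with a continuous derivative. \Cref{lem:Gamma-derivatives} supplies this: $\Gamma'(s)=\beta^2\E C_s^2$, and the proof there shows that $s\mapsto\E C_s^2$ is continuous on $[0,1]$ by dominated convergence, using bounded continuous paths. So $\Gamma\in C^1([0,1])$ with a one-sided derivative at $0$. Letting $n\to\infty$ in the difference quotients gives $\Gamma'(0)=1$.

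Third, at $s=0$ we have $C_0=u_{xx}(0,0)$ deterministically, so
\[
 1=\Gamma'(0)=\beta^2u_{xx}(0,0)^2.
\]
By \eqref{eq:curvature-bounds}, $u_{xx}(0,0)\ge\underline c_\beta>0$. Taking the positive root gives $u_{xx}(0,0)=1/\beta$. This is consistent with the inequality \eqref{eq:left-stability}, which is now saturated.

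The only point that needs care is the one-sided differentiability of $\Gamma$ at the endpoint $0$. This is already handled by the integral identity $\Gamma(t)-\Gamma(r)=\beta^2\int_r^t\E C_s^2\dd s$ in \Cref{lem:Gamma-derivatives} together with the continuity of the integrand, so I expect no real obstacle.
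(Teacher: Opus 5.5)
Your proposal is correct and follows the paper's proof essentially step for step. You use self-consistency at support points $q_n\downarrow0$, $\Gamma(0)=0$ by evenness, right-continuity of $\Gamma'(s)=\beta^2\E C_s^2$ at zero to get $\Gamma'(0)=1$, and then the positive root of $\beta^2u_{xx}(0,0)^2=1$ via the curvature lower bound.
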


\begin{proof}
By \Cref{prop:zero-accumulation}, there are
\(q_n\in\supp\mu\setminus\{0\}\) with \(q_n\downarrow0\).
The following argument is the endpoint analogue of the
observation that accumulation of support points forces
\(\Gamma'=1\); compare
\cite[Theorem~5]{AuffingerChenProperties}.
Evenness gives \(u_x(0,0)=0\), and hence
\(\Gamma(0)=0\). Self-consistency at each \(q_n\) gives
\(\Gamma(q_n)=q_n\).

The process \(X_s\) converges to zero in probability as \(s\downarrow0\),
and \(u_{xx}\) is bounded and continuous. Thus
\[
 \E[u_{xx}(s,X_s)^2]\longrightarrow u_{xx}(0,0)^2.
\]
By \eqref{eq:Gamma-prime}, \(\Gamma'\) is right-continuous at zero.
Consequently,
\[
 \Gamma'(0)
 =\lim_{n\to\infty}\frac{\Gamma(q_n)-\Gamma(0)}{q_n}=1.
\]
Equation \eqref{eq:Gamma-prime} at \(s=0\) now gives
\[
 1=\beta^2u_{xx}(0,0)^2.
\]
The strict lower curvature bound in \eqref{eq:curvature-bounds} selects
the positive root, so \(u_{xx}(0,0)=\beta^{-1}\).
\end{proof}

\begin{lemma}\label{lem:no-zero-atom}
For $\beta>1$, $\mu(\{0\})=0$, and consequently
\begin{equation}\label{eq:alpha-to-zero}
 \alpha(s)\longrightarrow0\qquad(s\downarrow0).
\end{equation}
\end{lemma}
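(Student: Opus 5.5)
The plan is to argue by contradiction using the integrated second-derivative formula \eqref{eq:Gamma-prime-integral}, together with the marginality identities of \Cref{lem:marginality} and the accumulation of the support at zero from \Cref{prop:zero-accumulation}. Suppose that $m=\mu(\{0\})>0$. Since $\alpha$ is nondecreasing, $\alpha(s)\ge m$ for every $s\ge0$. Because $C_s=u_{xx}(s,X_s)>0$ by \eqref{eq:curvature-bounds}, the integrand in \eqref{eq:Gamma-prime-integral} then satisfies
\[
 \E\bigl[D_s^2-2\alpha(s)C_s^3\bigr]\le \E D_s^2-2m\,\E C_s^3 .
\]

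Next I will identify the limit of the right side as $s\downarrow0$. The process $X_s$ converges to $0$ in probability. The functions $u_{xx}$ and $u_{xxx}$ are bounded and jointly continuous by \Cref{prop:PDE-regularity}. Dominated convergence therefore gives $\E D_s^2\to u_{xxx}(0,0)^2$ and $\E C_s^3\to u_{xx}(0,0)^3$. By evenness of $u(0,\cdot)$ one has $u_{xxx}(0,0)=0$, and \Cref{lem:marginality} gives $u_{xx}(0,0)=\beta^{-1}$. Hence the right side tends to $-2m\beta^{-3}<0$. Choose $\delta>0$ such that the integrand is at most $-m\beta^{-3}$ on $(0,\delta)$. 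Then \eqref{eq:Gamma-prime-integral} with $r=0$, combined with $\Gamma'(0)=1$, yields
\[
 \Gamma'(t)\le 1-m\beta\,t<1,\qquad 0<t<\delta .
\]

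Now integrate from $0$, using $\Gamma(0)=0$ (by evenness, $u_x(0,0)=0$). This gives $\Gamma(s)<s$ for every $s\in(0,\delta)$. By \Cref{prop:zero-accumulation} there are support points $q_n\downarrow0$ with $q_n>0$. For $n$ large, $q_n\in(0,\delta)$, while self-consistency \eqref{eq:self-consistency} gives $\Gamma(q_n)=q_n$. This is a contradiction, so $\mu(\{0\})=0$. Finally, \eqref{eq:alpha-to-zero} follows from right-continuity of the distribution function: $\alpha(s)=\mu([0,s])\downarrow\mu(\{0\})=0$ as $s\downarrow0$.

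The only delicate step is the limit of the integrand at $s=0$. I expect it to be routine given the boundedness and joint continuity in \Cref{prop:PDE-regularity}: the argument is the same dominated-convergence reasoning already used for right-continuity of $\Gamma'$ in \Cref{lem:marginality}.
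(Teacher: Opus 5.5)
Your proposal is correct and follows essentially the same route as the paper: assume an atom at zero, use \eqref{eq:Gamma-prime-integral} with $\Gamma'(0)=1$ and $u_{xx}(0,0)=\beta^{-1}$ to get $\Gamma'(s)<1$, hence $\Gamma(s)<s$, on a small interval $(0,\delta)$, and contradict $\Gamma(q_n)=q_n$ along support points $q_n\downarrow0$. The only cosmetic difference is that you bound the integrand using $\alpha(s)\ge m$ and $C_s>0$, whereas the paper passes to the limit $\alpha(s)\to\mu(\{0\})$ inside the expectation.
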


\begin{proof}
Let \(m_0=\mu(\{0\})\). Continuity from above of the finite measure
\(\mu\) gives
\[
 \alpha(s)=\mu([0,s])\longrightarrow m_0
 \qquad(s\downarrow0).
\]
The drift in \eqref{eq:optimal-diffusion} is bounded by \(\beta^2\), so
\[
 X_s=\beta W_s+\beta^2\int_0^s\alpha(r)u_x(r,X_r)\dd r
 \longrightarrow0
\]
in every \(L^p\). By boundedness and continuity of the spatial
derivatives,
\[
 C_s\longrightarrow C_0=\frac1\beta,\qquad
 D_s\longrightarrow D_0=0
\]
in every \(L^p\), where \Cref{lem:marginality} and evenness were used.
It follows that
\begin{equation}\label{eq:no-atom-integrand-limit}
 \E[D_s^2-2\alpha(s)C_s^3]
 \longrightarrow-\frac{2m_0}{\beta^3}.
\end{equation}

Suppose that \(m_0>0\). By
\eqref{eq:no-atom-integrand-limit}, there are \(c,\delta>0\) such that
\[
 \E[D_s^2-2\alpha(s)C_s^3]\le-c
 \qquad(0<s\le\delta).
\]
Using \eqref{eq:Gamma-prime-integral} with \(r=0\) and
\(\Gamma'(0)=1\), we obtain
\[
 \Gamma'(s)\le1-\beta^4cs<1,\qquad 0<s\le\delta.
\]
Since \(\Gamma(0)=0\),
\[
 \Gamma(s)-s=\int_0^s(\Gamma'(r)-1)\dd r<0
 \qquad(0<s\le\delta).
\]
This contradicts \(\Gamma(q_n)=q_n\) for the support points
\(q_n\downarrow0\). Therefore \(m_0=0\), and the displayed limit for
\(\alpha\) becomes \eqref{eq:alpha-to-zero}.
\end{proof}

\section{Differential inequalities for the  transformed density}
\label{s:dual-density}

We now introduce a transformed density associated with the optimal
diffusion. This density separates the ordinary diffusion from the nonlinear drift created by the Parisi equation. What remains evolves by the heat equation and retains the convexity properties that, on a gap starting at zero, come directly from the Gaussian density.

\Cref{prop:dual-density-cone} derives an exact
Brownian--bridge representation for the transformed density, shows that it evolves
by the ordinary heat equation whenever the distribution function of the
Parisi measure is constant, and identifies its update at atoms. The same
proposition proves that the transformed density is even and log-concave
and that, for its negative logarithm \(V\),
\[
    V_x\ge0,\qquad V_{xx}\ge0,\qquad V_{xxx}\le0
\]
on the positive half-line. These inequalities provide the monotonicity
inputs used later to prove the crossing property on an arbitrary support
gap.

For $t>0$, write
\[
 g_t(x)=\frac{1}{\sqrt{2\pi t}}\exp\left(-\frac{x^2}{2t}\right).
\]
For $s>0$, let
$\mathfrak b^{\,s}=(\mathfrak b_t^{\,s})_{0\le t\le s}$ denote a
standard Brownian bridge from zero to zero on $[0,s]$, so that
\[
 \operatorname{Cov}(\mathfrak b_t^{\,s},\mathfrak b_r^{\,s})
 =t\wedge r-\frac{tr}{s}.
\]

\begin{proposition}\label{prop:dual-density-cone}
Let $\nu$ be a Borel probability measure on $[0,1]$, and write
\[
 \alpha_\nu(s)=\nu([0,s]),\qquad u=u_\nu.
\]
Let $X^\nu$ be the optimal diffusion
\[
 \dd X_s^\nu
 =\beta^2\alpha_\nu(s)u_x(s,X_s^\nu)\dd s+\beta\dd W_s,
 \qquad X_0^\nu=0.
\]
For every $s>0$, the law of $X_s^\nu$ has a strictly positive
$C^\infty$ density $p_\nu(s,\cdot)$. Define
\begin{equation}\label{eq:dual-density-def}
 r_\nu(s,x)
 =\exp\{-\alpha_\nu(s)u(s,x)\}p_\nu(s,x),
 \qquad
 V_\nu(s,x)=-\log r_\nu(s,x).
\end{equation}
Then, for every $s>0$,
\begin{equation}\label{eq:dual-bridge}
 r_\nu(s,x)
 =g_{\beta^2s}(x)
 \E_{\mathrm{br}}\exp\left\{
 -\int_{[0,s]}
 u\left(t,\frac tsx+\beta\mathfrak b_t^{\,s}\right)
 \nu(\dd t)
 \right\}.
\end{equation}
If $\alpha_\nu\equiv m$ on an open interval $(a,b)$, then
\begin{equation}\label{eq:dual-heat-equation}
 (r_\nu)_s=\frac{\beta^2}{2}(r_\nu)_{xx}
 \qquad\text{on }(a,b)\times\R.
\end{equation}
If $q\in(0,1]$ is an atom of $\nu$ of size $c=\nu(\{q\})$, then
\begin{equation}\label{eq:dual-jump}
 r_\nu(q,x)=\e^{-cu(q,x)}r_\nu(q-,x),
\end{equation}
where
\[
 r_\nu(q-,x)
 =\e^{-\alpha_\nu(q-)u(q,x)}p_\nu(q,x).
\]
Finally, $r_\nu(s,\cdot)$ is even and log-concave, and
\begin{equation}\label{eq:dual-cone}
 (V_\nu)_x(s,x)\ge0,
 \qquad
 (V_\nu)_{xx}(s,x)\ge0,
 \qquad
 (V_\nu)_{xxx}(s,x)\le0,
 \qquad x>0.
\end{equation}
By parity,
\[
 (V_\nu)_x(s,0)=(V_\nu)_{xxx}(s,0)=0.
\]
\end{proposition}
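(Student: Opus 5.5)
The plan is to derive everything from a Girsanov computation along the optimal diffusion. I will treat finitely supported \(\nu\) first, where every step is explicit by \Cref{lem:constant-mass-cole-hopf}. Then I will pass to general \(\nu\) by approximation, using \eqref{eq:uniform-derivative-convergence}.

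\emph{Bridge formula.} Under \(\P\), \(\beta W\) has the Gaussian law with densities \(g_{\beta^2 s}\). The drift of \(X^\nu\) is \(b=\beta^2\alpha_\nu u_x\), so the Girsanov density on \([0,s]\) is
\[
\exp\Bigl(\int_0^s\alpha_\nu u_x\,\dd X-\frac{\beta^2}{2}\int_0^s\alpha_\nu^2u_x^2\,\dd t\Bigr).
\]
By It\^o's formula and the Parisi PDE, \(\dd u(t,X_t)=u_x\,\dd X_t-\frac{\beta^2}{2}\alpha_\nu u_x^2\,\dd t\). Hence the exponent equals \(\int_0^s\alpha_\nu\,\dd u(t,X_t)\). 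Integrating by parts in the Stieltjes sense (\(\alpha_\nu\) is increasing and right-continuous, \(u\) is continuous) gives
\[
\int_0^s\alpha_\nu\,\dd u=\alpha_\nu(s)u(s,X_s)-\int_{[0,s]}u(t,X_t)\,\nu(\dd t),
\]
where the atom at \(0\) is included through \(\alpha_\nu(0)u(0,0)\). Conditioning the Brownian path on \(\beta W_s=x\) turns it into \(\frac tsx+\beta\mathfrak b^{\,s}_t\). This gives \(p_\nu=\e^{\alpha_\nu u}\times\)\eqref{eq:dual-bridge}, i.e.\ \eqref{eq:dual-bridge} for \(r_\nu\). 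Positivity and smoothness of \(p_\nu\) follow from the formula, since all \(x\)-derivatives of \(u\) are bounded and the Gaussian factor dominates. The jump relation \eqref{eq:dual-jump} is immediate: an atom at \(q\) changes \(\alpha_\nu(q)\) by \(c\), while \(p_\nu(q,\cdot)\) has no jump. For \eqref{eq:dual-heat-equation}: when \(\nu\) charges no mass in \((a,b)\), the Markov property splits the bridge functional at time \(a\). So \(r_\nu(s,\cdot)=P_{\beta^2(s-a)}r_\nu(a,\cdot)\) for \(s\in(a,b)\), which solves the heat equation. Alternatively one checks the Fokker--Planck equation for \(p_\nu\) against the Cole--Hopf PDE directly.

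\emph{Finitely supported \(\nu\).} Now \(r_\nu\) is built from \(\delta_0\) by alternating heat flows and multiplications by \(\e^{-c\,u(q,\cdot)}\) at the atoms. At \(s=0\) the transformed density degenerates to \(\delta_0\), so the first nontrivial object is a Gaussian \(g_{\beta^2 s}\), possibly multiplied by \(\e^{-c u}\) at later atoms. For a Gaussian, \(V=x^2/(2\beta^2s)+\text{const}\), so the cone \eqref{eq:dual-cone} holds with \(V_{xxx}=0\).

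At an atom, \(V\) changes by \(c\,u(q,\cdot)\). Here \(u\) is even and convex, so \(u_x\ge0\) and \(u_{xx}\ge0\) for \(x>0\). We also need \(u_{xxx}\le0\) for \(x>0\). In the slope coordinates of \Cref{prop:invariant}, \(u_{xxx}=CC_B=-2C(a+K)B\le0\) for \(B\ge0\), and this holds for every finite cascade. So each atom preserves the cone.

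Between atoms, \(V\) solves \(V_s=\frac{\beta^2}{2}(V_{xx}-V_x^2)\). Differentiating, \(Z=V_{xx}\) and \(Y=V_{xxx}\) satisfy
\[
Z_s=\tfrac{\beta^2}{2}\bigl(Z_{xx}-2V_xZ_x-2Z^2\bigr),\qquad
Y_s=\tfrac{\beta^2}{2}\bigl(Y_{xx}-2V_xY_x-6ZY\bigr).
\]
The \(Y\)-equation is linear and homogeneous, and \(Y(s,0)=0\) by parity. A maximum principle on \((0,\infty)\), in the spirit of \Cref{lem:comparison}, then propagates \(Y\le0\). Preservation of \(Z\ge0\) is just preservation of log-concavity under the heat semigroup (Pr\'ekopa). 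Finally, \(V_x\ge0\) for \(x>0\) follows from evenness and convexity.

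\emph{Main obstacle.} The hardest step is the half-line maximum principle for \(Y\). The domain is unbounded, and we need growth control at \(x\to\infty\) to exclude a negative infimum escaping to infinity. I would first establish \(V(s,x)=x^2/(2\beta^2 s)+O(|x|)\) with derivative bounds from the bridge formula, using the bounded derivatives of \(u\). Then I would run the comparison argument with an exponential-in-time weight against a barrier such as \(\varepsilon\e^{\lambda s}(1+x^2)\) and let \(\varepsilon\downarrow0\).

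\emph{General \(\nu\).} Approximate \(\nu\) weakly by finitely supported \(\nu_n\). By \eqref{eq:uniform-derivative-convergence} and dominated convergence in \eqref{eq:dual-bridge}, \(r_{\nu_n}(s,\cdot)\to r_\nu(s,\cdot)\) together with its \(x\)-derivatives, locally uniformly, for every \(s\) at which \(\alpha_{\nu_n}(s)\to\alpha_\nu(s)\). For the remaining countably many \(s\), use right-continuity in \(s\). Since \(r_\nu>0\), the inequalities \eqref{eq:dual-cone} pass to the limit, and parity gives \((V_\nu)_x(s,0)=(V_\nu)_{xxx}(s,0)=0\).
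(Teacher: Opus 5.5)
Your proposal is correct and follows the paper's proof essentially step for step. The shared steps are: the Girsanov density rewritten as $\int_0^s\alpha_\nu\,\dd u(t,X_t)$ and integrated by parts to get the bridge formula; heat flow between atoms with multiplicative updates $\e^{-cu}$ at atoms; Pr\'ekopa plus the barrier $\e^{\lambda s}(1+x^2)$ maximum principle for $V_{xxx}$ in the finitely supported case, with $u_{xxx}\le0$ coming from the Cole--Hopf invariant; and weak approximation. The deviations are minor: you get the heat equation from the Markov property of the bridge functional rather than from Fokker--Planck, and you treat atoms of $\nu$ at the target time by right-continuity in $s$ rather than by the paper's atom-preserving approximation $\nu_n=\nu_n^-+c\delta_s+\nu_n^+$.
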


\begin{proof}
We divide the proof into six steps.

\smallskip
\noindent
\emph{Step 1: the global sign of $u_{xxx}$.}
We first show that
\begin{equation}\label{eq:global-third-sign}
 u_{\nu,xxx}(s,x)\le0,
 \qquad s>0,\quad x>0.
\end{equation}
Fix $s>0$ and put $m=\alpha_\nu(s)$.

Suppose first that $0<m<1$. Define
\[
 \widehat\nu_s=m\delta_0+\nu|_{(s,1]}.
\]
This is a probability measure of the form
\[
 \widehat\nu_s=m\delta_0+(1-m)\rho_s,
 \qquad \supp\rho_s\subseteq[s,1].
\]
For every $t>s$,
\[
 \widehat\nu_s([0,t])
 =m+\nu((s,t])
 =\nu([0,t]).
\]
The two distribution functions therefore agree on $(s,1]$, and the
value of the coefficient at the single time $s$ is immaterial to the
backward equation. Hence
\begin{equation}\label{eq:upper-measure-same-profile}
 u_{\widehat\nu_s}(s,\cdot)=u_\nu(s,\cdot).
\end{equation}
Apply \Cref{prop:closed-invariant} with $q_\star=s$, active parameter
$m$, and Cole--Hopf time $r=0$. In the slope coordinate
\[
 B=u_x(s,x),\qquad C=u_{xx}(s,x),
\]
that proposition gives $K\ge0$, while
\eqref{eq:K-J-identities} gives
\[
 C_B=-2(m+K)B.
\]
For $x>0$, evenness and strict convexity imply
\[
 0<B<1,\qquad C>0.
\]
Consequently,
\[
 u_{xxx}(s,x)=C_x=CC_B=-2C(m+K)B\le0.
\]

If $m=0$, then $\nu([0,s])=0$. For $\varepsilon>0$, set
\[
 \nu_\varepsilon
 =\varepsilon\delta_0+(1-\varepsilon)\nu.
\]
The preceding case gives
$u_{\nu_\varepsilon,xxx}(s,x)\le0$ for $x>0$. Since
$\nu_\varepsilon\Rightarrow\nu$, the uniform derivative convergence in
\Cref{prop:PDE-regularity} permits $\varepsilon\downarrow0$ and proves
\eqref{eq:global-third-sign}.

If $m=1$, then $\alpha_\nu(t)=1$ for all $t\ge s$, and the Parisi PDE
has the explicit solution
\[
 u(s,x)=\log\cosh x+\frac{\beta^2}{2}(1-s).
\]
Thus
\[
 u_{xxx}(s,x)=-2\tanh x\,\sech^2x<0,
 \qquad x>0.
\]
This proves \eqref{eq:global-third-sign} in all cases.

\smallskip
\noindent
\emph{Step 2: Girsanov's theorem and the bridge formula.}
Let $Y_t=\beta W_t$. By \Cref{prop:PDE-regularity}, $|u_x|\le1$,
$u_{xx}$ is bounded, and $u$ is absolutely continuous in time with a
bounded almost-everywhere time derivative. The It\^o--Krylov formula, applied as in
\Cref{lem:Gamma-derivatives}, gives
\begin{align}
 \dd u(t,Y_t)
 &=\left(u_t+\frac{\beta^2}{2}u_{xx}\right)(t,Y_t)\dd t
   +\beta u_x(t,Y_t)\dd W_t\notag\\
 &=-\frac{\beta^2}{2}\alpha_\nu(t)u_x(t,Y_t)^2\dd t
   +\beta u_x(t,Y_t)\dd W_t.
 \label{eq:dual-Ito-u}
\end{align}
Set $U_t=u(t,Y_t)$, regard $\alpha_\nu$ as a c\`adl\`ag function of
bounded variation, and set $\alpha_\nu(0-)=0$. Since $U$ is continuous,
integration by parts gives
\begin{equation}\label{eq:dual-product-ibp}
 \alpha_\nu(s)U_s
 =\int_{[0,s]}U_t\nu(\dd t)
  +\int_0^s\alpha_\nu(t-)\dd U_t.
\end{equation}
The functions $\alpha_\nu(t-)$ and $\alpha_\nu(t)$ differ only at
countably many deterministic times, and therefore are interchangeable
in both the Lebesgue and Brownian stochastic integrals below.
Substituting \eqref{eq:dual-Ito-u} into
\eqref{eq:dual-product-ibp} yields
\begin{align}
 \alpha_\nu(s)u(s,Y_s)
 ={}&\int_{[0,s]}u(t,Y_t)\nu(\dd t)
 -\frac{\beta^2}{2}\int_0^s
 \alpha_\nu(t)^2u_x(t,Y_t)^2\dd t\notag\\
 &+\beta\int_0^s\alpha_\nu(t)u_x(t,Y_t)\dd W_t.
 \label{eq:dual-product-identity}
\end{align}
Define
\begin{equation}\label{eq:dual-Girsanov-density}
 Z_s=\exp\left\{
 \beta\int_0^s\alpha_\nu(t)u_x(t,Y_t)\dd W_t
 -\frac{\beta^2}{2}\int_0^s
 \alpha_\nu(t)^2u_x(t,Y_t)^2\dd t
 \right\}.
\end{equation}
Since $|\alpha_\nu u_x|\le1$, Novikov's criterion applies, and $Z$ is
a martingale. Equation \eqref{eq:dual-product-identity} gives the exact
identity
\begin{equation}\label{eq:dual-Girsanov-potential}
 Z_s=\exp\left\{
 \alpha_\nu(s)u(s,Y_s)
 -\int_{[0,s]}u(t,Y_t)\nu(\dd t)
 \right\}.
\end{equation}
Under the probability measure with density $Z_s$ on $\mathcal F_s$,
\[
 \widetilde W_t
 =W_t-\beta\int_0^t\alpha_\nu(r)u_x(r,Y_r)\dd r
\]
is Brownian motion, and
\[
 \dd Y_t
 =\beta^2\alpha_\nu(t)u_x(t,Y_t)\dd t
  +\beta\dd\widetilde W_t.
\]
The drift is globally Lipschitz in space because
\[
 |\partial_x(\beta^2\alpha_\nu u_x)|\le\beta^2.
\]
Thus pathwise uniqueness holds, and the law of $Y$ under the changed
measure is the law of $X^\nu$.

Let $\varphi$ be bounded and measurable. Conditioning on $Y_s$ and
using \eqref{eq:dual-Girsanov-potential},
\begin{align*}
 \E\varphi(X_s^\nu)
 ={}&\E[\varphi(Y_s)Z_s]\\
 ={}&\int_\R\varphi(x)g_{\beta^2s}(x)
 \e^{\alpha_\nu(s)u(s,x)}\\
 &\quad\times\E_{\mathrm{br}}\exp\left\{
 -\int_{[0,s]}
 u\left(t,\frac tsx+\beta\mathfrak b_t^{\,s}\right)
 \nu(\dd t)
 \right\}\dd x.
\end{align*}
This proves the existence and strict positivity of the density, and
multiplication by $\e^{-\alpha_\nu(s)u(s,x)}$ gives
\eqref{eq:dual-bridge}.

The bridge expectation is finite. Indeed, with
\[
 M_0=\sup_{0\le t\le1}|u(t,0)|<\infty,
\]
the Lipschitz bound $|u_x|\le1$ gives
\[
 |u(t,y)|\le M_0+|y|.
\]
For $x$ in a compact set, the bridge integrand is therefore bounded by
a constant multiple of
\[
 \exp\{\beta\|\mathfrak b^{\,s}\|_\infty\}.
\]
A Brownian bridge has Gaussian tails for its supremum, and hence
exponential moments of every order.

\smallskip
\noindent
\emph{Step 3: spatial smoothness and derivative bounds.}
For fixed $s>0$, write
\[
 F_s(x)=\E_{\mathrm{br}}\e^{-I_s(x,\mathfrak b^{\,s})},
\]
where
\[
 I_s(x,\mathfrak b)
 =\int_{[0,s]}
 u\left(t,\frac tsx+\beta\mathfrak b_t\right)\nu(\dd t).
\]
Then
\begin{equation}\label{eq:dual-r-gF}
 r_\nu(s,x)=g_{\beta^2s}(x)F_s(x).
\end{equation}
For every integer $k\ge1$,
\begin{equation}\label{eq:dual-I-derivatives}
 \partial_x^kI_s(x,\mathfrak b)
 =\int_{[0,s]}\left(\frac ts\right)^k
 \partial_x^ku\left(t,\frac tsx+\beta\mathfrak b_t\right)
 \nu(\dd t).
\end{equation}
All positive-order spatial derivatives of $u$ are globally bounded.
The bridge domination from Step 2 therefore justifies differentiation
under the expectation to every order. In particular,
\begin{align}
 F_s'(x)
 &=\E_{\mathrm{br}}[-I_s'(x)\e^{-I_s(x)}],
 \label{eq:dual-F-first}\\
 F_s''(x)
 &=\E_{\mathrm{br}}[((I_s'(x))^2-I_s''(x))\e^{-I_s(x)}],
 \label{eq:dual-F-second}\\
 F_s'''(x)
 &=\E_{\mathrm{br}}[
 (-(I_s')^3+3I_s'I_s''-I_s''')\e^{-I_s}],
 \label{eq:dual-F-third}
\end{align}
where the arguments have been suppressed in the last line. If
\[
 M_k=\sup_{t,y}|\partial_x^ku(t,y)|,
\]
then \eqref{eq:dual-I-derivatives} gives
$|\partial_x^kI_s|\le M_k$. Consequently,
\begin{equation}\label{eq:dual-F-ratio-bounds}
 \left|\frac{F_s^{(j)}(x)}{F_s(x)}\right|
 \le C_j(M_1,\ldots,M_j),
 \qquad j=1,2,3,
\end{equation}
uniformly in $x$. The corresponding bounds are uniform when $s$ ranges
over a compact subinterval of $(0,1]$. Formula
\eqref{eq:dual-r-gF} proves that $r_\nu(s,\cdot)$, and hence
$p_\nu(s,\cdot)$, is strictly positive and $C^\infty$.

Moreover,
\[
 V_\nu(s,x)
 =\frac{x^2}{2\beta^2s}
  +\frac12\log(2\pi\beta^2s)-\log F_s(x),
\]
so \eqref{eq:dual-F-ratio-bounds} implies
\begin{equation}\label{eq:dual-H-bounded}
 \sup_{x\in\R}|(V_\nu)_{xxx}(s,x)|<\infty,
\end{equation}
locally uniformly for positive $s$. Since $u$ is even and the bridge
law is invariant under $\mathfrak b\mapsto-\mathfrak b$, both
$r_\nu(s,\cdot)$ and $V_\nu(s,\cdot)$ are even.

\smallskip
\noindent
\emph{Step 4: heat evolution and atomic updates.}
Suppose that $\alpha_\nu\equiv m$ on an open interval $(a,b)$. The
density $p=p_\nu$ satisfies the Fokker--Planck equation
\[
 p_s=\frac{\beta^2}{2}p_{xx}
 -\beta^2\partial_x(mu_xp)
\]
in the distributional sense on this interval. The drift is bounded and
has bounded spatial derivatives, while the diffusion coefficient is
uniformly elliptic; equivalently, one may differentiate the bridge
formula on compact subintervals. Thus $p$ is a classical solution in
the interior. Write $p=\e^{mu}r$. Then
\[
 p_s=\e^{mu}(r_s+mu_sr),
\]
\[
 p_{xx}=\e^{mu}
 \left(r_{xx}+2mu_xr_x+mu_{xx}r+m^2u_x^2r\right),
\]
and
\[
 \partial_x(mu_xp)
 =m\e^{mu}(u_{xx}r+u_xr_x+mu_x^2r).
\]
Using
\[
 u_s=-\frac{\beta^2}{2}(u_{xx}+mu_x^2),
\]
all lower-order terms cancel, leaving
\[
 r_s=\frac{\beta^2}{2}r_{xx}.
\]
This proves \eqref{eq:dual-heat-equation}.

If $q$ is an atom of size $c$, we couple the bridges for
$s\le q$ by writing
\[
 \mathfrak b_t^{\,s}
 =\sqrt{s}\,\mathfrak b_{t/s}^{\,1},
 \qquad 0\le t\le s,
\]
for a single unit-time bridge $\mathfrak b^{\,1}$. For $s<q$ and
$k=0,1,2,3$, set
\[
 I_{s,k}(x)
 =
 \int_{[0,s]}
 \left(\frac ts\right)^k
 \partial_x^ku\left(
  t,\frac tsx+\beta\sqrt{s}\,\mathfrak b_{t/s}^{\,1}
 \right)\nu(\dd t),
\]
where $\partial_x^0u=u$, and define
\[
 I_{q-,k}(x)
 =
 \int_{[0,q)}
 \left(\frac tq\right)^k
 \partial_x^ku\left(
  t,\frac tqx+\beta\sqrt{q}\,\mathfrak b_{t/q}^{\,1}
 \right)\nu(\dd t).
\]
Fix a compact set $K\subset\R$. For every fixed bridge path,
\begin{equation}\label{eq:dual-atomic-I-convergence}
 \sup_{x\in K}
 |I_{s,k}(x)-I_{q-,k}(x)|
 \longrightarrow0,
 \qquad s\uparrow q,\quad k=0,1,2,3.
\end{equation}
Indeed, after splitting the integrals at $q-\delta$, uniform continuity
of the fixed bridge path and of the spatial derivatives of $u$ gives
uniform convergence on $[0,q-\delta]$. The two remaining tails are
bounded by a constant, depending only on $K$ and the bridge path, times
$\nu((q-\delta,q))$, which tends to zero as $\delta\downarrow0$.

Moreover, uniformly for $s$ sufficiently close to $q$ and $x\in K$,
\[
 |I_{s,0}(x)|
 \le C_K+\beta\|\mathfrak b^{\,1}\|_\infty,
\]
while $I_{s,k}$ is uniformly bounded for $k=1,2,3$. Hence the formulas
\eqref{eq:dual-F-first}--\eqref{eq:dual-F-third} and dominated
convergence, using the exponential moments of
$\|\mathfrak b^{\,1}\|_\infty$, imply
\[
 r_\nu(s,\cdot)\longrightarrow
 g_{\beta^2q}(\cdot)
 \E_{\mathrm{br}}\e^{-I_{q-,0}(\cdot)}
 \qquad\text{in }C^3_{\mathrm{loc}}(\R).
\]
We denote this limit by $r_\nu(q-,\cdot)$.

At time $q$, the atom contributes
\[
 c\,u\left(q,x+\beta\sqrt q\,\mathfrak b_1^{\,1}\right)
 =cu(q,x),
\]
because $\mathfrak b_1^{\,1}=0$. The bridge formula at time $q$
therefore gives
\[
 r_\nu(q,x)=\e^{-cu(q,x)}r_\nu(q-,x),
\]
which is \eqref{eq:dual-jump}. Combining this identity with
\eqref{eq:dual-density-def} and
$\alpha_\nu(q)=\alpha_\nu(q-)+c$ gives
\[
 r_\nu(q-,x)
 =\e^{-\alpha_\nu(q-)u(q,x)}p_\nu(q,x),
\]
as asserted.

\smallskip
\noindent
\emph{Step 5: finitely supported measures.}
Assume temporarily that $\nu$ is finitely supported. The forward
evolution of $r_\nu$ consists of alternating heat evolutions
\[
 r\longmapsto P_{\beta^2h}r
\]
and, at an atom of size $c$, multiplicative updates
\[
 r\longmapsto\e^{-cu}r.
\]
At time zero, in the sense of finite measures,
\[
 r_\nu(0)=\e^{-\nu(\{0\})u(0,0)}\delta_0.
\]
Thus, on the first open interval of positive length, the heat evolution
of this measure is a positive multiple of a Gaussian. At every later
atom we use the post-jump value in \eqref{eq:dual-jump} as the initial
value for the next heat interval. This fixes the time convention in the
induction below.

A Gaussian is log-concave. The heat semigroup preserves log-concavity:
if $f$ is log-concave, then
\[
 (x,y)\longmapsto f(y)g_{\beta^2h}(x-y)
\]
is jointly log-concave, and its $y$-marginal is log-concave by
Pr\'ekopa's theorem. At an atom,
\[
 -\log(\e^{-cu}r)=-\log r+cu,
\]
which is convex because $u$ is convex. Hence $r_\nu(s,\cdot)$ is even
and log-concave for every $s>0$. It follows that
\begin{equation}\label{eq:dual-first-two-cone-finite}
 (V_\nu)_{xx}\ge0,
 \qquad
 (V_\nu)_x\ge0\quad\text{on }(0,\infty).
\end{equation}

It remains to propagate the third derivative sign. On a heat interval,
put $V=V_\nu$ and $H=V_{xxx}$. With
$\kappa=\beta^2/2$, the heat equation for $r=\e^{-V}$ gives
\[
 V_s=\kappa(V_{xx}-V_x^2).
\]
Differentiating three times yields
\begin{equation}\label{eq:dual-H-PDE}
 H_s=\kappa H_{xx}-2\kappa V_xH_x-6\kappa V_{xx}H.
\end{equation}
Suppose $H(a,x)\le0$ for $x>0$ at the left endpoint of such an
interval. Before the first positive atom, $r$ is Gaussian and $H=0$, so
we need only consider $a>0$. 
Fix \(T\) strictly inside the heat interval. On this interval,
\[
 r(s,\cdot)=P_{\beta^2(s-a)}r(a,\cdot),
 \qquad a\le s\le T,
\]
where \(r(a,\cdot)\) denotes the strictly positive, smooth post-jump
value. By the bounds established in Step~3, \(r(a,\cdot)\) and its
spatial derivatives through order three are bounded. Hence the
approximation-of-the-identity property of the heat semigroup gives
\[
 \partial_x^jr(s,\cdot)\longrightarrow
 \partial_x^jr(a,\cdot)
 \qquad\text{locally uniformly as }s\downarrow a,
 \qquad 0\le j\le3.
\]
Since \(r(a,\cdot)>0\), it is bounded away from zero on every compact
spatial interval. The chain rule applied to \(V=-\log r\) therefore
shows that \(V_x,V_{xx},V_{xxx}\) converge locally uniformly to their
post-jump values as \(s\downarrow a\). Interior parabolic regularity on
the ensuing heat interval consequently gives
\[
 H\in C([a,T]\times[0,\infty))
 \cap C^{1,2}((a,T]\times(0,\infty)),
\]
where \(H(a,\cdot)\) is the post-jump value. The functions
\(V_x\) and \(V_{xx}\) are continuous on the same strip, and
\eqref{eq:dual-H-bounded}, together with \(a>0\), gives
\[
 \sup_{(s,x)\in[a,T]\times\mathbb R}|H(s,x)|<\infty.
\]

Define
\[
 \mathcal L
 =\partial_s-\kappa\partial_{xx}
  +2\kappa V_x\partial_x+6\kappa V_{xx}.
\]
Then $\mathcal LH=0$. Choose $\lambda>2\kappa$ and set
\[
 \Psi(s,x)=\e^{\lambda(s-a)}(1+x^2).
\]
By \eqref{eq:dual-first-two-cone-finite}, for $x\ge0$,
\begin{align*}
 \mathcal L\Psi
 =\e^{\lambda(s-a)}\bigl[
 &\lambda(1+x^2)-2\kappa+4\kappa xV_x\\
 &+6\kappa V_{xx}(1+x^2)
 \bigr]>0.
\end{align*}
For $\varepsilon>0$, set $G=H-\varepsilon\Psi$. At $s=a$ one has
$G<0$, while evenness gives $H(s,0)=0$ and hence $G(s,0)<0$.
The boundedness of $H$ implies that
\[
 G(s,x)\longrightarrow-\infty
 \qquad(x\to\infty)
\]
uniformly for $a\le s\le T$.

If $G$ were positive somewhere, we could choose $R$ so large that
$G<0$ on $[a,T]\times\{R\}$ and apply the classical maximum principle
on the cylinder $[a,T]\times[0,R]$. A positive maximum would occur at
some $(s_0,x_0)\in(a,T]\times(0,R)$. At that point,
\[
 G_x=0,\qquad G_{xx}\le0,\qquad G_s\ge0,
\]
where at $s_0=T$ the last inequality uses the left time derivative.
Since $V_{xx}\ge0$ and $G(s_0,x_0)>0$, it follows that
\[
 \mathcal LG(s_0,x_0)\ge0.
\]
On the other hand,
\[
 \mathcal LG=-\varepsilon\mathcal L\Psi<0,
\]
a contradiction. Letting $\varepsilon\downarrow0$ proves \(H\le0\) on
\([a,T]\times[0,\infty)\). Since \(T\) was arbitrary, the inequality
holds throughout the heat interval and, by continuity, at its right
limit before the next atomic update.

At an atom of size $c$,
\[
 V(q,x)=V(q-,x)+cu(q,x),
\]
and therefore
\[
 V_{xxx}(q,x)=V_{xxx}(q-,x)+cu_{xxx}(q,x)\le0
\]
by \eqref{eq:global-third-sign}. Induction through the finite cascade
proves the full cone \eqref{eq:dual-cone} for every finitely supported
measure.

\smallskip
\noindent
\emph{Step 6: passage to an arbitrary measure.}
Fix $s>0$. We approximate $\nu$ by finitely supported measures while
preserving its atom at the target time. Write
\[
 a=\nu([0,s)),\qquad c=\nu(\{s\}),\qquad d=\nu((s,1]).
\]
Choose finitely supported finite measures
\[
 \nu_n^-\Rightarrow\nu|_{[0,s)},
 \qquad
 \nu_n^+\Rightarrow\nu|_{(s,1]},
\]
with
\[
 \nu_n^-([0,s))=a,
 \qquad
 \nu_n^+((s,1])=d,
\]
and supports contained respectively in $[0,s)$ and $(s,1]$. Set
\[
 \nu_n=\nu_n^-+c\delta_s+\nu_n^+.
\]
Then
\begin{equation}\label{eq:dual-target-approximation}
 \nu_n\Rightarrow\nu,
 \qquad
 \nu_n|_{[0,s]}\Rightarrow\nu|_{[0,s]},
 \qquad
 \nu_n(\{s\})=c.
\end{equation}
Let $u_n=u_{\nu_n}$, $r_n=r_{\nu_n}$, and $V_n=-\log r_n$. By
\Cref{prop:PDE-regularity}, for every $k\ge0$,
\[
 \partial_x^ku_n\longrightarrow\partial_x^ku
\]
uniformly on $[0,1]\times\R$.

For a bridge path $\mathfrak b$ and $k=0,1,2,3$, define
\[
 I_{n,k}(x,\mathfrak b)
 =\int_{[0,s]}\left(\frac ts\right)^k
 \partial_x^ku_n\left(t,\frac tsx+\beta\mathfrak b_t\right)
 \nu_n(\dd t),
\]
with the natural convention for $k=0$, and define $I_k$ analogously
using $\nu$ and $u$. Let $K\subset\R$ be compact. For every fixed
continuous bridge path,
\begin{equation}\label{eq:dual-I-uniform-convergence}
 \sup_{x\in K}|I_{n,k}(x,\mathfrak b)-I_k(x,\mathfrak b)|
 \longrightarrow0,
 \qquad k=0,1,2,3.
\end{equation}
Indeed, the uniform convergence of the derivatives handles the change
from $u_n$ to $u$. For the remaining measure-convergence term, fix the bridge path.
Its range is compact, and the joint continuity of the spatial
derivatives of $u$ implies that the map from $x\in K$ to the resulting
function of $t$ is continuous in the uniform norm. Hence the family of
functions of $t$ obtained by letting $x$ range over $K$ is a compact
subset of $C([0,s])$. Weak convergence in
\eqref{eq:dual-target-approximation} is uniform on compact subsets of
$C([0,s])$, as follows by taking a finite uniform net.

The bounds from Step 3 hold uniformly in $n$. Indeed, the uniform
convergence in \Cref{prop:PDE-regularity} makes
$\sup_n\sup_t|u_n(t,0)|$ and
$\sup_n\sup_{t,x}|\partial_x^ku_n(t,x)|$, for $k=1,2,3$, finite.
Consequently, for $x\in K$,
\[
 \e^{-I_{n,0}(x,\mathfrak b)}
 \le C_K\e^{\beta\|\mathfrak b\|_\infty},
\]
while $I_{n,k}$, $k=1,2,3$, are uniformly bounded.

The Gaussian factor in \eqref{eq:dual-r-gF} is independent of \(n\).
For every fixed bridge path, \eqref{eq:dual-I-uniform-convergence} and
the uniform bounds above imply that the integrands in
\eqref{eq:dual-F-first}--\eqref{eq:dual-F-third} converge uniformly for
\(x\in K\). Moreover, the suprema over \(x\in K\) of their absolute
values are bounded by
\[
 C_K\e^{\beta\|\mathfrak b\|_\infty},
\]
for a constant \(C_K\) independent of \(n\). Since the Brownian-bridge
supremum has exponential moments of every order, dominated convergence,
applied to these suprema, gives
\[
 r_n(s,\cdot)\longrightarrow r_\nu(s,\cdot)
 \qquad\text{in }C^3(K).
\]
As \(K\) was arbitrary, the convergence holds in
\(C^3_{\mathrm{loc}}(\mathbb R)\).

Since \(r_\nu(s,\cdot)\) is continuous and strictly positive, it is
bounded below by a positive constant on \(K\). The preceding uniform
convergence therefore implies that \(r_n(s,\cdot)\) is also bounded
away from zero on \(K\) for all sufficiently large \(n\). Applying the
chain rule to the map \(y\mapsto-\log y\) now gives
\[
 V_n(s,\cdot)\longrightarrow V_\nu(s,\cdot)
 \qquad\text{in }C^3(K),
\]
and hence in \(C^3_{\mathrm{loc}}(\mathbb R)\).
The finite-support inequalities pass to the limit, giving
\[
 (V_\nu)_{xx}(s,x)\ge0,
 \qquad
 (V_\nu)_{xxx}(s,x)\le0,
 \qquad x>0.
\]
Evenness gives $(V_\nu)_x(s,0)=0$, and therefore
\[
 (V_\nu)_x(s,x)
 =\int_0^x(V_\nu)_{xx}(s,y)\dd y\ge0.
\]
The time $s>0$ was arbitrary, completing the proof.
\end{proof}

\section{The transformed density on a constant-mass gap}\label{s:dual-tail}

We next record, in \Cref{prop:arbitrary-gap-dual-tail}, the uniform tail estimates needed to differentiate the
slope-coordinate representation on an arbitrary gap. The key point is that the left endpoint \(a\) of the support gap is
positive. After introducing the rescaled gap variable
\[
    t=\beta^2(s-a),
\]
the Parisi PDE time is
\[
    s=\sigma(t)=a+\frac{t}{\beta^2},
\]
and therefore remains bounded away from zero. In the Brownian--bridge
representation, the transformed density is consequently a Gaussian
density with variance bounded away from zero, multiplied by a factor
growing at most exponentially in \(|x|\). This yields uniform
two-sided Gaussian bounds. Differentiating the bridge expectation in
\(x\), and using the bounded spatial derivatives of \(u\), then gives
bounds for \(V_x\), \(V_{xx}\), and \(V_{xxx}\).

\begin{proposition}
\label{prop:arbitrary-gap-dual-tail}
Suppose that
\[
 0<a<b\le1,
 \qquad
 \alpha(s)=m\in(0,1)
 \quad(a<s<b).
\]
Set
\[
 T=\beta^2(b-a),
 \qquad
 \sigma(t)=a+\frac{t}{\beta^2},
 \qquad
 U(t,x)=u(\sigma(t),x),
 \qquad 0\le t<T.
\]
Let $\widehat X_t=X_{\sigma(t)}$, denote its density by $p(t,\cdot)$,
and define
\begin{equation}\label{eq:gap-dual-density}
 f(t,x)=p(t,x)\e^{-mU(t,x)},
 \qquad
 V(t,x)=-\log f(t,x).
\end{equation}
Then
\begin{equation}\label{eq:gap-forward-equations}
 U_t=-\frac12(U_{xx}+mU_x^2),
 \qquad
 f_t=\frac12f_{xx}
\end{equation}
on $(0,T)\times\R$, and
\begin{equation}\label{eq:gap-dual-cone}
 V_x\ge0,
 \qquad V_{xx}\ge0,
 \qquad V_{xxx}\le0
 \qquad\text{on }(0,T)\times(0,\infty).
\end{equation}

For $0\le B<1$, let $x=x(t,B)\ge0$ be the inverse slope coordinate
\[
 B=U_x(t,x),
\]
and set
\[
 C(t,B)=U_{xx}(t,x(t,B)).
\]
Define
\begin{align}
 z&=-\frac12C_B=(m+K)B,
 &J&=K+BK_B,                                      \label{eq:gap-z-K-J}\\
 \Phi&=2z^2-mC,
 &N&=V_x+KB,                                      \label{eq:gap-Phi-N}\\
 R_1&=KBV_x+\frac12(KB)^2,
 &R_V&=\frac12(V_x^2-V_{xx}),                     \label{eq:gap-R1-RV}\\
 R_0&=2CJ-\frac m2C-\frac12z^2,
 &G&=2Cz(3zJ-CJ_B),                               \label{eq:gap-R0-G}\\
 L&=CJ-z(N+z),                                    \label{eq:gap-L}\\
 w(t,B)&=2p(t,x(t,B))C(t,B).                      \label{eq:gap-weight}
\end{align}
Every derivative of $V$ appearing in slope coordinates is evaluated at
$(t,x(t,B))$.

Fix a compact interval $I\subset[0,T)$, so that $\max I<T$. There are constants
$c_I,C_I>0$ such that, for every $t\in I$ and $x\in\R$,
\begin{align}
 c_I\e^{-C_Ix^2}
 &\le f(t,x)\le C_I\e^{-c_Ix^2},                 \label{eq:f-two-sided-gaussian}\\
 p(t,x)&\le C_I\e^{-c_Ix^2},                     \label{eq:p-gaussian}\\
 |V_x(t,x)|&\le C_I(1+|x|),                      \label{eq:Vx-growth}\\
 0\le V_{xx}(t,x)&\le C_I,                       \label{eq:Vxx-bound}\\
 |V_{xxx}(t,x)|&\le C_I.                         \label{eq:Vxxx-bound}
\end{align}
Consequently, for every $A\ge0$ and integer $k\ge0$, there are
constants $c_{I,A,k},C_{I,A,k}>0$ such that
\begin{equation}\label{eq:weighted-p-tail}
 \sup_{t\in I}\int_R^\infty
 p(t,x)(1+x)^k\e^{Ax}\dd x
 \le C_{I,A,k}\e^{-c_{I,A,k}R^2},
 \qquad R\ge0.
\end{equation}

There are constants $A_I,C_I'<\infty$ such that, for $t\in I$ and
$0\le B<1$,
\begin{align}
 &|N|+|\Phi|+|G|+|R_0|+|(R_0)_B|
 +|R_1|+|(R_1)_B|\notag\\
 &\qquad
 +|R_V|+|(R_V)_B|+|L|+|L_B|
 \le C_I'\e^{A_Ix(t,B)}.                         \label{eq:all-gap-growth}
\end{align}
If $I\Subset(0,T)$, then, with the time derivative taken at fixed
$B$,
\begin{equation}\label{eq:time-integrand-growth}
 \left|\frac{\partial_t(w\Phi)}w\right|
 \le C_I'\e^{A_Ix(t,B)}.
\end{equation}

More generally, suppose $Q$ is continuous on $I\times[0,1)$ and
satisfies
\begin{equation}\label{eq:admissible-Q}
 |Q(t,B)|\le C\e^{Ax(t,B)}
\end{equation}
for some $A,C<\infty$. Then
\begin{equation}\label{eq:wQ-integrability}
 \sup_{t\in I}\int_0^1w(t,B)|Q(t,B)|\dd B<\infty.
\end{equation}
If
\[
 B_I(R)=\sup_{t\in I}U_x(t,R),
\]
then
\[
 B_I(R)<1,
 \qquad
 B_I(R)\uparrow1
 \quad(R\to\infty),
\]
and
\begin{equation}\label{eq:wQ-tail}
 \sup_{t\in I}\int_{B_I(R)}^1w(t,B)|Q(t,B)|\dd B
 \le C_{I,Q}\e^{-c_{I,Q}R^2}.
\end{equation}

Define
\begin{equation}\label{eq:gap-tail-F}
 F(t,B)=\int_B^1w(t,D)\Phi(t,D)\dd D.
\end{equation}
Then
\begin{equation}\label{eq:F-gaussian-tail}
 |F(t,B)|\le C_I\e^{-c_Ix(t,B)^2}.
\end{equation}
In particular, if $Q$ satisfies \eqref{eq:admissible-Q}, then
\begin{equation}\label{eq:FQ-vanishing}
 F(t,B)Q(t,B)\longrightarrow0
 \qquad(B\uparrow1),
\end{equation}
uniformly for $t\in I$, and
\begin{equation}\label{eq:wCz-vanishing}
 w(t,B)C(t,B)z(t,B)\longrightarrow0
 \qquad(B\uparrow1)
\end{equation}
uniformly for $t\in I$.

If $I\Subset(0,T)$, the function
\begin{equation}\label{eq:gap-H-def}
 H(t)=\int_0^1w(t,B)\Phi(t,B)\dd B
\end{equation}
belongs to $C^1(I)$, with
\begin{equation}\label{eq:gap-H-derivative}
 H'(t)=\int_0^1\partial_t(w\Phi)(t,B)\dd B.
\end{equation}
At any $t\in(0,T)$ for which $H(t)=0$, one has $F(t,0)=0$ and
\begin{equation}\label{eq:gap-R0-integration-by-parts}
 \int_0^1w\Phi R_0\dd B
 =\int_0^1F(R_0)_B\dd B
 =\int_0^1w\tau(R_0)_B\dd B,
 \qquad \tau=\frac Fw.
\end{equation}
At such a centered time, the function
\begin{equation}\label{eq:gap-Hcal-def}
 \mathcal H(B)=w(B)C(B)z(B)-F(B)
\end{equation}
extends to an absolutely continuous function on $[0,1]$, satisfies
\begin{equation}\label{eq:gap-Hcal-derivative}
 \mathcal H_B=wL
 \qquad\text{a.e.\ on }(0,1),
\end{equation}
and has
\begin{equation}\label{eq:gap-Hcal-endpoints}
 \mathcal H(0)=\mathcal H(1)=0.
\end{equation}
In particular,
\begin{equation}\label{eq:gap-wL-centered}
 \int_0^1w(B)L(B)\dd B=0.
\end{equation}
\end{proposition}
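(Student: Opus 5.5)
The plan is to reduce almost everything to results already proved: Proposition~\ref{prop:dual-density-cone}, Lemma~\ref{lem:constant-mass-cole-hopf}, Proposition~\ref{prop:closed-invariant} and Lemma~\ref{lem:endpoint-growth}. I then repeat the integrability and integration-by-parts bookkeeping from the proof of Proposition~\ref{prop:transversality}, with the explicit Gaussian factor \(\e^{-x^2/2s}\) replaced by two-sided Gaussian bounds on \(f\).

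\textbf{Step 1: the equations and the cone.} Since \(\alpha=m\) on \((a,b)\), the time change \(s=\sigma(t)\) turns the Parisi PDE into \(U_t=-\frac12(U_{xx}+mU_x^2)\). The function \(f(t,\cdot)\) is \(r_\mu(\sigma(t),\cdot)\) from \eqref{eq:dual-density-def}. Hence \eqref{eq:dual-heat-equation}, after dividing by \(\beta^2\) under the time change, gives \(f_t=\frac12 f_{xx}\). The cone \eqref{eq:gap-dual-cone} is exactly \eqref{eq:dual-cone} at \(s=\sigma(t)>0\).

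\textbf{Step 2: the Gaussian bounds.} For \eqref{eq:f-two-sided-gaussian} I would use the bridge formula \eqref{eq:dual-bridge}. The Gaussian factor \(g_{\beta^2\sigma(t)}\) has variance in \([\beta^2a,\beta^2\sigma(\max I)]\), which is bounded away from \(0\) and \(\infty\). The bridge factor lies between \(\e^{-M_0-|x|}\E\e^{-\beta\|\mathfrak b\|_\infty}\) and \(\e^{M_0+|x|}\E\e^{\beta\|\mathfrak b\|_\infty}\), because \(|u(t,y)|\le M_0+|y|\) and \(\nu\) is a probability measure. Absorbing the \(\e^{\pm|x|}\) factors into the Gaussians gives both sides. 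For \eqref{eq:p-gaussian}, I use \(p=f\e^{mU}\) together with \(U\le C+|x|\). The bounds \eqref{eq:Vx-growth}--\eqref{eq:Vxxx-bound} follow from \(V=x^2/(2\beta^2\sigma)+\tfrac12\log(2\pi\beta^2\sigma)-\log F_\sigma\) and the ratio bounds \eqref{eq:dual-F-ratio-bounds}, which are uniform for \(\sigma\) in a compact subset of \((0,1]\). The lower bound \(V_{xx}\ge0\) comes from the cone. Completing the square then gives \eqref{eq:weighted-p-tail}.

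\textbf{Step 3: growth bounds.} On \(I\), \(U(t,\cdot)=T_{m,\tau}\psi\) with \(\psi=u(\sigma(\max I),\cdot)\) and \(\tau\) bounded; equivalently one may apply Lemma~\ref{lem:endpoint-growth} to the measure \(m\delta_0+\mu|_{(b',1]}\) as in Step~1 of Proposition~\ref{prop:dual-density-cone}. This gives exponential-in-\(x\) bounds for \(C^{-1}\), for \(\partial_B^kC\), and for \(K,K_B,J,J_B,z\). The quantities \(V_x,V_{xx},V_{xxx}\) have polynomial bounds by Step~2, and \(\partial_B=C^{-1}\partial_x\) costs only a factor \(\e^{2|x|}\). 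Hence every listed expression is bounded by \(\e^{A_Ix}\), which is \eqref{eq:all-gap-growth}.

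For \eqref{eq:time-integrand-growth} I compute \((\log w)_t\) at fixed \(B\), as in \eqref{eq:logw-s}, using \(x_t=-KB\) and \(C_t=C^2J\) from \eqref{eq:x-C-s-fixed-B}. The term \(p_t/p\) is obtained from \(f_t=\frac12f_{xx}\) and the \(U\)-equation; it equals \(R_V\) plus terms in \(C,J,U_x\), all of which are exponentially bounded. Then \(\Phi_t=CJ\Phi+G\).

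\textbf{Step 4: integrals, tails and endpoint identities.} The change of variables \(\dd B=C\dd x\) converts \(\int w|Q|\dd B\) into \(2\int p\,C^2|Q|\dd x\). This is controlled by \eqref{eq:weighted-p-tail}, which yields \eqref{eq:wQ-integrability} and \eqref{eq:wQ-tail}. The facts \(B_I(R)<1\) and \(B_I(R)\uparrow1\) follow from the uniform endpoint limit \eqref{eq:transformed-slope-endpoints}, together with the continuity of \(U_x\) in \(t\) and \(U_x<1\). The bound \eqref{eq:F-gaussian-tail} is the same tail estimate started at \(R=x(t,B)\). This gives \eqref{eq:FQ-vanishing} directly. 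Since \(wCz=-pCU_{xxx}\) with bounded \(C\) and \(U_{xxx}\), \eqref{eq:p-gaussian} gives \eqref{eq:wCz-vanishing}.

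The differentiability of \(H\) is proved by truncating at \(1-\varepsilon\) and using the uniform tail bound from \eqref{eq:time-integrand-growth}, exactly as in Proposition~\ref{prop:transversality}. The identities \eqref{eq:gap-R0-integration-by-parts}--\eqref{eq:gap-wL-centered} are proved as \eqref{eq:tail-integration-by-parts} and \eqref{eq:wL-centered} were, with \(N=V_x+KB\) replacing \(x/s+KB\). The needed relation \(w_B/w=-(N+z)/C\) holds because \(p=\e^{-V+mU}\), so \((\log p)_x=-V_x+mU_x\). Then \((\log w)_B=C^{-1}(-V_x+mB)-2z/C\), and since \(mB-z=-KB\) this equals \(-(N+z)/C\). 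The remaining ingredients are \(\mathcal H(0)=0\), which uses \(F(0)=H=0\) and \(z(0)=0\), and \(\mathcal H(1)=0\), which follows from the vanishing limits above.

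\textbf{Main obstacle.} I expect the main obstacle to be \eqref{eq:time-integrand-growth}. It requires an honest computation of \(\partial_t\log p\) at fixed \(B\) in terms of \(V\), and it needs uniformity of the growth constants in \(t\). That uniformity is why \(I\) must avoid both \(t=0\) (the atom-jump side, where \(p\) may fail to be \(C^1\) in time) and \(T\).
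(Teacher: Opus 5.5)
Your proposal is correct and follows essentially the same route as the paper. Both arguments identify $f$ with the transformed density of Proposition~\ref{prop:dual-density-cone}, derive the Gaussian and $V$-derivative bounds from the bridge formula using $\sigma(t)\ge a>0$, import exponential growth of the slope quantities from Lemma~\ref{lem:endpoint-growth} through a Cole--Hopf representation started at a later time, and then repeat the change-of-variables, truncation, and integration-by-parts bookkeeping of Proposition~\ref{prop:transversality} with $N=V_x+KB$. The differences are cosmetic: you start the representation at $\sigma(\max I)$ rather than at $b$, and you get $B_I(R)\uparrow1$ from the uniform limit \eqref{eq:transformed-slope-endpoints} rather than from Dini's theorem.
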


\begin{proof}
We divide the proof into nine steps.

\smallskip
\noindent
\emph{Step 1: rescaling and identification of the transformed density.}
Set
\[
 \widehat W_t
 =\beta\bigl(W_{a+t/\beta^2}-W_a\bigr).
\]
Then $\widehat W$ is Brownian motion with respect to the shifted
filtration. Since $\alpha(s)=m$ on $(a,b)$, the time-changed optimal
diffusion satisfies
\[
 \dd\widehat X_t=mU_x(t,\widehat X_t)\dd t+\dd\widehat W_t,
 \qquad \widehat X_0=X_a.
\]
The Parisi PDE gives the first equation in
\eqref{eq:gap-forward-equations}. The density $p$ therefore satisfies
\[
 p_t=\frac12p_{xx}-\partial_x(mU_xp)
\]
first in the distributional sense and, by interior parabolic
regularity, classically on $(0,T)\times\R$. Writing
$p=\e^{mU}f$ and using
$U_t=-(U_{xx}+mU_x^2)/2$, a direct calculation cancels all lower-order
terms and gives $f_t=f_{xx}/2$. Since
$\alpha(\sigma(t))=m$ for $0\le t<T$, this is exactly the transformed density
from \Cref{prop:dual-density-cone} at the original time $\sigma(t)$.
Thus \eqref{eq:gap-dual-cone} follows from that proposition.

\smallskip
\noindent
\emph{Step 2: two-sided Gaussian estimates.}
Put
\[
 s_- =\min_{t\in I}\sigma(t)>0,
 \qquad
 s_+ =\max_{t\in I}\sigma(t)<b.
\]
The bridge formula \eqref{eq:dual-bridge} gives
\begin{equation}\label{eq:tail-proof-bridge}
 f(t,x)
 =g_{\beta^2\sigma(t)}(x)
 \E_{\mathrm{br}}\exp\left\{
 -\int_{[0,\sigma(t)]}
 u\left(q,\frac q{\sigma(t)}x
       +\beta\mathfrak b_q^{\,\sigma(t)}\right)
 \mu(\dd q)
 \right\}.
\end{equation}
Let
\[
 M_0=\sup_{0\le q\le1}|u(q,0)|<\infty.
\]
Since $|u_x|\le1$, the bridge integral $I_{t,x}$ in
\eqref{eq:tail-proof-bridge} satisfies
\[
 |I_{t,x}|
 \le M_0+|x|+\beta\|\mathfrak b^{\,\sigma(t)}\|_\infty.
\]
Consequently,
\begin{align}
 \E_{\mathrm{br}}\e^{-I_{t,x}}
 &\le\e^{M_0+|x|}
 \E_{\mathrm{br}}\e^{\beta\|\mathfrak b^{\,\sigma(t)}\|_\infty},
 \label{eq:bridge-factor-upper}\\
 \E_{\mathrm{br}}\e^{-I_{t,x}}
 &\ge\e^{-M_0-|x|}
 \E_{\mathrm{br}}\e^{-\beta\|\mathfrak b^{\,\sigma(t)}\|_\infty}.
 \label{eq:bridge-factor-lower}
\end{align}
A bridge on $[0,s]$ scales as $\sqrt s$ times a bridge on $[0,1]$.
Moreover,
\[
 \mathfrak b_q^{\,1}\stackrel{\mathrm d}=W_q-qW_1,
 \qquad
 \|\mathfrak b^{\,1}\|_\infty
 \le2\sup_{0\le q\le1}|W_q|.
\]
The reflection principle gives Gaussian tails for the latter supremum.
Thus the two expectations in
\eqref{eq:bridge-factor-upper}--\eqref{eq:bridge-factor-lower} are
bounded above and below by positive constants uniformly for $t\in I$.
It follows that
\begin{equation}\label{eq:raw-dual-gaussian}
 c_0\e^{-|x|}g_{\beta^2\sigma(t)}(x)
 \le f(t,x)
 \le C_0\e^{|x|}g_{\beta^2\sigma(t)}(x).
\end{equation}
Since $s_-\le\sigma(t)\le s_+$, absorbing the linear terms into the
quadratic exponents proves \eqref{eq:f-two-sided-gaussian}.

Also,
\[
 U(t,x)\le U(t,0)+|x|,
\]
and $U(t,0)$ is bounded on $I$. Therefore
\[
 p(t,x)=\e^{mU(t,x)}f(t,x)
 \le C\e^{-cx^2+m|x|}
 \le C_I\e^{-c_Ix^2},
\]
which proves \eqref{eq:p-gaussian}.

\smallskip
\noindent
\emph{Step 3: derivatives of $V$.}
Write
\[
 f(t,x)=g_{\beta^2\sigma(t)}(x)\mathcal F_t(x),
\]
where $\mathcal F_t$ is the bridge expectation in
\eqref{eq:tail-proof-bridge}. If $I_{t,x}$ denotes the bridge integral,
then, for $j\ge1$,
\[
 \partial_x^jI_{t,x}
 =\int_{[0,\sigma(t)]}
 \left(\frac q{\sigma(t)}\right)^j
 \partial_x^ju\left(q,\frac q{\sigma(t)}x
 +\beta\mathfrak b_q^{\,\sigma(t)}\right)
 \mu(\dd q).
\]
All positive-order spatial derivatives of $u$ are globally bounded, so
$|\partial_x^jI_{t,x}|\le M_j$ for $j=1,2,3$, uniformly in $t,x$ and
the bridge path. Differentiating under the expectation gives
\begin{align*}
 \mathcal F_t'
 &=\E_{\mathrm{br}}[-I_{t,x}'\e^{-I_{t,x}}],\\
 \mathcal F_t''
 &=\E_{\mathrm{br}}[((I_{t,x}')^2-I_{t,x}'')\e^{-I_{t,x}}],\\
 \mathcal F_t'''
 &=\E_{\mathrm{br}}[
 (-(I_{t,x}')^3+3I_{t,x}'I_{t,x}''-I_{t,x}''')
 \e^{-I_{t,x}}].
\end{align*}
Hence
\begin{equation}\label{eq:tail-F-ratio-bounds}
 \left|\frac{\mathcal F_t^{(j)}}{\mathcal F_t}\right|
 \le C_I,
 \qquad j=1,2,3.
\end{equation}
Now
\[
 V(t,x)
 =\frac{x^2}{2\beta^2\sigma(t)}
  +\frac12\log(2\pi\beta^2\sigma(t))
  -\log\mathcal F_t(x).
\]
The bounds \eqref{eq:Vx-growth}--\eqref{eq:Vxxx-bound} follow from
\eqref{eq:tail-F-ratio-bounds}, the lower bound
$\sigma(t)\ge s_->0$, and the cone inequality $V_{xx}\ge0$.

\smallskip
\noindent
\emph{Step 4: weighted Gaussian tails.}
By \eqref{eq:p-gaussian},
\[
 p(t,x)(1+x)^k\e^{Ax}
 \le C_I(1+x)^k\e^{Ax-c_Ix^2},
 \qquad x\ge0.
\]
For large $x$, the right side is at most $C\e^{-cx^2}$. The elementary
bound
\[
 \int_R^\infty\e^{-cx^2}\dd x
 \le\frac{1}{2cR}\e^{-cR^2}
\]
for $R>0$, followed by an adjustment of constants on bounded $R$,
proves \eqref{eq:weighted-p-tail}.

\smallskip
\noindent
\emph{Step 5: exponential growth of the slope terms.}
Define
\[
 \widetilde\mu=m\delta_0+\mu|_{[b,1]}.
\]
Since $\alpha(s)=m$ for $a<s<b$, continuity from above gives
\[
 \mu([0,a])
 =\lim_{s\downarrow a}\mu([0,s])
 =m.
\]
Constancy of $\alpha$ on the gap also gives $\mu((a,b))=0$. Hence
\[
 m+\mu([b,1])
 =\mu([0,a])+\mu([b,1])
 =1,
\]
so $\widetilde\mu$ is a probability measure. Moreover, for every
$t\in[b,1]$,
\[
 \widetilde\mu([0,t])
 =m+\mu([b,t])
 =\mu([0,t]).
\]
Thus its distribution function agrees with that of $\mu$ on $[b,1]$,  hence
\[
 u_{\widetilde\mu}(b,\cdot)=u_\mu(b,\cdot).
\]
If $\psi=u(b,\cdot)$, then
\[
 U(t,\cdot)=T_{m,T-t}\psi.
\]
Thus \Cref{lem:endpoint-growth}, applied with
$0\le T-t\le T$, gives uniform exponential-in-$x$ bounds for
$C^{-1}$, the required $B$-derivatives of $C$, the quantities
$K,K_B,J,J_B,z$, and every fixed polynomial combination of these
terms and $x$.

The new dual terms are controlled by
\eqref{eq:Vx-growth}--\eqref{eq:Vxxx-bound}. In particular,
\begin{align}
 (R_1)_B
 &=J(V_x+KB)+\frac{KB}{C}V_{xx},
 \label{eq:tail-R1-B}\\
 (R_V)_B
 &=\frac{2V_xV_{xx}-V_{xxx}}{2C}.
 \label{eq:tail-RV-B}
\end{align}
Moreover,
\begin{equation}\label{eq:tail-R0-B}
 (R_0)_B=2CJ_B-5zJ,
\end{equation}
and, since
\[
 N_B=\frac{V_{xx}}C+J,
\]
\begin{align}
 L_B
 ={}&(CJ)_B-(m+J)(N+z)\notag\\
 &-z\left(\frac{V_{xx}}C+m+2J\right).
 \label{eq:tail-L-B}
\end{align}
These formulas and the endpoint-growth lemma prove
\eqref{eq:all-gap-growth}.

We next differentiate at fixed $B$. From
$B=U_x(t,x(t,B))$ and the equation for $U$,
\begin{equation}\label{eq:tail-x-C-time}
 x_t=-KB,
 \qquad
 C_t=C^2J.
\end{equation}
Indeed, at fixed $x$,
\[
 U_{xt}
 =-\frac12(U_{xxx}+2mU_xU_{xx})
 =CKB,
\]
which gives the first identity. At fixed $x$,
\[
 U_{xxxx}=\partial_x(-2Cz)
 =4Cz^2-2C^2(m+J),
\]
and therefore
\[
 C_t\big|_x
 =-\frac12\left(U_{xxxx}+2m(C^2+BU_{xxx})\right)
 =-2Cz^2+C^2J+2mBCz.
\]
Adding the transport term
\[
 U_{xxx}x_t=2CKBz
\]
and using $z=(m+K)B$ gives $C_t=C^2J$ at fixed $B$. Since
$z=-C_B/2$,
\[
 z_t=2CzJ-\frac12C^2J_B,
\]
and therefore
\begin{equation}\label{eq:tail-Phi-t}
 \Phi_t=CJ\Phi+G.
\end{equation}
Since $f_t=f_{xx}/2$,
\[
 V_t=\frac12(V_{xx}-V_x^2)
\]
at fixed $x$. At fixed $B$, one has
\[
 \frac{\dd}{\dd t}U(t,x(t,B))
 =-\frac12(C+mB^2)-KB^2
\]
and
\[
 \frac{\dd}{\dd t}V(t,x(t,B))
 =\frac12(V_{xx}-V_x^2)-KBV_x.
\]
Since $p=\e^{mU-V}$, $C_t/C=CJ$, and
\[
 -\frac12m^2B^2-mKB^2
 =-\frac12z^2+\frac12(KB)^2,
\]
differentiating $\log w=\log2+\log p+\log C$ at fixed $B$ gives
\begin{equation}\label{eq:tail-logw-t}
 (\log w)_t
 =CJ-\frac m2C-\frac12z^2+R_1+R_V.
\end{equation}
Consequently,
\begin{equation}\label{eq:tail-wPhi-t}
 \frac{\partial_t(w\Phi)}w
 =\left(CJ-\frac m2C-\frac12z^2+R_1+R_V\right)\Phi
  +CJ\Phi+G.
\end{equation}
The preceding exponential bounds prove
\eqref{eq:time-integrand-growth}.

\smallskip
\noindent
\emph{Step 6: integrability in slope coordinates.}
For fixed $t$, the map $x\mapsto B=U_x(t,x)$ is a diffeomorphism from
$[0,\infty)$ onto $[0,1)$, with $\dd B=C\dd x$. Therefore
\begin{align}
 \int_0^1w(t,B)|Q(t,B)|\dd B
 &=2\int_0^\infty p(t,x)C(t,x)^2
 |Q(t,U_x(t,x))|\dd x\notag\\
 &\le C\int_0^\infty p(t,x)\e^{Ax}\dd x,
 \label{eq:tail-change-variable}
\end{align}
where $0<C\le1$ was used. This proves
\eqref{eq:wQ-integrability}.

For fixed $R$, continuity and the strict bound $U_x(t,R)<1$ give
$B_I(R)<1$. For integers $n\ge1$, the continuous functions
$t\mapsto U_x(t,n)$ increase pointwise to $1$ on the compact set $I$.
Dini's theorem gives uniform convergence, so $B_I(R)\uparrow1$.
If $B\ge B_I(R)$, then $x(t,B)\ge R$. Applying
\eqref{eq:tail-change-variable} on this region and using
\eqref{eq:weighted-p-tail} proves \eqref{eq:wQ-tail}.

\smallskip
\noindent
\emph{Step 7: the tail integral and endpoint products.}
Changing variables from slope to space,
\begin{align}
 |F(t,B)|
 &\le\int_B^1w(t,D)|\Phi(t,D)|\dd D\notag\\
 &=2\int_{x(t,B)}^\infty
 p(t,y)C(t,y)^2|\Phi(t,U_x(t,y))|\dd y.
 \label{eq:F-space-tail}
\end{align}
The exponential growth of $\Phi$ and
\eqref{eq:weighted-p-tail} prove \eqref{eq:F-gaussian-tail}. Thus, for
any admissible $Q$,
\[
 |F(t,B)Q(t,B)|
 \le C\exp\{-cx(t,B)^2+Ax(t,B)\}.
\]
The uniform convergence $B_I(R)\uparrow1$ proved in Step 6 implies
that $x(t,B)\to\infty$ as $B\uparrow1$, uniformly for $t\in I$.
This proves \eqref{eq:FQ-vanishing}, including its asserted uniformity.
Likewise,
\[
 |wCz|=2pC^2|z|
 \le C\exp\{-cx(t,B)^2+Ax(t,B)\}\longrightarrow0,
\]
proving \eqref{eq:wCz-vanishing}.

The same estimates give
\begin{equation}\label{eq:FR0B-integrable}
 \int_0^1|F(R_0)_B|\dd B<\infty,
 \qquad
 \int_0^1w|L|\dd B<\infty.
\end{equation}
Near $B=0$ all terms are smooth. Near $B=1$, use $\dd B=C\dd x$,
$C\le1$, \eqref{eq:F-gaussian-tail}, and
\eqref{eq:all-gap-growth}.

\smallskip
\noindent
\emph{Step 8: differentiation under the slope integral.}
Assume $I\Subset(0,T)$, and enlarge it slightly to a compact interval
$I'\Subset(0,T)$. On this interior strip, the heat equation for $f$ and
the constant-mass equation for $U$ imply that all quantities in
\eqref{eq:tail-wPhi-t} are jointly continuous for $B<1$. Set
$h(t,B)=w(t,B)\Phi(t,B)$. For every $B_0<1$,
\[
 H_{B_0}(t)=\int_0^{B_0}h(t,B)\dd B
\]
belongs to $C^1(I')$, with derivative obtained under the integral.
Choose $B_0=B_{I'}(R)$. Applying \eqref{eq:wQ-tail} first to
$Q=\Phi$ and then to $Q=\partial_t(w\Phi)/w$ gives
\[
 \sup_{t\in I'}\int_{B_{I'}(R)}^1|h(t,B)|\dd B\longrightarrow0
\]
and
\[
 \sup_{t\in I'}\int_{B_{I'}(R)}^1
 |\partial_th(t,B)|\dd B\longrightarrow0.
\]
Thus the truncated integrals and their derivatives converge uniformly.
The fundamental theorem of calculus then gives
\eqref{eq:gap-H-derivative}.

\smallskip
\noindent
\emph{Step 9: integration by parts and $\mathcal H$.}
Fix a centered time $t$, suppress it from the notation, and note that
$F(0)=0$ and $F_B=-w\Phi$. For $B_0<1$,
\[
 \int_0^{B_0}w\Phi R_0\dd B
 =-[FR_0]_0^{B_0}+\int_0^{B_0}F(R_0)_B\dd B.
\]
The term at \(B=0\) vanishes because \(F(0)=0\). As
\(B_0\uparrow1\), the term at \(B=B_0\) vanishes by
\eqref{eq:FQ-vanishing}, while the remaining integrand is absolutely
integrable by \eqref{eq:FR0B-integrable}. This proves
\eqref{eq:gap-R0-integration-by-parts}.

It remains to treat $\mathcal H$. Since
\[
 \frac{w_B}{w}
 =\frac{mB-V_x}{C}+\frac{C_B}{C}
 =-\frac{N+z}{C}
\]
and
\[
 (Cz)_B=C_Bz+Cz_B=CJ-\Phi,
\]
one obtains
\[
 (wCz)_B=w[-z(N+z)+CJ-\Phi].
\]
Since $F_B=-w\Phi$,
\[
 \mathcal H_B=(wCz)_B-F_B=w[CJ-z(N+z)]=wL.
\]
The derivative $wL$ belongs to $L^1(0,1)$ by
\eqref{eq:FR0B-integrable}. At $B=0$, one has $z(0)=F(0)=0$, while
\eqref{eq:F-gaussian-tail} and \eqref{eq:wCz-vanishing} give
$\mathcal H(1)=0$. The local fundamental theorem of calculus therefore
extends $\mathcal H$ to an absolutely continuous function on $[0,1]$
with the stated endpoint values. Integrating its derivative proves
\eqref{eq:gap-wL-centered}.
\end{proof}

\section{Exclusion of all support gaps and endpoint structure}
\label{s:gap-exclusion}

We now combine the differential inequalities for \(V=-\log r\), the
Gaussian decay and derivative estimates for the transformed density,
and the Cole--Hopf invariant of \Cref{prop:closed-invariant}.
\Cref{prop:arbitrary-gap-transversality} shows that, on any support gap
\((a,b)\) where \(\alpha(s)=m\in(0,1)\),
\[
    \Gamma''(s)=0\quad\Longrightarrow\quad\Gamma'''(s)>0.
\]
Using this crossing property together with the variational conditions at
the endpoints, \Cref{prop:connected-support} rules out every gap and
proves that \(\supp\mu_\beta=[0,q_\beta]\).
\Cref{prop:maximal-support-atom} then uses log-concavity of the
transformed density at \(q_\beta\) to show that the maximal support point
is an atom. Finally, we prove \Cref{thm:main} by combining these facts and using the regularity theorem of Auffinger and Chen to identify the remaining part of the measure
as a smooth density \cite[Theorem 2]{AuffingerChenProperties}.

\begin{proposition}
\label{prop:arbitrary-gap-transversality}
Suppose that
\[
    0<a<b\le1,
    \qquad
    \alpha(s)=m\in(0,1)
    \quad(a<s<b).
\]
Then \(\Gamma\in C^3((a,b))\), and, for every \(s\in(a,b)\),
\begin{equation}\label{eq:arbitrary-gap-transversality}
    \Gamma''(s)=0
    \quad\Longrightarrow\quad
    \Gamma'''(s)>0.
\end{equation}
\end{proposition}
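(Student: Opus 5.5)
The plan is to transfer the proof of \Cref{prop:transversality} to the gap, with the Gaussian factor \(\e^{-x^2/(2s)}\) of the first-gap density replaced by the transformed density \(f=\e^{-V}\) of \Cref{prop:arbitrary-gap-dual-tail}. I would work in the rescaled time \(t=\beta^2(s-a)\) and set \(\widehat\Gamma(t)=\Gamma(\sigma(t))=\E[U_x(t,\widehat X_t)^2]\). Since \(\sigma\) is affine, it suffices to prove the crossing property for \(\widehat\Gamma\) on \((0,T)\). By \eqref{eq:Gamma-second-gap}, and because \(U_{xxx}=CC_B=-2Cz\) in slope coordinates, we have
\[
 \tfrac12\widehat\Gamma''(t)=\E\bigl[C^2(z^2\cdot 2-mC)\bigr]\big/1=\int_0^1w\Phi\dd B=H(t),
\]
after the change of variables \(\dd B=C\dd x\) and evenness of \(p\). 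By \Cref{prop:arbitrary-gap-dual-tail}, \(H\in C^1\) on compact subintervals of \((0,T)\), so \(\Gamma\in C^3((a,b))\). It therefore remains to show that \(H(t)=0\) implies \(H'(t)>0\).

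At a centered time, \eqref{eq:tail-wPhi-t} gives
\[
 H'=\int_0^1 w\bigl[(R_V+R_1+R_0)\Phi+G\bigr]\dd B .
\]
I would handle the terms one at a time.

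\emph{The \(R_1\)-term.} By \eqref{eq:tail-R1-B}, \((R_1)_B=J(V_x+KB)+KBV_{xx}/C\ge0\), using \(K,J\ge0\) from \Cref{prop:closed-invariant} together with \(V_x,V_{xx}\ge0\). Since \(\Phi\) is strictly increasing (\(\Phi_B=2z(2J+3m)>0\)), the covariance identity yields \(\int wR_1\Phi\ge0\).

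\emph{The \(R_V\)-term.} This replaces the strictly positive term \(x^2/(2s^2)\) of the first gap. By \eqref{eq:tail-RV-B}, \((R_V)_B=(2V_xV_{xx}-V_{xxx})/(2C)\ge0\), using \(V_x,V_{xx}\ge0\) and \(V_{xxx}\le0\) from \eqref{eq:gap-dual-cone}. The covariance identity then gives \(\int wR_V\Phi\ge0\). Strictness should come from \(R_V\) not being constant in \(B\): if \((R_V)_B\equiv0\), then \(V_xV_{xx}\equiv0\) and \(V_{xxx}\equiv0\), forcing \(f\) to be a constant in \(x\), which contradicts the Gaussian bound \eqref{eq:f-two-sided-gaussian}. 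Hence \(R_V\) is nondecreasing and nonconstant, and the double-integral covariance is strictly positive.

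\emph{The \(R_0\)- and \(G\)-terms.} Here I would copy the first-gap argument verbatim. First, \eqref{eq:gap-R0-integration-by-parts} rewrites \(\int w\Phi R_0\) as \(\int w\tau(R_0)_B\), with \(\tau=F/w\ge0\), the sign \(\tau\ge0\) following from centering and the monotonicity of \(\Phi\). Next I would show \(\tau\le Cz\) through \(\mathcal H\ge0\). This uses \eqref{eq:gap-Hcal-endpoints}, \eqref{eq:gap-wL-centered}, and the fact that \(L\) is strictly decreasing, which follows from \eqref{eq:tail-L-B}: indeed \((CJ)_B\le zJ\) by \(CJ_B\le3zJ\), while the remaining terms are \(<-zJ\), because \(N=V_x+KB\ge0\), \(V_{xx}/C\ge0\), and \(z>0\) on \((0,1)\). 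Then \((R_0)_B=2CJ_B-5zJ\) together with \(CJ_B\le3zJ\) gives \(\int w(R_0\Phi+G)\ge\int wzJ\tau\ge0\), exactly as in \eqref{eq:last-terms-positive}. All endpoint limits and integrability are supplied by \Cref{prop:arbitrary-gap-dual-tail}.

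I expect the main obstacle to be strictness. In the first gap, strictness came from \(x^2\), which is strictly increasing. Here the available strictly increasing candidate is \(R_V\), so the key step is to verify that \((R_V)_B>0\) on a set of positive measure. One route is \(V_{xx}>0\) somewhere together with \(V_x>0\) for \(x>0\); this could be argued from the bridge formula, since \(V\) is \(x^2/(2\beta^2\sigma)\) plus a bounded-derivative perturbation, so \(V_x\to\infty\) and \(V_x\) cannot be identically zero. As a fallback, if that fails, I would instead extract strictness from the \(\int wzJ\tau\) term or from the \(R_1\)-term. Once strictness is in hand, the conclusion \(H'(t)>0\) follows.
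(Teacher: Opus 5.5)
Your proposal is correct and follows essentially the same route as the paper. Both use the decomposition $H'=\int_0^1 w\bigl[(R_1+R_V+R_0)\Phi+G\bigr]\dd B$, covariance bounds for the $R_1$- and $R_V$-terms, and the $\tau\le Cz$ argument via $\mathcal H$ for the $R_0\Phi+G$ terms. In both, strictness comes from the $R_V$-covariance.

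Your non-constancy argument already resolves the ``main obstacle'' you flag, so no fallback is needed. If $(R_V)_B\equiv0$, then $V_{xxx}\equiv0$ and $V_xV_{xx}\equiv0$; since $V_x(0)=0$ by parity, $V$ is constant, which contradicts the Gaussian bounds. The paper reaches the same conclusion differently, via $f_{xx}=2cf$ and a case analysis on the sign of $c$.
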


\begin{proof}
Set
\[
    T=\beta^2(b-a),
    \qquad
    \sigma(t)=a+\frac{t}{\beta^2},
    \qquad
    U(t,x)=u(\sigma(t),x),
    \qquad 0\le t\le T.
\]
Then
\[
    U_t=-\frac12(U_{xx}+mU_x^2).
\]
Let
\[
    Y_t=X_{\sigma(t)},
    \qquad
    Q(t)=\Gamma(\sigma(t)).
\]
After shifting and rescaling the Brownian motion, \(Y\) satisfies
\[
    \dd Y_t=mU_x(t,Y_t)\dd t+\dd W_t.
\]
Let \(p(t,\cdot)\) be the density of \(Y_t\), and define
\[
    f(t,x)=p(t,x)\e^{-mU(t,x)},
    \qquad
    V(t,x)=-\log f(t,x).
\]
By \Cref{prop:dual-density-cone,prop:arbitrary-gap-dual-tail},
\begin{equation}\label{eq:arbitrary-gap-dual-cone}
    V_x\ge0,
    \qquad
    V_{xx}\ge0,
    \qquad
    V_{xxx}\le0
    \qquad(x>0).
\end{equation}
The growth bounds \eqref{eq:all-gap-growth} and
\eqref{eq:wQ-integrability} give absolute integrability of the weighted
terms below. The time differentiation at fixed slope is justified by
\eqref{eq:time-integrand-growth} and
\eqref{eq:gap-H-derivative}, while the endpoint integration by parts is
justified by \eqref{eq:FQ-vanishing} and
\eqref{eq:gap-R0-integration-by-parts}.

We next verify that the slope invariant applies on this gap. Define
\[
    \widetilde\mu=m\delta_0+\mu|_{[b,1]}.
\]
Since \(\alpha(s)=m\) for \(a<s<b\), continuity from above gives
\[
 \mu([0,a])
 =\lim_{s\downarrow a}\mu([0,s])
 =m.
\]
Constancy of \(\alpha\) on the gap also gives \(\mu((a,b))=0\). Hence
\[
 m+\mu([b,1])
 =\mu([0,a])+\mu([b,1])
 =1,
\]
so \(\widetilde\mu\) is a probability measure. Moreover, for every
\(s\in[b,1]\),
\[
 \widetilde\mu([0,s])
 =m+\mu([b,s])
 =\mu([0,s]).
\]
Thus its distribution function agrees with that of \(\mu\) on
\([b,1]\). 
Consequently,
\[
    u_{\widetilde\mu}(b,\cdot)=u_\mu(b,\cdot).
\]
Writing \(\psi=u(b,\cdot)\), one has
\[
    U(t,\cdot)=T_{m,T-t}\psi.
\]
Thus \Cref{prop:closed-invariant} applies with active parameter \(m\).

Fix \(t\in(0,T)\). On the positive half-line introduce the slope
coordinate
\[
    B=U_x(t,x),
    \qquad
    C=U_{xx}(t,x),
    \qquad
    x=x(t,B),
    \qquad 0\le B<1.
\]
Define
\[
    z=-\frac12C_B=(m+K)B,
    \qquad
    J=K+BK_B.
\]
The closed invariant gives
\begin{equation}\label{eq:arbitrary-gap-closed-cone}
    K,K_B,J,J_B\ge0,
    \qquad
    CJ_B\le3zJ.
\end{equation}
Set
\begin{align}
    N&=V_x+KB,\label{eq:arbitrary-gap-N}\\
    \Phi&=2z^2-mC,\label{eq:arbitrary-gap-Phi}\\
    w&=2p(t,x(t,B))C(t,B).\label{eq:arbitrary-gap-w}
\end{align}
All derivatives of \(V\) in slope coordinates are evaluated at
\((t,x(t,B))\).

The time-changed stochastic identities give
\[
    Q'(t)=\E C(t,Y_t)^2,
    \qquad
    Q''(t)=\E\bigl[U_{xxx}(t,Y_t)^2-2mC(t,Y_t)^3\bigr].
\]
Since
\[
    U_{xxx}=C_x=CC_B=-2Cz,
\]
changing variables by \(\dd B=C\dd x\) and using evenness gives
\begin{equation}\label{eq:arbitrary-gap-Qpp}
    \frac12Q''(t)=\int_0^1w(t,B)\Phi(t,B)\dd B.
\end{equation}
In particular,
\[
    Q''(t)=\beta^{-4}\Gamma''(\sigma(t)),
    \qquad
    Q'''(t)=\beta^{-6}\Gamma'''(\sigma(t)).
\]

We record the required spatial identities. Since \(x_B=C^{-1}\),
\(z_B=m+J\), and \((KB)_B=J\),
\begin{align}
    N_B&=\frac{V_{xx}}C+J,\label{eq:arbitrary-gap-NB}\\
    \frac{w_B}{w}&=-\frac{N+z}{C},\label{eq:arbitrary-gap-wB}\\
    \Phi_B&=2z(2J+3m)>0.\label{eq:arbitrary-gap-PhiB}
\end{align}
The inequality in \eqref{eq:arbitrary-gap-PhiB} is strict for
\(0<B<1\), because \(m>0\) and \(z=(m+K)B>0\).

We next differentiate at fixed \(B\). The equation for \(U\) gives
\begin{equation}\label{eq:arbitrary-gap-x-C-time}
    x_t=-KB,
    \qquad
    C_t=C^2J.
\end{equation}
Consequently,
\begin{align}
    z_t&=2CzJ-\frac12C^2J_B,\label{eq:arbitrary-gap-zt}\\
    \Phi_t&=CJ\Phi+G,
    \qquad
    G=2Cz(3zJ-CJ_B)\ge0.\label{eq:arbitrary-gap-Phit}
\end{align}
Because \(f_t=f_{xx}/2\),
\[
    V_t=\frac12(V_{xx}-V_x^2)
\]
at fixed \(x\). Using \eqref{eq:arbitrary-gap-x-C-time},
\(p=\e^{mU-V}\), and \(C_t/C=CJ\), one obtains
\begin{equation}\label{eq:arbitrary-gap-logw-time}
    (\log w)_t
    =CJ-\frac m2C-\frac12z^2+R_1+R_V,
\end{equation}
where
\begin{equation}\label{eq:arbitrary-gap-R1-RV}
    R_1=KBV_x+\frac12(KB)^2,
    \qquad
    R_V=\frac12(V_x^2-V_{xx})=\frac{f_{xx}}{2f}.
\end{equation}
Define also
\begin{equation}\label{eq:arbitrary-gap-R0}
    R_0=2CJ-\frac m2C-\frac12z^2.
\end{equation}
If
\begin{equation}\label{eq:arbitrary-gap-centered}
    \int_0^1w\Phi\dd B=0,
\end{equation}
then \Cref{prop:arbitrary-gap-dual-tail} and
\eqref{eq:arbitrary-gap-Phit}--\eqref{eq:arbitrary-gap-R0} yield
\begin{equation}\label{eq:arbitrary-gap-Hprime-decomposition}
    \frac{\dd}{\dd t}\int_0^1w\Phi\dd B
    =\int_0^1w\bigl(R_1\Phi+R_V\Phi+R_0\Phi+G\bigr)\dd B.
\end{equation}

We first treat the two covariance terms. Differentiating at fixed \(t\),
\begin{align}
    (R_1)_B
    &=J(V_x+KB)+\frac{KB}{C}V_{xx}\ge0,
    \label{eq:arbitrary-gap-R1B}\\
    (R_V)_B
    &=\frac{2V_xV_{xx}-V_{xxx}}{2C}\ge0.
    \label{eq:arbitrary-gap-RVB}
\end{align}
The signs follow from \eqref{eq:arbitrary-gap-dual-cone} and
\eqref{eq:arbitrary-gap-closed-cone}.

The function \(R_V\) is not constant. Indeed,
\eqref{eq:f-two-sided-gaussian} shows that \(f\) is integrable. If
\(R_V\equiv c\), then \(f_{xx}=2cf\) on \((0,\infty)\). Since \(f\)
is smooth and even,
\(f_x(0)=0\). If \(c>0\), then
\[
    f(x)=f(0)\cosh(\sqrt{2c}\,x),
\]
which is not integrable. If \(c=0\), then \(f\) is constant, again
not integrable. If \(c<0\), then
\[
    f(x)=f(0)\cos(\sqrt{-2c}\,x),
\]
which is not positive on the whole half-line. Each case contradicts the
properties of the transformed density.

Let \(W_0=\int_0^1w\dd B\), which is finite and strictly positive.
Whenever \eqref{eq:arbitrary-gap-centered} holds, let \(R\) satisfy the
exponential growth condition \eqref{eq:admissible-Q}. Then \(R\Phi\)
also satisfies such a condition, and
\eqref{eq:wQ-integrability}, applied to \(R\), \(\Phi\), and
\(R\Phi\), shows that all terms below are absolutely integrable. Thus
Fubini's theorem and the centered identity give
\begin{equation}\label{eq:arbitrary-gap-covariance}
 \int_0^1wR\Phi\dd B
 =\frac1{2W_0}\int_0^1\!\int_0^1
 w(B)w(D)[R(B)-R(D)][\Phi(B)-\Phi(D)]\dd B\dd D.
\end{equation}
The functions \(R_1\) and \(R_V\) satisfy the required growth condition
by \eqref{eq:all-gap-growth}, so the identity applies to both of them. 
Since \(\Phi\) is strictly increasing, \eqref{eq:arbitrary-gap-R1B}
gives the first sign below. For the second, the continuity and
nonconstancy of the nondecreasing function \(R_V\) provide two open
subintervals on which its values are strictly ordered. The integrand in
\eqref{eq:arbitrary-gap-covariance} is then strictly positive on a set
of positive two-dimensional measure, since \(w>0\). Hence
\begin{equation}\label{eq:arbitrary-gap-covariance-signs}
    \int_0^1wR_1\Phi\dd B\ge0,
    \qquad
    \int_0^1wR_V\Phi\dd B>0.
\end{equation}

It remains to control \(R_0\Phi+G\). Define
\begin{equation}\label{eq:arbitrary-gap-tail-ratio}
    F(B)=\int_B^1w(D)\Phi(D)\dd D,
    \qquad
    \tau(B)=\frac{F(B)}{w(B)}.
\end{equation}
By \eqref{eq:arbitrary-gap-PhiB}, \(\Phi\) is strictly increasing.
Moreover,
\[
    \Phi(0)=-mC(0)<0,
\]
while \(z\ge mB\) and \(C\to0\) as \(B\uparrow1\), so
\[
    \liminf_{B\uparrow1}\Phi(B)\ge2m^2>0.
\]
Thus \(\Phi\) has a unique zero. Using the centered identity
\eqref{eq:arbitrary-gap-centered} on the left of that zero and the
definition of \(F\) on its right gives
\begin{equation}\label{eq:arbitrary-gap-tau-lower}
    \tau\ge0.
\end{equation}

We claim that
\begin{equation}\label{eq:arbitrary-gap-tau-upper}
    \tau\le Cz.
\end{equation}
Set
\[
    \mathcal H(B)=w(B)C(B)z(B)-F(B),
    \qquad
    L=CJ-z(N+z).
\]
By \eqref{eq:arbitrary-gap-wB} and
\[
    (Cz)_B=CJ-\Phi,
\]
one has
\begin{equation}\label{eq:arbitrary-gap-HcalB}
    \mathcal H_B=wL.
\end{equation}
Furthermore,
\[
    (CJ)_B=-2zJ+CJ_B\le zJ,
\]
whereas
\begin{align*}
 [z(N+z)]_B
 &=(m+J)(N+z)
   +z\left(\frac{V_{xx}}C+m+2J\right)\\
 &>zJ.
\end{align*}
Hence
\begin{equation}\label{eq:arbitrary-gap-LB}
    L_B<0
    \qquad(0<B<1).
\end{equation}
At a centered time, \Cref{prop:arbitrary-gap-dual-tail} gives
\[
    \mathcal H(0)=\mathcal H(1)=0,
    \qquad
    \int_0^1wL\dd B=0,
\]
and \(\mathcal H\) is absolutely continuous. Since \(L\) is strictly decreasing and \(w>0\), the centered
identity \(\int_0^1wL\dd B=0\) forces \(L\) to take both positive
and negative values. It therefore has a unique zero \(B_L\). For
\(B\le B_L\),
\[
    \mathcal H(B)=\int_0^BwL\dd D\ge0,
\]
and for \(B\ge B_L\),
\[
    \mathcal H(B)=-\int_B^1wL\dd D\ge0.
\]
Thus \(F\le wCz\), which proves
\eqref{eq:arbitrary-gap-tau-upper}.

Since \(F_B=-w\Phi\), the integration-by-parts conclusion in
\Cref{prop:arbitrary-gap-dual-tail} gives
\begin{equation}\label{eq:arbitrary-gap-R0-ibp}
    \int_0^1w\Phi R_0\dd B
    =\int_0^1w\tau(R_0)_B\dd B.
\end{equation}
Using \(C_B=-2z\) and \(z_B=m+J\),
\begin{equation}\label{eq:arbitrary-gap-R0B}
    (R_0)_B=2CJ_B-5zJ.
\end{equation}
Combining \eqref{eq:arbitrary-gap-Phit},
\eqref{eq:arbitrary-gap-R0-ibp}, and
\eqref{eq:arbitrary-gap-R0B}, we obtain
\begin{align}
 \int_0^1w(R_0\Phi+G)\dd B
 =\int_0^1w\bigl[
  2CJ_B(\tau-Cz)+zJ(6Cz-5\tau)
 \bigr]\dd B.
 \label{eq:arbitrary-gap-last-terms}
\end{align}
By \eqref{eq:arbitrary-gap-tau-lower}--
\eqref{eq:arbitrary-gap-tau-upper}, one has
\(\tau-Cz\le0\). Since \(CJ_B\le3zJ\), multiplication by the
nonpositive quantity \(2(\tau-Cz)\) reverses the inequality, and
\begin{align*}
 \int_0^1w(R_0\Phi+G)\dd B
 &\ge
 \int_0^1w\bigl[
  6zJ(\tau-Cz)+zJ(6Cz-5\tau)
 \bigr]\dd B\\
 &=\int_0^1wzJ\tau\dd B\ge0.
\end{align*}
Together with \eqref{eq:arbitrary-gap-Hprime-decomposition} and
\eqref{eq:arbitrary-gap-covariance-signs}, this proves
\[
    \int_0^1w\Phi\dd B=0
    \quad\Longrightarrow\quad
    \frac{\dd}{\dd t}\int_0^1w\Phi\dd B>0.
\]
By \eqref{eq:arbitrary-gap-Qpp},
\[
    Q''(t)=0
    \quad\Longrightarrow\quad
    Q'''(t)>0.
\]
Finally,
\[
    Q''(t)=\beta^{-4}\Gamma''(\sigma(t)),
    \qquad
    Q'''(t)=\beta^{-6}\Gamma'''(\sigma(t)),
\]
so \eqref{eq:arbitrary-gap-transversality} follows.
\end{proof}

\begin{proposition}
\label{prop:connected-support}
For every \(\beta>1\), there exists \(q_\beta\in(0,1)\) such that
\begin{equation}\label{eq:connected-support}
    \supp\mu_\beta=[0,q_\beta].
\end{equation}
\end{proposition}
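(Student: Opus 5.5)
Let \(q_\beta=\max\supp\mu\). By \Cref{lem:delta0-excluded} and \Cref{prop:zero-accumulation}, \(q_\beta>0\). The bound \(q_\beta<1\) should follow from self-consistency, since \(\Gamma(q_\beta)=q_\beta\). Indeed \(\Gamma(1)=\E\tanh^2(X_1)<1\) because \(X_1\) has a density, and more generally \(\Gamma(s)\le\E u_x(s,X_s)^2<1\). So \(q_\beta=1\) would give \(1=\Gamma(1)<1\).

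The main task is to rule out gaps. Suppose \(\supp\mu\ne[0,q_\beta]\). Since the support is closed and contains \(0\) and \(q_\beta\), there is a maximal open interval \((a,b)\subset[0,q_\beta]\) disjoint from the support, with \(a,b\in\supp\mu\). By \Cref{prop:zero-accumulation}, zero accumulates, so \(a>0\). On \((a,b)\) the distribution function is constant, \(\alpha\equiv m\). Here \(m>0\) because \(a\in\supp\mu\) gives \(\mu([0,a])>0\) (at least \(\mu\) of small neighbourhoods is positive and \(0<a\)), and \(m<1\) because \(b\in\supp\mu\) lies above. Thus \Cref{prop:arbitrary-gap-transversality} applies. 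It gives \(\Gamma\in C^3((a,b))\), and every zero of \(\Gamma''\) in \((a,b)\) is crossed from negative to positive. By the argument in \Cref{cor:first-gap-crossing}, \(\Gamma''\) therefore has at most one sign change on \((a,b)\), and it goes from \(-\) to \(+\).

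Next I use the endpoint conditions. \Cref{prop:JT} gives \(G(a)=G(b)\), so \(\int_a^b(\Gamma(r)-r)\dd r=0\). Self-consistency gives \(\Gamma(a)=a\) and \(\Gamma(b)=b\). Put \(h=\Gamma-\mathrm{id}\). Then \(h(a)=h(b)=0\), \(\int_a^b h=0\), and \(h''=\Gamma''\). Since \(a,b\) minimize \(G\) and \(G'=-\tfrac{\beta^2}{2}h\), one-sided optimality at the endpoints gives \(\int_a^s h\le0\) and \(\int_s^b h\ge0\) near the endpoints, and in fact \(\int_a^s h\le 0\) for all \(s\in[a,b]\), because \(G(s)\ge G(a)\).

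Now I case on the sign pattern of \(h''\). If \(h''\) has one sign on \((a,b)\), then \(h\) is convex or concave with zero boundary values. If \(h''\) is nonzero somewhere, \(h\) has a strict sign on \((a,b)\), which contradicts \(\int h=0\). If \(h''\equiv0\), then \(h\equiv0\), which forces \(\Gamma''=0\) identically and contradicts transversality. Otherwise \(h''<0\) on \((a,c)\) and \(h''>0\) on \((c,b)\), so \(h\) is concave then convex. With \(h(a)=h(b)=0\) and \(\int h=0\), this forces \(h>0\) near \(a\) and \(h<0\) near \(b\). More carefully: a concave-convex function vanishing at both ends and having zero integral has exactly one interior zero, is positive before it and negative after it. Then \(\int_a^s h>0\) for \(s\) near \(a\), which contradicts \(G(s)\ge G(a)\). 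This contradiction excludes the gap.

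The main obstacle I expect is the case analysis for the concave-convex shape. One must show that the sign of \(h\) near \(a\) is positive rather than negative, using only \(h(a)=h(b)=0\) and \(\int h=0\). A concave arc starting at zero lies above its chord. If \(h\) were negative just after \(a\), concavity on \((a,c)\) would keep it negative up to \(c\), and convexity afterwards with \(h(b)=0\) would keep it at most zero. Then \(h\le0\) throughout with \(\int h=0\), which contradicts the nontrivial sign change. Hence \(h>0\) near \(a\), as needed. Degenerate cases, such as \(h\) vanishing to high order near \(a\), are handled by the same concavity argument.
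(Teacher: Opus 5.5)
Your proof is correct. It shares the paper's skeleton: a gap $(a,b)$ with $0<a<b\le q_\beta$ and $a,b\in\supp\mu$, constant mass $\alpha\equiv m\in(0,1)$ on it, and the fact from \Cref{prop:arbitrary-gap-transversality} that $h''=\Gamma''$ has at most one zero, crossed from $-$ to $+$. Where you differ is in which endpoint information closes the argument.

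The paper invokes the stability condition \eqref{eq:support-curvature} at both endpoints to get the one-sided bounds $h'(a)\le0$ and $h'(b)\le0$. Since $h'$ is then monotone or decreasing-then-increasing, $h'\le\max\{h'(a),h'(b)\}\le0$ on $[a,b]$, and $h(a)=h(b)$ forces $h'\equiv0$.

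You never use \eqref{eq:support-curvature}. Instead you use the integral consequences of \Cref{prop:JT}: $\int_a^bh=0$ from $G(a)=G(b)$, and $\int_a^sh\le0$ from $G(s)\ge G(a)$. You combine these with a shape argument on $h$ itself. Strict concavity on $(a,c)$ makes $h(s)/(s-a)$ strictly decreasing, so one of two things happens:
\begin{itemize}
\item[(i)] $h>0$ just to the right of $a$, which contradicts $\int_a^sh\le0$;
\item[(ii)] $h<0$ on $(a,c]$, in which case convexity on $[c,b]$ with $h(b)=0$ gives $h<0$ on all of $(a,b)$, which contradicts $\int_a^bh=0$.
\end{itemize}

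The paper's version is shorter and needs no integral identity. However, it relies on the second self-consistency condition and on identifying the one-sided limits of $h'$ at the endpoints through \eqref{eq:Gamma-prime}. Yours needs only the minimality of $G$ together with self-consistency $h(a)=h(b)=0$, and uses one-sided minimality only at $a$. It also parallels the integral identity used in the first-gap argument of \Cref{s:accumulation}. The cost is a slightly longer concave--convex case analysis. Your treatment of $q_\beta<1$ is the same as the paper's.
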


\begin{proof}
Write \(S=\supp\mu\) and let
\[
    q_\beta=\max S.
\]
The set \(S\) is nonempty and compact. By
\Cref{prop:zero-accumulation}, \(q_\beta>0\).

Suppose that \(S\ne[0,q_\beta]\). Choose
\(x\in(0,q_\beta)\setminus S\), and let \((a,b)\) be the connected
component of \([0,q_\beta]\setminus S\) containing \(x\). Since
zero is an accumulation point of \(S\setminus\{0\}\), one cannot
have \(a=0\). Thus
\[
    0<a<b\le q_\beta,
    \qquad
    a,b\in S.
\]
Put
\[
    m=\mu([0,a]).
\]
Since \(0\in S\) and \(a>0\), one has \(m>0\). Since \(b\in S\)
and \(b>a\), a sufficiently small neighborhood of \(b\) is disjoint
from \([0,a]\) and has positive \(\mu\)-mass, so \(m<1\). Moreover,
\[
    \alpha(s)=m
    \qquad(a<s<b).
\]
Hence \Cref{prop:arbitrary-gap-transversality} applies.

Set
\[
    h(s)=\Gamma(s)-s,
    \qquad a\le s\le b.
\]
By \eqref{eq:self-consistency},
\[
 h(a)=h(b)=0.
\]
Moreover, \eqref{eq:Gamma-prime} shows that \(h'\) on \((a,b)\)
extends continuously to the endpoints from within the gap. By
\eqref{eq:support-curvature}, its one-sided endpoint values satisfy
\[
 h'_+(a)
 =\beta^2\E C_a^2-1\le0,
 \qquad
 h'_-(b)
 =\beta^2\E C_b^2-1\le0.
\]
Below we write these one-sided values as \(h'(a)\) and \(h'(b)\).
Thus
\begin{equation}\label{eq:connected-gap-endpoints}
    h(a)=h(b)=0,
    \qquad
    h'(a)\le0,
    \qquad
    h'(b)\le0.
\end{equation}
On \((a,b)\),
\[
    h''=\Gamma'',
    \qquad
    h'''=\Gamma'''.
\]
Every zero of \(h''\) is therefore crossed strictly from negative to
positive.

We claim that \(h''\) has at most one zero. If \(c_1<c_2\) were two
zeros, then \(h'''(c_1)>0\), so \(h''>0\) immediately to the right of
\(c_1\). Let
\[
    c=\inf\{s\in(c_1,c_2]:h''(s)=0\}.
\]
Then \(h''>0\) on \((c_1,c)\) and \(h''(c)=0\), whence
\[
    \limsup_{r\downarrow0}
    \frac{h''(c)-h''(c-r)}r\le0.
\]
This contradicts \(h'''(c)>0\).

If \(h''\) has no zero, continuity forces it to have a constant sign,
so \(h'\) is monotone. If it has one zero \(c\), the strict crossing
property gives \(h''<0\) immediately to the left of \(c\) and
\(h''>0\) immediately to its right; the absence of any other zero
extends these signs throughout \((a,c)\) and \((c,b)\). Thus
\(h'\) first decreases and then increases. In either case,
\[
    \max_{[a,b]}h'=\max\{h'(a),h'(b)\}\le0
\]
by \eqref{eq:connected-gap-endpoints}. Therefore \(h'\le0\) on
\([a,b]\). But
\[
    0=h(b)-h(a)=\int_a^bh'(s)\dd s.
\]
Continuity forces \(h'\equiv0\), and hence \(h''\equiv0\) on
\((a,b)\), contradicting
\Cref{prop:arbitrary-gap-transversality}. Thus
\(S=[0,q_\beta]\).

It remains to show that \(q_\beta<1\). If \(q_\beta=1\), then
\(1\in S\), so self-consistency gives
\[
    1=\Gamma(1)=\E\tanh^2(X_1).
\]
Since \(X_1\) is finite almost surely and
\(\tanh^2x<1\) for every finite \(x\), the right side is strictly
less than one, a contradiction. Hence \(q_\beta\in(0,1)\).
\end{proof}

\begin{proposition}
\label{prop:maximal-support-atom}
Let \(q_\beta\) be as in \Cref{prop:connected-support}. Then
\begin{equation}\label{eq:maximal-support-atom}
    \mu_\beta(\{q_\beta\})>0.
\end{equation}
\end{proposition}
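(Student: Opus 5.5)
We argue by contradiction and assume \(\mu(\{q_\beta\})=0\), writing \(q=q_\beta\) and \(\mu=\mu_\beta\). By \Cref{prop:connected-support}, \(\supp\mu=[0,q]\) and \(q<1\), so \(\alpha(s)=1\) for \(s\ge q\). Under the no-atom assumption \(\alpha\) is also continuous at \(q\), with \(\alpha(q-)=1\). On \([q,1]\) the Parisi PDE has the explicit solution
\[
u(s,x)=\log\cosh x+\tfrac{\beta^2}{2}(1-s),
\]
so that
\[
C_q=\sech^2X_q,\qquad D_q=-2\tanh X_q\sech^2X_q .
\]

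\textbf{Step 1: the self-consistency identity at \(q\).} By \eqref{eq:self-consistency}, \(\Gamma(s)=s\) on \([0,q]\). Since \(\Gamma'\) is continuous by \eqref{eq:Gamma-prime}, this gives \(\Gamma'\equiv1\) on \([0,q]\). Then \eqref{eq:Gamma-prime-integral} yields
\[
\int_r^t\E[D_s^2-2\alpha(s)C_s^3]\dd s=0\qquad\text{for all } 0\le r<t\le q,
\]
so the integrand vanishes for a.e.\ \(s<q\). The processes \(C,D\) are bounded with continuous paths, and \(\alpha(s)\to\alpha(q)=1\) as \(s\uparrow q\). Letting \(s\uparrow q\) along good times and using dominated convergence gives
\[
\E[D_q^2]=2\E[C_q^3].
\]
Substituting the explicit forms of \(C_q\) and \(D_q\) and using \(\tanh^2=1-\sech^2\) gives \(4\E\sech^4X_q-4\E\sech^6X_q=2\E\sech^6X_q\), that is,
\[
\frac{\E\sech^6X_q}{\E\sech^4X_q}=\frac23 .
\]

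\textbf{Step 2: a lower bound for the same ratio from log-concavity.} Apply \Cref{prop:dual-density-cone} at time \(q\). Since \(\alpha(q)=1\), the transformed density is \(r(q,x)=p(q,x)\e^{-u(q,x)}\), and \(\e^{u(q,x)}\) is a constant multiple of \(\cosh x\). Hence \(p(q,x)\propto r(x)\cosh x\), where \(r\) is even, log-concave, and nonincreasing on \((0,\infty)\) because \(V_x\ge0\). Change variables by \(y=\tanh x\), so that \(\dd x=\dd y/(1-y^2)\) and \(\sech^2x=1-y^2\). Writing \(\tilde r(y)=r(\operatorname{artanh}y)\), one gets
\[
\E\sech^{2k}X_q\propto\int_{-1}^1\tilde r(y)(1-y^2)^{k-3/2}\dd y .
\]
Therefore the ratio equals \(\int_\nu(1-y^2)\), where \(\nu\) is the probability measure on \((-1,1)\) proportional to \(\tilde r(y)(1-y^2)^{1/2}\dd y\).

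Under the reference measure \(\nu_0\propto(1-y^2)^{1/2}\dd y\) (that is, \(\tilde r\) constant), the same quantity equals \((3\pi/8)/(\pi/2)=3/4\). Now \(\tilde r\) is even and nonincreasing in \(|y|\), while \(1-y^2\) is decreasing in \(|y|\). A Chebyshev-type covariance inequality on \([0,1)\) with respect to \(\nu_0\) therefore gives \(\int_\nu(1-y^2)\ge3/4\). The same covariance identity used in \eqref{eq:arbitrary-gap-covariance} suits this purpose.

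Strictness holds because \(\tilde r\) is not constant: \(r(q,\cdot)\) is integrable on \(\R\) and strictly positive. A nonconstant nonincreasing function paired with a strictly decreasing one makes the covariance integrand positive on a set of positive measure. Hence the ratio is strictly greater than \(3/4\), which contradicts the value \(2/3\) from Step 1. This proves \eqref{eq:maximal-support-atom}.

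\textbf{Main obstacle.} The delicate point is Step 1. We must justify that \(\Gamma'\equiv1\) on the support forces the integrand in \eqref{eq:Gamma-prime-integral} to vanish up to the endpoint. This needs continuity of \(s\mapsto\E[D_s^2-2\alpha(s)C_s^3]\) at \(q^-\), and that in turn uses the no-atom assumption through \(\alpha(q-)=\alpha(q)=1\) together with the bounded, continuous paths of \(C\) and \(D\). The comparison in Step 2 is routine once the change of variables is in place.
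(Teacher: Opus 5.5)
Your proposal is correct and follows the paper's proof essentially step for step. You use the same passage to the limit $s\uparrow q$ in the vanishing integrand of \eqref{eq:Gamma-prime-integral} to get the ratio $2/3$, and the same Chebyshev covariance argument for the even, log-concave (hence monotone on the half-line) transformed density against the reference weight $\sech^3x\,\dd x$ to get a ratio strictly above $3/4$. Your substitution $y=\tanh x$ only rewrites that weight as $(1-y^2)^{1/2}\dd y$ and the value $3/4$ as a ratio of Beta-type integrals.
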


\begin{proof}
The first part of the argument follows the endpoint analysis in the
proof of \cite[Theorem~4]{AuffingerChenProperties}: continuity of the
Parisi measure at its maximal support point, together with the
self-consistency identities, yields a relation between the fourth and
sixth powers of the terminal curvature.  Our new input is the
log-concavity argument below, which contradicts this relation without
any restriction on $\beta$ (besides $\beta>1$).

Write \(q=q_\beta\), and suppose for contradiction that
\[
    \mu(\{q\})=0.
\]
Since \(\supp\mu=[0,q]\), self-consistency gives
\[
    \Gamma(s)=s
    \qquad(0\le s\le q).
\]
Thus
\[
    \Gamma'(s)=1
    \qquad(0<s<q).
\]
By \eqref{eq:Gamma-prime-integral},
\[
    \int_r^t
    \E\bigl[D_s^2-2\alpha(s)C_s^3\bigr]\dd s=0
    \qquad(0<r<t<q).
\]
Consequently,
\begin{equation}\label{eq:endpoint-integrand-ae}
    \E\bigl[D_s^2-2\alpha(s)C_s^3\bigr]=0
    \qquad\text{for a.e. }s\in(0,q).
\end{equation}
Choose \(s_n\uparrow q\) from the full-measure set on which
\eqref{eq:endpoint-integrand-ae} holds. Since
\(\mu(\{q\})=0\) and \(q\) is the maximal support point,
\[
    \alpha(s_n)\longrightarrow1.
\]
The diffusion has continuous paths, so \(X_{s_n}\to X_q\) almost
surely. The spatial derivatives \(u_{xx}\) and \(u_{xxx}\) are bounded
and continuous. Dominated convergence therefore gives
\begin{equation}\label{eq:endpoint-limit-identity}
    \E\bigl[D_q^2-2C_q^3\bigr]=0.
\end{equation}

For \(s\in[q,1]\), one has \(\alpha(s)=1\). Hence
\[
    u(q,x)=\log\cosh x+\frac{\beta^2}{2}(1-q).
\]
It follows that
\[
    C_q=\sech^2X_q,
    \qquad
    D_q=-2\tanh X_q\,\sech^2X_q.
\]
Substituting into \eqref{eq:endpoint-limit-identity} gives
\[
    4\E C_q^2-6\E C_q^3=0,
\]
and therefore
\begin{equation}\label{eq:endpoint-two-thirds}
    \frac{\E C_q^3}{\E C_q^2}=\frac23.
\end{equation}

Let \(p_q\) be the density of \(X_q\), and define
\[
    f_q(x)=p_q(x)\e^{-u(q,x)}.
\]
By \Cref{prop:dual-density-cone}, \(f_q\) is strictly positive, even,
and log-concave. It is also integrable, because
\(u(q,x)=\log\cosh x+\beta^2(1-q)/2\ge0\) and hence
\(0<f_q\le p_q\). In particular, evenness and log-concavity make it
nonincreasing on \([0,\infty)\). Since
\[
    p_q(x)=\e^{\beta^2(1-q)/2}f_q(x)\cosh x,
\]
one obtains
\begin{equation}\label{eq:endpoint-ratio-dual}
 \frac{\E C_q^3}{\E C_q^2}
 =
 \frac{\displaystyle\int_0^\infty
       f_q(x)\sech^5x\dd x}
      {\displaystyle\int_0^\infty
       f_q(x)\sech^3x\dd x}.
\end{equation}
Let \(\nu_0\) be the probability measure on \([0,\infty)\) with
density proportional to \(\sech^3x\). Then the right side of
\eqref{eq:endpoint-ratio-dual} is
\[
    \frac{\E_{\nu_0}[f_q(X)\sech^2X]}
         {\E_{\nu_0}f_q(X)}.
\]
Both \(f_q\) and \(\sech^2\) are nonincreasing. Moreover,
\(\sech^2\) is strictly decreasing, and the continuous function
\(f_q\) is not constant on the whole half-line because it is positive
and integrable. Thus there are two intervals of positive
\(\nu_0\)-measure on which the values of \(f_q\) are strictly
ordered, and hence
\begin{align*}
 2\operatorname{Cov}_{\nu_0}(f_q,\sech^2)
 ={}&\int_0^\infty\!\int_0^\infty
 [f_q(x)-f_q(y)]\\
 &\qquad\times[\sech^2x-\sech^2y]
 \nu_0(\dd x)\nu_0(\dd y)>0.
\end{align*}
Thus
\begin{align}
 \frac{\E C_q^3}{\E C_q^2}
 &>
 \E_{\nu_0}\sech^2X=
 \frac{\displaystyle\int_0^\infty\sech^5x\dd x}
      {\displaystyle\int_0^\infty\sech^3x\dd x}
 =\frac34.
 \label{eq:endpoint-three-quarters}
\end{align}
This contradicts \eqref{eq:endpoint-two-thirds}. Therefore
\(\mu(\{q\})>0\).
\end{proof}

We can now prove the main theorem.

\begin{proof}[Proof of \Cref{thm:main}]
By \Cref{prop:connected-support,prop:maximal-support-atom},
\[
 \supp\mu=[0,q_\beta],
 \qquad
 c_\beta:=\mu(\{q_\beta\})>0.
\]
By \Cref{lem:no-zero-atom}, $\mu(\{0\})=0$.

Apply \cite[Theorem~2(ii)]{AuffingerChenProperties} with
$(a,b)=(0,q_\beta)$. Its hypothesis holds because
$(0,q_\beta)\subset\supp\mu$, and it gives that the distribution
function
\[
 \alpha(s)=\mu([0,s])
\]
is infinitely differentiable on $[0,q_\beta)$, with derivatives at
zero understood from the right as in the cited theorem. Set
\[
 \rho_\beta(s)=\alpha'(s),
 \qquad 0\le s<q_\beta.
\]
Since $\alpha$ is nondecreasing, $\rho_\beta\ge0$. For
$0\le r<s<q_\beta$, the fundamental theorem of calculus and the
absence of an atom at zero give
\[
 \mu((r,s])=\alpha(s)-\alpha(r)
 =\int_r^s\rho_\beta(t)\dd t.
\]
Let
\[
 \lambda(A)=\int_A\rho_\beta(t)\dd t,
 \qquad A\subset[0,q_\beta)
\]
for Borel sets \(A\). The preceding identity shows that \(\mu\) and
\(\lambda\) agree on every half-open interval \((r,s]\) with
\(0\le r<s<q_\beta\). They also agree at zero, since
\[
 \mu(\{0\})=0=\lambda(\{0\}).
\]
These sets form a generating semiring for the Borel subsets of
\([0,q_\beta)\). By uniqueness of finite Borel measures,
\[
 \mu|_{[0,q_\beta)}
 =\rho_\beta(t)\dd t.
\]
Consequently, by monotone convergence,
\[
 \int_0^{q_\beta}\rho_\beta(t)\dd t
 =\mu([0,q_\beta))
 =1-c_\beta.
\]
Together with \(\mu(\{q_\beta\})=c_\beta\), this proves
\eqref{eq:main-decomposition}. Moreover,
$0\in\supp\mu$ and $q_\beta>0$, so
\[
 \mu([0,q_\beta/2))>0.
\]
Hence $1-c_\beta>0$, and therefore $c_\beta<1$.

It remains to identify the support of the continuous part. Let
$0\le x<q_\beta$, and let $J$ be any neighborhood of $x$. Choose
$y\in J\cap[0,q_\beta)$; when $x=0$, take $y>0$ sufficiently small.
Since $y\in\supp\mu$, there is an open interval
$J_0\subset J\cap[0,q_\beta)$ containing $y$ with
$\mu(J_0)>0$. The only mass outside the absolutely continuous part is
the atom at $q_\beta$, so
$\int_{J_0}\rho_\beta(t)\dd t>0$. Thus every $x<q_\beta$ belongs to
$\supp(\rho_\beta(t)\dd t)$. To treat the right endpoint, fix
$0<\delta<q_\beta$ and choose
$y\in(q_\beta-\delta,q_\beta)$. Because $y\in\supp\mu$, a sufficiently
small neighborhood of $y$ contained in
$(q_\beta-\delta,q_\beta)$ has positive
$\rho_\beta(t)\dd t$-mass. Hence $q_\beta$ belongs to the closure of
the support of the continuous part. This proves
\eqref{eq:main-conclusion}.

The representation \eqref{eq:main-decomposition} excludes interior
atoms and any singular continuous component, while
\Cref{lem:no-zero-atom,prop:maximal-support-atom} identify the endpoint
atom. This completes the proof.
\end{proof}

\bibliographystyle{plain}
\bibliography{frsb_final}
\end{document}